\newtheorem{theorem}{Theorem}
\theoremstyle{definition}
\newtheorem{definition}{Definition}
\newtheorem{proposition}{Proposition}
\newtheorem{lemma}{Lemma}
\newtheorem{remark}{Remark}
\newtheorem{hypothesis}{Hypothesis}
\newtheorem{corollary}{Corollary}
\newtheorem{conjecture}{Conjecture}
 \newcommand{\id}{\mathrm I}
 \newcommand{\mB}{\mathbf{B}}
 \newcommand{\mR}{\mathbf{R}}
 \newcommand{\mF}{\mathbf{F}}
 \newcommand{\mG}{\mathbf{G}}
 \newcommand{\mK}{\mathbf{K}}
 \newcommand{\mP}{\mathbf{P}}
 \newcommand{\R}{\mathbb{R}}
 \newcommand{\mM}{\mathbf{M}}
 \newcommand{\mT}{\mathbf{T}}
 \newcommand{\mQ}{\mathbf{Q}}
 \newcommand{\mU}{\mathbf{U}}
 \newcommand{\mW}{\mathbf{W}}
 \newcommand{\mA}{\mathbf{A}}
 \newcommand{\mfu}{\mathbf{u}}
 \newcommand{\mw}{\mathbf{w}}
 \newcommand{\mv}{\mathbf{v}}
 \newcommand{\sfv}{\mathsf{v}}
 \newcommand{\mr}{\mathbf{r}}
 \newcommand{\mh}{\mathbf{h}}
 \newcommand{\norm}[1]{\left \| #1 \right \|}
 \newcommand{\Lp}{\mathrm{L}}
 \newcommand{\Hp}{\mathrm{H}}
  \newcommand{\Wp}{\mathrm{W}^{1,1}}
 \newcommand{\rem}[1]{\todo[inline, color=gray!60!white]{\small \texttt{Remarque}: #1}}
\begin{document}
\begin{frontmatter}

%% Title, authors and addresses

%% use the tnoteref command within \title for footnotes;
%% use the tnotetext command for theassociated footnote;
%% use the fnref command within \author or \affiliation for footnotes;
%% use the fntext command for theassociated footnote;
%% use the corref command within \author for corresponding author footnotes;
%% use the cortext command for theassociated footnote;
%% use the ead command for the email address,
%% and the form \ead[url] for the home page:
%% \title{Title\tnoteref{label1}}
%% \tnotetext[label1]{}
%% \author{Name\corref{cor1}\fnref{label2}}
%% \ead{email address}
%% \ead[url]{home page}
%% \fntext[label2]{}
%% \cortext[cor1]{}
%% \affiliation{organization={},
%%             addressline={},
%%             city={},
%%             postcode={},
%%             state={},
%%             country={}}
%% \fntext[label3]{}

\title{Theoretical / numerical study of modulated traveling waves\\ in inhibition stabilized networks}

%% use optional labels to link authors explicitly to addresses:
 \author{Safaa Habib}
% \affiliation[label1, label2]{organization=Inria Center of University Côte d'Azur,
%%             addressline={},
%%             city={},
%%             postcode={},
%%             state={},
%%             country={}
%             }
%%

\author[1]{Romain Veltz} %% Author name

 \affiliation[1]{organization={Inria Center at Université Côte d'Azur, Cronos team},
%	addressline={},
%	city={},
%	postcode={},
%	state={},
	country=France
}
\cortext[1]{Corresponding author}

%% Abstract
\begin{abstract}
We prove a principle of linearized stability for traveling wave solutions to neural field equations posed on the real line. Additionally, we provide the existence of a finite dimensional invariant center manifold close to a traveling wave, this allows to study bifurcations of traveling waves. Finally, the spectral properties of the modulated traveling waves are investigated. Numerical schemes for the computation of modulated traveling waves are provided. We then apply these results and methods to study a neural field model in a inhibitory stabilized regime. We showcase Fold, Hopf and Bodgdanov-Takens bifurcations of traveling pulses. Additionally, we continue the modulated traveling pulses as function of the time scale ratio of the two neural populations and show numerical evidences for snaking of modulated traveling pulses.
\end{abstract}

%Graphical abstract
\begin{graphicalabstract}
	\begin{figure*}
\includegraphics{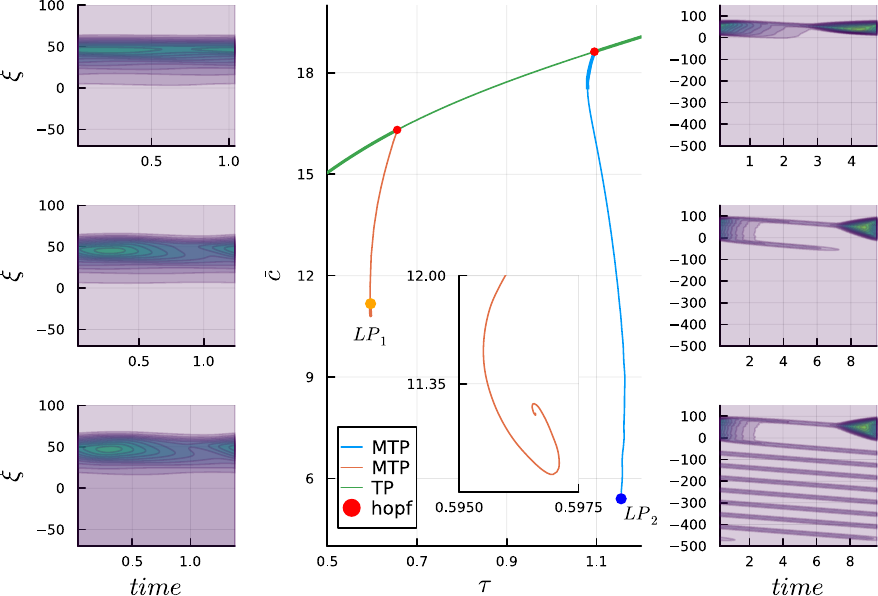}
\caption{Middle: curves of traveling pulses (TP) and modulated traveling pulses (MTP) for $\theta_i = 0.38$. Inset: zoom on the left MTP branch.  Left: example of MTPs on the left branch in the wave frame. Right: example of MTPs on the right branch in the wave frame. The MTP branches were computed with a single standard shooting \eqref{eq:SHODE}. For the branch of MTPs, we plot the value of the speed $\bar c$ as expressed in \eqref{eq:SHODE}. The stable states are marked with a thick line and the unstable ones with a thin line. The points $LP_1, LP_2$ marked with orange/blue dots represents limits points at which the continuation stopped.}
\end{figure*}
\end{graphicalabstract}

%Research highlights
\begin{highlights}
\item Principle of linearized stability for traveling wave solutions to neural field equations.
\item Bifurcation theory for traveling wave solutions.
\item Modulated traveling waves are orbitally linearly stables.
\item Numerical Methods for computing modulated traveling waves.
\item Numerical evidence for snaking of modulated traveling waves.
\end{highlights}

%% Keywords
\begin{keyword}
%% keywords here, in the form: keyword \sep keyword
neural fields\sep traveling waves\sep modulated traveling waves\sep bifurcation theory\sep shooting\sep DAE\sep freezing method

%% PACS codes here, in the form: \PACS code \sep code

%% MSC codes here, in the form: \MSC code \sep code
%% or \MSC[2008] code \sep code (2000 is the default)

\end{keyword}

\end{frontmatter}

%% Add \usepackage{lineno} before \begin{document} and uncomment 
%% following line to enable line numbers
%% \linenumbers

%% main text
%%

%% Use \section commands to start a section
\section{Introduction}\label{sec:intro}
Cortical waves have been the subject of active studies as reviewed in  \cite{muller_cortical_2018}. Some of the latest results concern the stimulus evoked activity in the visual cortex of awake monkeys \cite{muller_stimulus-evoked_2014}  where it was experimentally shown that this cortical activity is a traveling wave (see Figure~\ref{fig0}) thanks to a recent phase-based algorithm for wave detection. It was suggested in \cite{muller_cortical_2018} that the main mechanism for the existence of this traveling wave is a non zero - distance dependent conduction - delay.

On the other hand, the visual cortex in cats was shown in \cite{ozeki_inhibitory_2009} to be best modeled with an inhibition stabilized network (ISN) in which a strong recurrent inhibitory input onto the excitatory cells paradoxically shapes the dynamics \cite{tsodyks_paradoxical_1997}. The dynamics of traveling pulses and fronts in this type of network was very nicely studied in \cite{harris_traveling_2018} as function of the inhibition space / time scales. Notably, the authors found modulated traveling pulses by direct simulation. They also narrowed the analysis to the specific exponential connectivity kernels in order to perform numerical continuation of the waves by relying on the spatial dynamics trick \cite{sandstede_stability_2002}. 

Here, we theoretically and numerically investigate the traveling wave dynamics in ISNs in hope to elucidate key mechanisms in monkey visual cortex dynamics \cite{muller_stimulus-evoked_2014}. We thus do not impose the connectivity kernel function.
We do not study the existence of wave \cite{ermentrout_existence_1993, faye_traveling_2018, faye_propagation_2019} and simply assume that one is provided. At Hopf bifurcations in the wave frame, modulated traveling waves (MTW) emerge which are the main subject of this study. We also aim to study how the principle of linearized stability applies to TW and MTW in the context of neural fields.  This was for example conjectured in \cite{faye_traveling_2018} although this result is well known in the context of reaction-diffusion models \cite{sandstede_structure_2001, sandstede_stability_2002}. We based our analysis on the freezing method \cite{beyn_freezing_2004, rottmann-matthes_linear_2011}. 
When used numerically, this method allows us to simulate the long time asymptotic of waves without being limited by the simulation domain as in \cite{harris_traveling_2018}.

The (numerical) study of waves has a long history in the context of neural fields as reviewed in \cite{coombes_waves_2005,coombes_neural_2014, bressloff_waves_2014},  starting with analytical expressions \cite{amari_dynamics_1977} in which the authors assumed a Heaviside nonlinearity, and spanning the theory as well as the numerical analysis. The proof of existence of waves is a fascinating albeit very technical domain of research \cite{ermentrout_existence_1993} with some of the latest tools reviewed in \cite{faye_propagation_2019}.

The study \cite{rankin_continuation_2014} is among the first one to perform numerical continuation of stationary solutions (not waves) of neural fields equations based on a matrix-free approach, freeing the authors from the use of specific connectivity kernels and space discretization size. This was further extended in \cite{faugeras_spatial_2022} where the numerical bifurcation analysis was entirely performed on GPU. The computation of (stationary) waves using matrix-free methods has first been tackled in \cite{laing_numerical_2014} albeit without the preconditioner discovered in this paper which greatly speeds up the convergence of the newton algorithm.

The literature on the numerical computation of MTW is more scarce in the context of neural fields (but see \cite{wasylenko_bifurcations_2010}) and this work is probably the first study in this area. Their computation as zeros of functionals \cite{laing_numerical_2014}, and not by direct simulations has been performed, for other models, using a shooting method in \cite{wasylenko_bifurcations_2010,garcia_continuation_2016} or a trapezoidal scheme in \cite{uecker_numerical_2021,knobloch_origin_2021,lin_continuation_2018}. See also \cite{sherratt_numerical_2012} for a software dedicated to the computation of MTW in the case of PDEs. In the neural fields community, MTWs have been computed by direct simulation in \cite{blomquist_localized_2005,folias_breathing_2004,folias_traveling_2017,kilpatrick_pulse_2014} or by interface dynamics \cite{coombes_pulsating_2011} but so far, they have not been studied using numerical bifurcation tools. Compared to PDEs where sparse linear algebra can be used, the linear operators are dense which imposes the use of matrix free methods and iterative solvers. These methods are usually impeded by the slow convergence rate of the solvers but we will see that a simple preconditioner can remediate that.

The plan of the paper is as follows. 
In a first section~\ref{section:nf}, we present the neural fields model.
In section~\ref{sec:sumup}, we informally review the concepts associated with the dynamics of waves. 
In section~\ref{section:mainR}, we state our main results.
In section~\ref{section:nlstat}, we prove a principle of linearized stability of traveling wave. In section~\ref{section:cm}, we prove the existence of a center manifold which allows the use of normal form theory and bifurcation analysis. 
In section~\ref{section:mtp-nlstab}, we prove a principle of linearized stability for modulated traveling waves. 
In section~\ref{section:schemesmtp}, we present the different numerical schemes used to compute MTWs before giving a numerical application to a spatialized ISN in section~\ref{section:simus}. We then discuss our results in section~\ref{section:discussion}.

\section{The neural field model}\label{section:nf}
We study a two populations neural field model \cite{bressloff_spatiotemporal_2012, coombes_neural_2014, harris_traveling_2018, veltz_localglobal_2010} which describes the mean neural activity of excitatory and inhibitory populations. We focus on the case where  the neural populations are located on a one-dimensional cortex $\R$ with convolutional connectivity kernels
\begin{equation}\label{eq:nfe}
	\begin{aligned}
		\frac{d u}{d t}(x, t)=&-u(x, t)+S\left(a_{e e} K_{e} \cdot u(x, t)-a_{e i} K_{i}\cdot v(x, t)-\theta_{e}\right)\\
		\tau \frac{d v}{d t}(x, t)=&-v(x, t)+S\left(a_{ie} K_{e} \cdot u(x, t)-a_{i i} K_{i}\cdot v(x, t)-\theta_{i}\right)
	\end{aligned}
\end{equation}
where
\[
K_k\cdot u(x,t):=\int_\R K_k(x-y)u(y,t)dy,\ k\in\{e,i\},\ \quad S(I) := \frac{1}{1+\exp(-\beta I)}.
\]

The parameter $\tau$ describes the ratio of the  inhibitory time constant by the excitatory one. $S$ is a nonlinear function which represents the firing rate as function of the synaptic input and $\beta\geq 0$ is the nonlinear gain. The kernel mappings $K_k$ from $\mathbb R$ to $\mathbb R^+$ represent the spatial spread of the connections between the populations and are normalized to one:
\[
\int_\R K_k=1.
\]
The constants $a_{jk}\geq 0$ are the coupling strengths between the populations $k$ and $j$ and $\theta_p$ are the thresholds. 

We select the parameters so that the space clamped model (when the solutions are independent of $x$) has three equilibria $(u_i,v_i)$ for $i=1,2,3$ with $u_1<u_2<u_3$. When $(u_3,v_3)$ is stable, the network is in an inhibitory stabilized regime \cite{tsodyks_paradoxical_1997,harris_traveling_2018, veltz_periodic_2015}.

\begin{remark}
	It is convenient to rewrite \eqref{eq:nfe} in the more compact way
	\begin{equation}
		\label{eq:nfe-mK}
		\frac{d\mfu}{dt} = - \mathbf L_0\mfu +  \mathbf L_0S\left(\mW\cdot\mfu-\mathbf\theta \right)
	\end{equation}
	where $\mathbf L_0$ is the diagonal operator $\mathbf L_0:=diag(1,\tau^{-1})$, $\theta:=(\theta_e,\theta_i)$ and $\mW_{ij} := a_{ij}K_j$.
\end{remark}

\section{Informal introduction to fronts, pulses and waves}\label{sec:sumup}
We regroup here the different formulations and results concerning the traveling wave solutions that we shall discuss in this paper. For convenience, we rewrite \eqref{eq:nfe} as a Cauchy problem
\begin{equation}\label{eq:cauchy}
	\frac{d\mfu}{dt} = \mathbf F(\mfu;p)
\end{equation}
where $p\in\R^m$ are the parameters. 

\begin{remark}
	It is enlightening to view \eqref{eq:cauchy} as an equivariant Cauchy problem \cite{chossat_methods_2000} with symmetry group composed of the translations of the real line. A traveling wave is thus a relative equilibrium and a modulated traveling wave (see below) is a relative periodic orbit. See \cite{chossat_methods_2000} for more information on relative equilibria / periodic orbits.
\end{remark}

We are interested in a specific type of solutions $(\bar\mv,\bar c)$ called \textit{traveling wave} (TW) with speed $\bar c$ and profile $\bar\mv$, where $\mfu(x,t) = \bar\mv(x -\bar ct)$ and where the limits $\lim\limits_{\xi\to\pm\infty}\bar\mv(\xi)$ are one of the $(u_i,v_i)$. Depending on the limits, we shall call the TW differently. If these two limits are the same $(u_i,v_i)$ for some $i\in\{1,2,3\}$, \textit{i.e.} connecting one of the space clamped stationary states to the same one, we call such solution a \textit{traveling pulse} (TP), see figure~\ref{fig0}~top for an example. A \textit{traveling front} (TF) is a TW where the two limits are different.

\begin{figure}[htbp!]
	\centering
	\includegraphics[width=0.9\textwidth]{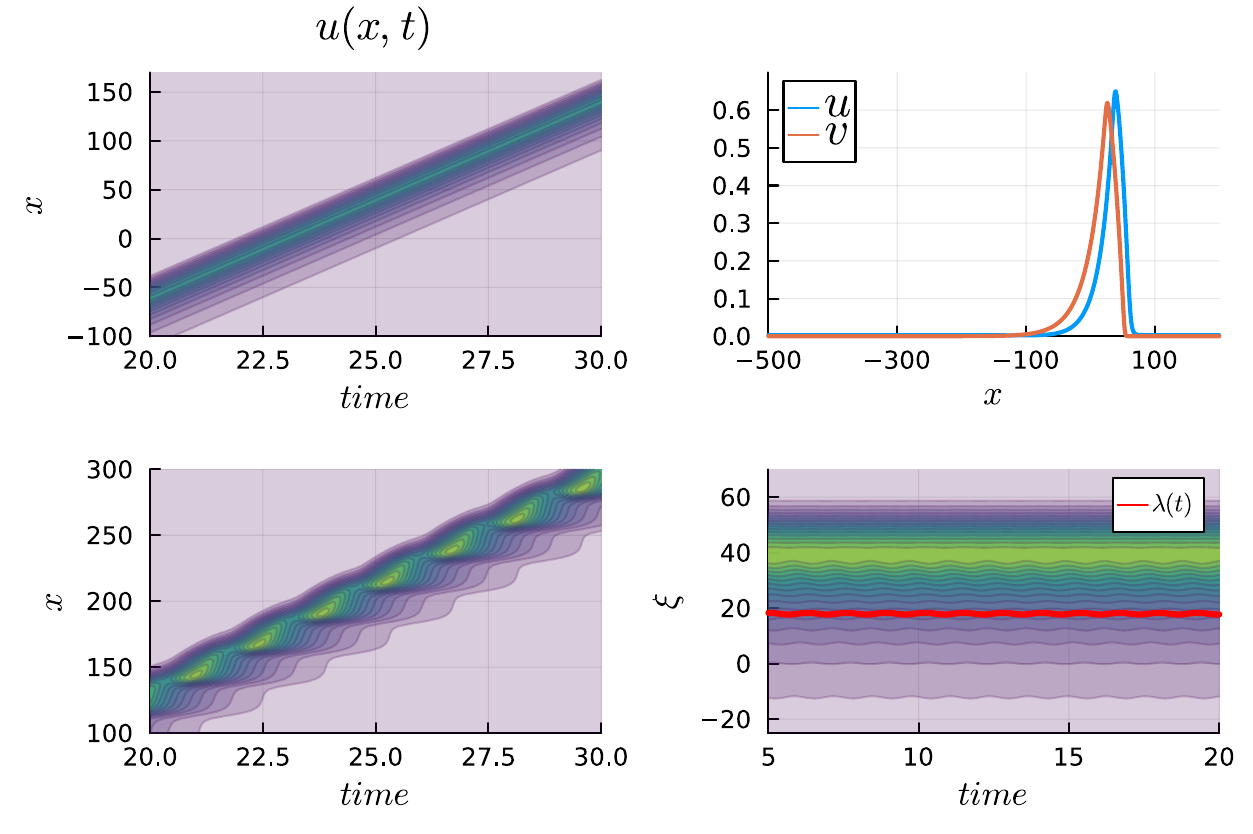}
	\caption{Top left: spatio-temporal plot of a TP. Top right: profile of the TP. Bottom left: spatio-temporal plot of a MTP. Bottom right: MTP in wave frame, solution to \eqref{eq:pddae}. Parameters for top row $\theta_i = 0.3869, \tau = 1.3$. Parameters for bottom row $\theta_i = 0.3869, \tau = 0.82$. The other parameters can be found in section~\ref{section:simus}.}
	\label{fig0}
\end{figure}

\begin{remark}
	The fact that the network is in an ISN regime is crucial for the definition of the different TWs. We would not necessarily find them in a different dynamical regime.
\end{remark}

\begin{remark}
	Note that the numerical methods of section~\ref{sec:nm} work the same for \textit{traveling fronts} (see \cite{faye_traveling_2018}). We will not numerically study them thoroughly in this work.
\end{remark}

It is straightforward to derive an equation satisfied by the TW but it is more convenient to  introduce the so-called \textit{freezing method} \cite{beyn_freezing_2004, rottmann-matthes_stability_2012, beyn_stability_2014} by expressing \eqref{eq:cauchy} in a wave frame $\xi=x-\gamma(t)$ with $\gamma(0)=0$. The wave ansatz $\mfu(x,t) = \mv(x-\gamma(t), t)$ for some phase $\gamma$ and profile $\mv$, yields the equations where we write $\partial:=\partial_\xi$:

\begin{equation*}
	\left\{
	\begin{array}{ccl}
		\frac{d\mv}{dt} &=& \lambda\partial\mv + \mF(\mv;p),\\
		\frac{d\gamma}{dt} &=&\lambda.
	\end{array}\right.
\end{equation*}
These equations are easy to get, they stem from the time derivative of $\mv(x-\gamma(t), t)$.
However, the mapping $\gamma$ is still arbitrary. We fix this degree of freedom by using a phase-condition. For a given reference profile $\hat\mfu$, we look for the profile $\mv$ that minimize $J(\gamma):=\|\hat\mfu(\cdot-\gamma)-\mv \|^2_{\Lp^2}$. This leads to the phase condition: 
\[
0 =  \langle\mv-\hat \mfu, \partial\hat \mfu \rangle_{L^2}.
\]
All in all, we arrive at the differential algebraic equation (DAE):
\begin{equation}\label{eq:pddae}
	\left\{
	\begin{array}{ccl}
		\frac{d\mv}{dt} &=& \lambda\partial\mv + \mF(\mv;p),\\
		\frac{d\gamma}{dt} &=&\lambda,\\
		0 &=&  \langle\mv-\hat \mfu, \partial\hat \mfu \rangle_{L^2}.
	\end{array}\right.
\end{equation}
We observe that the second equation decouples from the other two. 
Given a solution $\mfu$ of \eqref{eq:cauchy}, we can recover the phase $\gamma$ by substituting $\mfu(x + \gamma(t), t)$ in the phase condition and differentiating \textit{w.r.t.} time:
\[
\dot\gamma=g(t,\gamma(t)), \quad\quad g(t,\gamma) := \frac{ \langle\mF(\mfu(t);p), \partial\hat \mfu(\cdot+\gamma) \rangle_{L^2}}{ \langle\mfu(t), \partial\hat \mfu(\cdot+\gamma) \rangle_{L^2}}.
\]
Thus, the only freedom in finding those waves is to fix a reference profile $\hat\mfu$ such that \eqref{eq:pddae} is well posed.

\begin{remark}
	It can be interesting, especially for numerical bifurcation analysis, to view the previous equation \eqref{eq:pddae} as a DAE with mass matrix $\mM = diag(\id_2,0)$
	\begin{equation*}
		\mM\frac{d}{dt} 
		\left[\begin{matrix}
			\mv\\ 
			\lambda
		\end{matrix}\right]
		= 
		\left[\begin{matrix}
			\lambda\partial\mv + \mF(\mv;p)\\
			\langle\mv-\hat \mfu, \partial\hat \mfu\rangle_{L^2}
		\end{matrix}\right].
	\end{equation*}
\end{remark}
\noindent A TW with speed $\bar c$ is thus a stationary point $(\bar\mv,\bar c)$ of \eqref{eq:pddae} with appropriate boundary conditions:
\begin{equation}\label{eq:wave}
	\left\{
	\begin{array}{ccl}
		0 &=& \bar c\partial\bar\mv + \mathbf F(\bar\mv;p)\\
		0 &=&  \langle\bar\mv-\hat \mfu, \partial\hat \mfu \rangle_{L^2}.
	\end{array}\right.
\end{equation}
Finally, we are interested in \textit{modulated traveling waves}\footnote{called MTP for pulses. We require that the limits at $\xi=\pm\infty$ are equal at all time} MTW $(\tilde\mfu,\bar c,T)$ which are $T$-periodic solutions of \eqref{eq:cauchy} satisfying \[\forall t\geq0,\forall x\in\R,\ \tilde\mfu(x,t+T)=\tilde\mfu(x-\bar cT,t)\] or equivalently $\tilde\mv(\xi, t+T)= \tilde\mv(\xi,t)$ in the wave frame $\xi=x-\bar ct$. MTW can thus be seen as $T$-periodic solutions $\tilde\mv$ of 
\begin{equation}\label{eq:mtw_ode}
	\frac{d\mv}{dt} =\bar c\partial\mv + \mathbf F(\mv;p).
\end{equation}
Similarly, a $T$-periodic solution $(\tilde\mw,\tilde\lambda)$ of  \eqref{eq:pddae} also yields a MTW.
Indeed, by definition, we have
\[
\mfu(x,t+T)  = \tilde\mw(x-\tilde\gamma(t+T),t+T)  = \tilde\mw(x-\tilde\gamma(t)-\delta\gamma,t)=\mfu(x+\delta\gamma,t)
\]
where $\delta\gamma\stackrel{see\ \eqref{eq:pddae}}{:=}\int_0^T\tilde\lambda=\bar c T$ defines $\bar c$ as mean value of $\tilde\lambda$. The numerical advantage of the formulation \eqref{eq:pddae} is that the wave speed $\bar c$ is a byproduct but we need to solve a DAE whereas when solving  \eqref{eq:mtw_ode}, the speed $\bar c$ must be known accurately.

\section{Notations and main results}\label{section:mainR}
In the previous section, we described the type of waves that we want to study. Our work is not focused on the very delicate question of existence \cite{ermentrout_existence_1993} of such solutions (some of the most recent methods are nicely presented in \cite{faye_propagation_2019}) except some MTW which can be predicted from a Hopf bifurcation of TW (see theorem~\ref{thm:CM} below). We are interested in the dynamical stability of these solutions. Some of these stability results are very well known \cite{sandstede_stability_2002} albeit mostly in the diffusive case (but see \cite{rottmann-matthes_stability_2012}).

\subsection{Notations}
We start by introducing some notations. 
We denote by $\mathcal L(X)$ the set of continuous linear operators on a banach space $X$.
Let $\ker(\mA)$ be the kernel of a linear operator $\mA$, $\mathcal R(\mA)$ its range and $\mA^*$ its adjoint. 
The spectrum of $\mA$ is denoted by $\Sigma(\mA)$ and the set of eigenvalues is denoted by $\Sigma_p(\mA)$.
$R(z,\mA):=(z\id-\mA)^{-1}$ denotes the resolvent of $\mA$ for $z\in\mathbb C\setminus\Sigma(\mA)$.
Let $\Lp^q :=\mathrm L^q(\R,\mathbb R^2)$ be the space of functions with two components in $\Lp^q(\R,\R)$, $\Hp^m :=\mathrm H^m(\R,\mathbb R^2) := W^{2,m}(\R,\mathbb R^2)$ be the Sobolev spaces \cite{brezis_functional_2010}. We also write $\mathcal M_2(\R)$ the set of real 2 by 2 matrices.
For a function $\mathbf F(\mfu; p)$, we denote by $d\mathbf F(\mfu; p )$ the Frechet differential at $(\mfu, p)$ \textit{w.r.t.} the first variable.

When discussing a solution of \eqref{eq:cauchy}, we write $\mfu(t) := \mfu(\cdot, t)$. Finally, we denote by $(\rho(\gamma)\cdot\mfu)(x):=\mfu(x-\gamma)$ for some $\gamma\in\R$ the (space) translation of $\mfu$.

\subsection{Stability of traveling waves}
\begin{definition}[Orbital stability \cite{thummler_numerical_2005}]\label{def:stabletw}
	A traveling wave $(\bar\mv,\bar c)$ is exponentially and asymptotically orbitally stable if there are $M\geq0, \alpha>0$ such that for all $\epsilon>0$ sufficiently small, there is $\delta>0$ such that for each solution $\mfu$ of \eqref{eq:cauchy} with $\norm{\mfu(0)-\bar\mv(\cdot+\gamma_0)}\leq \delta$ for some $\gamma_0\in\mathbb R$, there exists a phase shift $\gamma\in\R$ such that
	\[
	\forall t\geq0,\ \norm{\mfu(\cdot,t) - \bar\mv(\cdot-\bar ct+\gamma)}\leq \epsilon.
	\]
	Additionally,
	\[
	\norm{\mfu(\cdot,t) - \bar\mv(\cdot-\bar ct+\gamma)}\leq Me^{-\alpha t}.
	\]
\end{definition}
The orbital stability of $(\bar\mv,\bar c)$ is determined (see \cite{sandstede_stability_2002} for reaction-diffusion equations) by the spectrum of 
\begin{equation}\label{eq:oplinstab}
	\mA:=\bar c\partial + d \mathbf F(\bar\mv;p).
\end{equation}
The zero eigenvalue is always present because $\partial\bar\mv$ in the kernel of $\mA$ as a consequence of the translation symmetry. If $0$ is a simple eigenvalue and the rest of spectrum lies in $\{z\in\mathbb C\ : \ \Re(z)\leq w\}$ for some $w<0$, then the TW is (orbitally) stable. This is an example of the \textit{principle of linearized stability}. We shall see that this holds for the neural field equations \eqref{eq:nfe}. Note that this stability result was conjectured in \cite{faye_traveling_2018} in a slightly different setting.

We now collect the different hypotheses concerning the components of \eqref{eq:nfe-mK}.

\begin{hypothesis}\label{hyp:S}
	$S\in C^{r+1}(\R,[0,1])$ with $r\geq 2$ is increasing with all derivatives bounded. 
\end{hypothesis}
\begin{hypothesis}\label{hyp:barv}
	We assume that
	\begin{enumerate}
		\item $\bar c >0$ (\textit{w.l.o.g.} using the reflection symmetry),
		\item $\partial\bar\mv\in\Lp^2$,
		\item $\hat\mfu \in H^2$,
		\item $ \langle\partial\hat\mfu,\partial\bar\mv \rangle_2\neq 0$.
	\end{enumerate}
\end{hypothesis}
Additionally, because of the reflection symmetry $\kappa\cdot\mfu(x) := \mfu(-x)$ of equation \eqref{eq:nfe} for even spatial connectivity kernels $K_p$, if $(\bar\mv,\bar c)$ is a solution, then so is $(\kappa\cdot\bar\mv,-\bar c)$ which makes Hypothesis~\ref{hyp:barv}.1 less restrictive.

\begin{remark}
	Choosing $\hat\mfu$ in $\ker(\mA^*)$ so that $\langle \hat\mfu, \partial\bar\mv\rangle=1$ would simplify the analysis but in practical applications,  $\bar\mv$ is not known beforehand. 
\end{remark}

\begin{hypothesis}\label{hyp:W}
	$K_k\in \mathrm W^{1,1}(\R, \R^+)$ for $k\in\{e,i\}$.
\end{hypothesis}

\begin{theorem}\label{th:linstab}
	Let us consider a traveling wave $(\bar\mv,\bar c)$.
	Grant \ref{hyp:S}, \ref{hyp:barv} and \ref{hyp:W}. 
	Assume that $\ker\mA = span\{\partial\bar\mv\}$. 
	Further assume that the non-zero eigenvalues  of $\mA$ are in the set $\{\Re<w\}\subset\mathbb C$ for some $w<0$.
	Then, the traveling wave is  exponentially and asymptotically orbitally stable in $\Lp^2$.
\end{theorem}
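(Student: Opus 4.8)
\emph{Strategy of the proof.} The plan rests on the structural observation that $\mA=(\bar c\,\partial-\mathbf L_0)+\mathcal K$, where $\mathcal K\mw:=\mathbf L_0\,\mathrm{diag}\!\big(S'(\mW\cdot\bar\mv-\theta)\big)(\mW\cdot\mw)$ is a \emph{bounded}, nonlocal but order–zero operator on $\Lp^2$ (Young's inequality with $K_k\in\Lp^1$, using Hypotheses~\ref{hyp:S} and \ref{hyp:W}), in which no derivative of $\mw$ appears. Since $\bar c\,\partial-\mathbf L_0$ generates the explicit $C_0$–semigroup $(T_0(t)\mfu)(\xi)=e^{-t\mathbf L_0}\mfu(\xi+\bar ct)$ with $\norm{T_0(t)}_{\mathcal L(\Lp^2)}=e^{-\nu t}$, $\nu:=\min(1,\tau^{-1})>0$, the bounded perturbation $\mathcal K$ makes $\mA$ generate a $C_0$–semigroup $T(t)=e^{t\mA}$ on $\Lp^2$. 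The first task is to locate $\Sigma(\mA)$. Because $\bar\mv(\xi)$ tends to a homogeneous state as $\xi\to\pm\infty$ (for a front, to two of them), split $\mathcal K=\mathcal K_\infty+\mathcal K_c$ into its constant–coefficient limit and a remainder whose coefficient decays to $0$; a tightness argument — $\mW\cdot$ maps $\Lp^2$ into $\Hp^1$ by Hypothesis~\ref{hyp:W}, followed by multiplication by a vanishing function — shows $\mathcal K_c$ is compact, so the essential spectrum satisfies $\Sigma_{\mathrm{ess}}(\mA)=\Sigma_{\mathrm{ess}}(\bar c\,\partial-\mathbf L_0+\mathcal K_\infty)$. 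The latter operator is translation invariant; by the Fourier transform its spectrum is $\overline{\bigcup_{k\in\R}\Sigma\big(i\bar ck\,\id-\mathbf L_0+\mathbf L_0 D_\infty\widehat{\mW}(k)\big)}$, which is exactly the (wave–frame) spectrum of the homogeneous limit state as an equilibrium of~\eqref{eq:nfe}: at $k=0$ it is the Jacobian of the space–clamped dynamics at that state, stable in the inhibition–stabilized regime, and for $|k|$ large $\widehat{\mW}(k)\to0$ by Riemann–Lebesgue so the real parts tend to $\{-1,-\tau^{-1}\}$. Hence $\Sigma_{\mathrm{ess}}(\mA)\subset\{\Re\le-w_0\}$ for some $w_0>0$, and together with $\ker\mA=\mathrm{span}\{\partial\bar\mv\}$, the standard non–degeneracy $\langle\psi,\partial\bar\mv\rangle\ne0$ (for $\psi$ spanning $\ker\mA^*$) and the hypothesis on the nonzero eigenvalues, this gives: $0$ is an isolated, algebraically simple eigenvalue of $\mA$, and $\Sigma(\mA)\setminus\{0\}\subset\{\Re\le-\beta_0\}$ with $\beta_0:=\min(w_0,-w)>0$.

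To promote this spectral gap to decay, let $P$ be the spectral projection onto $\mathcal R(P)=\mathrm{span}\{\partial\bar\mv\}$; it commutes with $T(t)$ and $T(t)P=P$. Comparing $T$ with the Fourier–multiplier semigroup $T_\infty$ generated by $\bar c\,\partial-\mathbf L_0+\mathcal K_\infty$, the Duhamel identity $T(t)-T_\infty(t)=\int_0^t T_\infty(t-s)\,\mathcal K_c\,T(s)\,ds$ displays the difference as a norm limit of compact operators, hence compact for every $t\ge0$; consequently the essential growth bound of $T$ equals that of $T_\infty$, namely $-w_0$. Since the growth bound of a $C_0$–semigroup is the maximum of its essential growth bound and the real parts of the isolated eigenvalues of its generator, restricting to $Y:=\ker P$ (which removes the eigenvalue $0$) yields $\omega_0\big(T(\cdot)|_Y\big)\le-\beta_0$, i.e. $\norm{T(t)(\id-P)}_{\mathcal L(\Lp^2)}\le Me^{-\alpha t}$ for any $\alpha<\beta_0$ and a suitable $M\ge1$; fix such $\alpha,M$ for the statement.

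For the nonlinear conclusion, by translation equivariance assume $\gamma_0=0$ and seek $\mfu(\cdot,t)=\rho\big(\bar ct+\gamma(t)\big)\big(\bar\mv+\mw(t)\big)$, with $\gamma$ fixed by the \emph{spectral} phase condition $P\mw(t)=0$, equivalently $\langle\mw(t),\psi\rangle=0$, $\langle\psi,\partial\bar\mv\rangle=1$ (solvability of the initial phase and of the constraint along the flow uses the transversality $P\partial\bar\mv=\partial\bar\mv\ne0$, and $\psi\in\Hp^1$ by a bootstrap on $\mA^*\psi=0$ using Hypotheses~\ref{hyp:barv}--\ref{hyp:W}). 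Equivariance and $\mF(\bar\mv;p)=-\bar c\,\partial\bar\mv$ turn~\eqref{eq:cauchy} into $\dot\mw=\mA\mw+\mu(\mw)\,\partial(\bar\mv+\mw)+\mathcal N(\mw)$, $\dot\gamma=\mu(\mw)$, with $\mathcal N(\mw):=\mF(\bar\mv+\mw;p)-\mF(\bar\mv;p)-d\mF(\bar\mv;p)\mw$ obeying $\norm{\mathcal N(\mw)}\le C\norm{\mw}^2$ (Taylor with bounded $S''$, Young); differentiating the constraint and using $\langle\psi,\mA\mw\rangle=\langle\mA^*\psi,\mw\rangle=0$ gives $\mu(\mw)=-\langle\psi,\mathcal N(\mw)\rangle/\langle\psi,\partial(\bar\mv+\mw)\rangle$, hence the key estimate $|\mu(\mw)|\le C\norm{\mw}^2$ for $\norm{\mw}$ small. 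The only term not bounded in $\Lp^2$ is $\mu(\mw)\,\partial\mw$, which I remove by passing to the drift–corrected frame $\mathbf z(t):=\rho(m(t))\mw(t)$, $m(t):=\int_0^t\mu(\mw(s))\,ds$: there the equation becomes $\dot{\mathbf z}=\mA_{m(t)}\mathbf z+\mu(\mw(t))\,\partial\big(\rho(m(t))\bar\mv\big)+\mathcal N_{m(t)}(\mathbf z)$ with $\mA_m:=\rho(m)\mA\rho(-m)$ the linearization at the translated wave, $\mathbf z(t)\in\ker P^{(m(t))}$ ($P^{(m)}:=\rho(m)P\rho(-m)$), and forcing purely quadratic in $\Lp^2$. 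Uniform continuity of $S'(\mW\cdot\bar\mv-\theta)$ gives $\norm{\mA_m-\mA}\to0$ as $m\to0$, so on the bootstrapped set where $\sup_{s\le t}e^{\alpha s}\norm{\mathbf z(s)}$ is small one has $m(\cdot)=O(\delta^2)$ and $t\mapsto\mA_{m(t)}$ a small bounded perturbation of $\mA$; robustness of the exponential dichotomy of $\mA$ (a one–dimensional neutral direction near $\mathrm{span}\{\partial\bar\mv\}$, an exponentially stable complement near $Y$), together with the constraint $\mathbf z(t)\in\ker P^{(m(t))}$ used to bound the neutral component by the stable one, closes a Gronwall estimate $\norm{\mathbf z(t)}\le Me^{-\alpha t}\norm{\mathbf z(0)}$ for small data. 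Translating back, $\norm{\mw(t)}\le Me^{-\alpha t}\norm{\mw(0)}$, while $|\dot\gamma|=|\mu(\mw)|\le CM^2e^{-2\alpha t}$ is integrable, so $\gamma(t)\to\gamma_\infty$ exponentially; since $\norm{\rho(h)\bar\mv-\bar\mv}_{\Lp^2}\le|h|\,\norm{\partial\bar\mv}_{\Lp^2}$ (Hypothesis~\ref{hyp:barv}.2), this yields the asymptotic phase $\gamma:=-\gamma_\infty$ and the bound $\norm{\mfu(\cdot,t)-\bar\mv(\cdot-\bar ct+\gamma)}\le M'e^{-\alpha t}$ of Definition~\ref{def:stabletw}.

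The decisive obstacle is this last step, and inside it the hyperbolic transport term $\mu(\mw)\,\partial\mw$: there is no parabolic smoothing to absorb a lost derivative and $T(t)$ gains no regularity, so one must re–route through the drift–corrected frame and then control a \emph{time–dependent} linear operator and a \emph{time–dependent} spectral projection — precisely the technical heart of the freezing method, which I would adapt from \cite{rottmann-matthes_linear_2011,rottmann-matthes_stability_2012}. It is somewhat lighter in the neural field setting than for genuine first–order hyperbolic PDEs, since $\mathcal K$ is bounded (nonlocal, order zero) rather than a multiplication, which keeps every estimate at the level of $\Lp^2$. The remaining technical point is the compactness of $\mathcal K_c$ on the whole line, which rests on the $\Wp$–regularity of the kernels in Hypothesis~\ref{hyp:W} (Riemann–Lebesgue plus tightness).
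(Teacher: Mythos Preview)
Your proposal is essentially correct and reaches the same conclusion, but it diverges from the paper's proof in both of its two phases.

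\textbf{Linear part.} The paper does not compute $\Sigma_{\mathrm{ess}}(\mA)$ via a Fourier analysis of the asymptotic constant--coefficient operator. Instead it shows directly that the semigroup $(\mT(t))_{t\ge0}$ generated by $\mA$ is \emph{quasi--compact}: starting from the explicit contraction semigroup of $\bar c\,\partial-\mathbf L_0$, a cutoff $\chi_n$ splits $\tilde\mW=\chi_n\tilde\mW+(1-\chi_n)\tilde\mW$ into a compact piece and a remainder, and then the abstract machinery of \cite{engel_one-parameter_2000} (Prop.~V.3.5--V.3.7) gives the spectral picture in one stroke. This is more economical and, in particular, does not invoke stability of the asymptotic rest state: your Fourier computation implicitly requires the space--clamped Jacobian at the limit to have spectrum in $\{\Re<0\}$, which is true in the ISN regime considered here but is not among the hypotheses of the theorem. (You also tacitly add algebraic simplicity of $0$ via $\langle\psi,\partial\bar\mv\rangle\ne0$; the paper uses it as well but leaves it implicit.)

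\textbf{Nonlinear part.} This is the more substantial difference. You attack the quasilinear term $\mu(\mw)\,\partial\mw$ by a \emph{spatial} drift correction $\mathbf z(t)=\rho(m(t))\mw(t)$, then close a Gronwall argument via robustness of the exponential dichotomy for the time--dependent operator $\mA_{m(t)}$, in the spirit of \cite{rottmann-matthes_linear_2011,rottmann-matthes_stability_2012}. The paper instead removes the derivative loss by a \emph{time change}: dividing the projected equation by $\bar c+\mu(\sfv)$ converts the quasilinear problem $\dot\sfv=\mP\big((\bar c+\mu(\sfv))\partial\sfv+\tilde\mF(\sfv)\big)$ into the semilinear one $\dot\sfv=\mP\big(\partial\sfv+(\bar c+\mu(\sfv))^{-1}\tilde\mF(\sfv)\big)$, and then quotes the off--the--shelf principle of linearized stability from \cite{grabosch_cauchy_1990}. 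This is considerably shorter and avoids the time--dependent projections entirely; orbital stability then follows because $\int_0^t\mu(\sfv(s))\,ds$ converges. Your route is heavier but perfectly sound, and has the merit of staying closer to the general freezing--method template; the paper's time--rescaling trick is the lighter tool available here precisely because the nonlocal part $\tilde\mW$ is bounded on $\Lp^2$.
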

This results implies that the norm in definition~\ref{def:stabletw} is the $\Lp^2$ one. In particular, for traveling pulses (which do not belong to $\Lp^2$), we only allow for neighborhoods in $\Lp^2$.

The theorem is especially useful in that we only have to characterize the eigenvalues of $\mA$ close to the imaginary axis and not the full spectrum.

\begin{remark}
	We prove stability of the TW for the DAE formulation \eqref{eq:pddae} in theorem~\ref{thm:pstb-dae} which implies theorem~\ref{th:linstab}.
\end{remark}

\subsection{Bifurcations of traveling waves}

The goal of this section is to provide a bifurcation theory for TW. To this end, we prove the existence of an (invariant) center manifold of finite dimension for \eqref{eq:pddae}. Then, the study of the dynamics on this invariant manifold through the use of normal form theory \cite{iooss_bifurcation_1979} gives access to a bifurcation theory for TWs.

We introduce the projector $\mP$ on $\Lp^2$ onto $\partial\hat\mfu^\perp$ along $\partial\bar\mv$
\begin{equation}
	\mP\cdot\sfv := \sfv-\frac{  \langle\sfv, \partial\hat \mfu \rangle_{L^2}}{\langle\partial \bar\mv, \partial\hat \mfu \rangle_{L^2}}\partial\bar\mv
\end{equation}
which is well defined under Hypothesis~\ref{hyp:barv}.

\begin{theorem}\label{thm:CM}
	Let us consider a traveling wave $(\bar\mv,\bar c)$.
	Grant \ref{hyp:S}, \ref{hyp:barv} and \ref{hyp:W}. 
	Assume that $\sigma_c:=\Sigma(\mP\mA)\cap i\R\neq\emptyset$ and denote by
	$\mathbb E_c$ the sum of the generalized eigenspaces of $\mP\mA$ for the zero real part eigenvalues $\sigma_c$.
	Define $\mathcal Z:=\mP\Hp^1$.
	Then, there exists a map $\Psi\in C^r(\mathbb E_c\times\R^m, \mathcal Z)$ with $r\geq 2$ such that
	\[
	\Psi(0;0)=0,\ d\Psi(0;0)=0
	\]
	and a neighborhood $\mathcal O_\mfu\times\mathcal O_p$ of $(0,0)$ in $\mathcal Z\times \R^m$ such that for $p\in\mathcal O_p$, the manifold
	\[
	\mathcal{M}(p)=\left\{\mv_0+\Psi\left(\mv_0; p\right) ; \mv_0 \in \mathbb E_c\right\}\subset\mathcal Z
	\]
	has the following properties:
	\begin{itemize}
		\item $\mathcal{M}(p)$ is locally invariant, i.e., if $\mv$ is a solution to \eqref{eq:ioossform} satisfying $\mv(0)\in\mathcal M(p)\cap\mathcal O_\mfu$ and $\mv(t)\in\mathcal O_\mfu$ for all $t\in[0,T]$, then $\mv(t)\in\mathcal M(p)$ for all $t\in[0,T]$.
		\item $\mathcal{M}(p)$ contains the set of bounded solutions of \eqref{eq:ioossform} staying in $\mathcal O_\mfu$ for all $t\in\R$, \textit{i.e.}, if $\mv$ is a solution to \eqref{eq:ioossform} satisfying $\mv(t)\in\mathcal O_\mfu$ for all $t\in\R$, then $\mv(0)\in\mathcal{M}(p)$.
	\end{itemize}
\end{theorem}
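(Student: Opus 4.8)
The plan is to cast \eqref{eq:pddae} as an abstract semilinear evolution equation on the phase space $\mathcal Z=\mathbf P\mathrm H^1$ and invoke a standard parametrized center manifold theorem (e.g.\ in the style of Vanneste, Haragus–Iooss, or Iooss' formulation). The first step is to eliminate the phase variable $\gamma$: since the $\dot\gamma=\lambda$ equation decouples (as noted after \eqref{eq:pddae}), the dynamics of $(\mathbf v,\lambda)$ is governed by the first and third equations alone. One then solves the algebraic constraint $\langle \mathbf v-\hat{\mathbf u},\partial\hat{\mathbf u}\rangle_{\mathrm L^2}=0$ together with $0=\langle \lambda\partial\mathbf v+\mathbf F(\mathbf v;p),\partial\hat{\mathbf u}\rangle$ (obtained by differentiating the constraint in time) for $\lambda$ as a function of $\mathbf v$; this is exactly where Hypothesis~\ref{hyp:barv}.4 is used, since it guarantees $\langle\partial\mathbf v,\partial\hat{\mathbf u}\rangle_{\mathrm L^2}\neq0$ in a neighborhood of $\bar{\mathbf v}$, making $\lambda=\lambda(\mathbf v;p)$ a well-defined $C^r$ map. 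Substituting back yields a closed equation on $\mathcal Z$, which I expect to be written in the paper as \eqref{eq:ioossform}, of the form $\frac{d\mathbf v}{dt}=\mathbf P\mathbf A(\mathbf v-\bar{\mathbf v})+\mathcal N(\mathbf v;p)$ after recentering at the traveling wave, with $\mathbf P\mathbf A$ the linear part and $\mathcal N$ a $C^r$ nonlinearity vanishing to first order at $(\bar{\mathbf v};0)$.

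Next I would verify the hypotheses of the center manifold theorem for the linear operator $\mathbf P\mathbf A$ on $\mathcal Z$. This requires: (i) a \emph{spectral decomposition} $\Sigma(\mathbf P\mathbf A)=\sigma_c\cup\sigma_s$ with $\sigma_c=\Sigma(\mathbf P\mathbf A)\cap i\R$ finite and $\sigma_s\subset\{\Re z<0\}$ bounded away from the imaginary axis; and (ii) appropriate \emph{resolvent estimates} on $\sigma_h=\{|\Re z|\ge\omega_0\}$ ensuring the generator of an analytic or at least a strongly continuous semigroup with the right growth bounds (this is the standard hyperbolicity-type condition). The finiteness of $\sigma_c$ and the spectral gap should follow from the structure $\mathbf A=\bar c\partial+d\mathbf F(\bar{\mathbf v};p)$: the operator $\bar c\partial$ generates the translation group (spectrum on $i\mathbb R$), while $d\mathbf F(\bar{\mathbf v};p)=-\mathbf L_0+\mathbf L_0\,DS(\cdot)\,\mathbf W\cdot$ is a relatively compact perturbation because the convolution kernels $K_k\in\mathrm W^{1,1}$ (Hypothesis~\ref{hyp:W}) smooth and the asymptotic constancy of $\bar{\mathbf v}$ at $\pm\infty$ makes the variable coefficients asymptotically constant — so a Weyl-type essential spectrum argument confines the essential spectrum to a region $\{\Re z\le -\delta\}$ (here the diagonal $-\mathbf L_0$ with positive entries $1,\tau^{-1}$ is what pushes it left), leaving only finitely many eigenvalues near $i\mathbb R$. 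The resolvent estimate on the projection $\mathbf P\mathbf A$ acting on $\mathcal Z=\mathbf P\mathrm H^1$ uses that $\mathbf P$ is a bounded finite-rank perturbation of the identity, so it does not disturb the semigroup estimates one gets for $\mathbf A$ on $\mathrm H^1$.

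I would then apply the center manifold theorem to produce $\Psi\in C^r(\mathbb E_c\times\R^m,\mathcal Z)$ with $\Psi(0;0)=0$, $d\Psi(0;0)=0$, and the manifold $\mathcal M(p)=\{\mathbf v_0+\Psi(\mathbf v_0;p):\mathbf v_0\in\mathbb E_c\}$ enjoying local invariance and the property of containing all globally bounded small solutions — these conclusions are built into the theorem once the linear hypotheses are checked, modulo translating its statement into the $p$-dependent form via the standard trick of appending $\dot p=0$ to the system and restricting. The \emph{main obstacle} I anticipate is the spectral/resolvent verification: one must handle the fact that $\mathbf F$ is not sectorial in the classical parabolic sense (there is no Laplacian — the ``smoothing'' comes only from the convolution, and the principal linear part $-\mathbf L_0+\bar c\partial$ generates merely a $C_0$-group perturbed by a bounded operator, not an analytic semigroup), so the relevant center manifold theorem must be the $C_0$-semigroup version with a genuine spectral gap, and one must carefully establish the uniform resolvent bounds $\|R(z,\mathbf P\mathbf A)\|\le C/|z|$ on vertical strips away from $\sigma_c$. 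A secondary technical point is checking that the nonlinearity $\mathcal N$, built from $S$ composed with convolutions and from the rational expression defining $\lambda(\mathbf v;p)$, is genuinely $C^r$ as a map $\mathcal Z\times\R^m\to\mathcal Z$ — this is where Hypothesis~\ref{hyp:S} ($S\in C^{r+1}$ with bounded derivatives) and the $\mathrm W^{1,1}$ regularity of the kernels combine to give the needed smoothness and the mapping property into $\mathbf P\mathrm H^1$ rather than just $\mathrm L^2$.
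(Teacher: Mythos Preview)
Your proposal is essentially the same route the paper takes: reduce the DAE to the projected semilinear equation \eqref{eq:ioossform} on $\mathcal Z=\mathbf P\mathrm H^1$, verify the spectral gap and resolvent estimate for $\mathbf A_{\mathbf P}=\mathbf P\mathbf A$, check $C^r$ regularity of the nonlinearity, and invoke the Haragus--Iooss center manifold theorem. Two minor points where the paper's execution differs from what you sketch are worth noting. First, for the spectral structure near $i\mathbb R$ the paper does not run a Weyl essential-spectrum argument directly; instead it reuses the quasi-compactness of the semigroup generated by $\mathbf A$ (already established for the linearized stability result) together with a bordered-system computation showing $\rho(\mathbf A)\setminus\{0\}=\rho(\mathbf A_{\mathbf P})\setminus\{0\}$, which immediately gives finiteness of $\sigma_c$ and the gap. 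Second, your concern about the nonlinearity mapping into $\mathcal Z=\mathbf P\mathrm H^1$ is a non-issue in the paper's setup: the Haragus--Iooss framework uses a triple $\mathcal X=\mathcal Y=\mathbf P\mathrm L^2$, $\mathcal Z=\mathbf P\mathrm H^1$, and only requires $\mathbf R\in C^r(\mathcal Z,\mathcal Y)$, i.e.\ the nonlinearity lands in $\mathbf P\mathrm L^2$, which follows directly from the $\mathrm L^2$ regularity lemma for $\tilde{\mathbf F}$ without needing an $\mathrm H^1$ mapping property. The resolvent bound $\|(i\omega I-\mathbf A_{\mathbf P})^{-1}\|\le M/|\omega|$ is obtained exactly as you suggest, from the $C_0$-group bound $\|e^{\mathbf Bt}\rho(\bar ct)\|\le e^{\|\mathbf B\||t|}$ and the standard Hille--Yosida-type estimate.
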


This theorem is the basis for the study of the Hopf bifurcation points that appear in the simulations in section~\ref{section:simus}, it was also mentioned in \cite{sandstede_essential_2001, sandstede_structure_2001} and in \cite{haragus_local_2011}. Indeed, assume that $\pm\i\omega\in\Sigma(\mA)$ for some $\omega>0$ with associated eigenvectors $\zeta,\bar\zeta$ and no other eigenvalue is present on the imaginary axis. Further assume that $\ker\mA = span\{\partial\bar\mv\}$. Then, the previous theorem yields a two-dimensional invariant manifold on which we can apply normal form theory. Generically, we expect to see the following dynamics
\[
\dot z = (\alpha +\i\omega)z+\beta z|z|^2+h.o.t,\quad \alpha,\beta\in\mathbb C,
\]
where $\mv(t) = z(t)\zeta + \bar z(t)\bar\zeta + h.o.t.$ and $h.o.t.$ denotes higher order terms.
The analysis of bifurcations like the Fold, Hopf or Bogdanov-Takens ones is based on the previous theorem and is left for future analysis. Note that these bifurcations of TW appear in the numerical analysis (section~\ref{section:simus}) of the neural fields model \eqref{eq:nfe}.

\subsection{Linear stability of modulated traveling waves}

\begin{definition}[Orbital stability of MTW]\label{def:stablemtp}
	A modulated traveling wave $(\tilde\mv,\bar c, T)$ is exponentially and asymptotically orbitally stable if there are $M\geq0, \alpha>0$ such that for all $\epsilon>0$ sufficiently small, there is $\delta>0$ such that for each solution $\mfu$ of \eqref{eq:cauchy} with $\norm{\mfu(0)-\tilde\mv(\cdot-\bar ct+\gamma_0,\phi_0)}\leq \delta$ for some $\gamma_0,\phi_0\in\R$, there exist shifts $\gamma,\phi\in\R$ such that
	\[
	\forall t\geq0,\ \norm{\mfu(\cdot,t) - \tilde\mv(\cdot-\bar ct+\gamma,t+\phi)}\leq \epsilon.
	\]
	Additionally, 
	\[
	\norm{\mfu(\cdot,t) - \tilde\mv(\cdot-\bar ct+\gamma,t+\phi)}\leq Me^{-\alpha t}.
	\]
\end{definition}

The stability of MTW for reaction-diffusion equations (see \cite{sandstede_stability_2002,sandstede_structure_2001}) is determined by the (Floquet multipliers) spectrum of the linearized time-$T$ map associated with \eqref{eq:mtw_ode}. If $1$ is a Floquet multiplier (one for time / space equivariance) with geometric and algebraic multiplicity two and the rest of spectrum lies in a compact subset of the open unit disk, then the MTW is stable. This is yet another example of the \textit{principle of linearized stability} for waves in reaction-diffusion equations.

Given a MTW $\tilde\mv$ of period $T$ solution to \eqref{eq:mtw_ode}, we consider the monodromy operator $\mathbf S$ which is the evolution operator of the linearized equation around $\tilde\mv$ at time $T$. More precisely, if we denote by $\mathbf U(t,s)\mv_s$ the solution of the linearized problem
\begin{equation}\label{eq:linper}
	\frac{d\mv}{dt} = \bar c \partial\mv + d\mF(\tilde\mv(t);p)\mv :=\mA(t)\cdot\mv,
\end{equation}
with initial condition $\mv(s) = \mv_s\in\Lp^2$, then we have $\mathbf S:=\mathbf U(T,0)$. The spectral properties of $\mathbf S$ control the long time behavior of the solutions of \eqref{eq:linper} thanks to the identity
\[
\mathbf U(nT+s,0) = \mathbf S^n\mathbf U(s,0),\quad\forall n\in\mathbb N, s\geq 0.
\]

\begin{proposition}\label{prop:stab-mtw}
	Grant \ref{hyp:W}.
	Let us consider a modulated traveling wave $(\tilde\mv,\bar c, T)$ of period $T$ solution to \eqref{eq:mtw_ode}.
	Assume that $\tilde\mv\in H^1_{per}([0,T], \Hp^1)$ \textit{i.e.} it is $T$-periodic. Then we have the following properties.
	\begin{enumerate}
		\item The intersection of the spectrum $\Sigma(\mathbf S)$ with the unit circle consists of a finite (or empty) set of isolated eigenvalues with finite algebraic multiplicity.
		\item If $\tilde\mv\in H^2_{per}([0,T], \Hp^1)$, then $1\in\Sigma_p(\mathbf S)$ is at least a geometrically double eigenvalue provided that $\tilde\mv(\xi,t)$ is not equal to $h(\xi- \alpha t)$ for $\alpha\in\R$ and $h$ periodic. In that case, $\partial_t\tilde\mv(\cdot, 0), \partial\tilde \mv(\cdot, 0)\in \ker(\mathbf S-I)$.
		\item Assume that $1\in\Sigma_p(\mathbf S)$ is an eigenvalue with geometric and algebraic multiplicities two and the other eigenvalues are in the open unit disk. Then, for all $s\in\mathbb R$ and for all $\mv\in span(\partial_t\tilde\mv(\cdot, s), \partial\tilde \mv(\cdot, s))^\perp$
		\begin{equation}
			\|\mU(t,s)\mv\|_{2} \to 0 \text{ as }t\to\infty.
		\end{equation}
	\end{enumerate}
\end{proposition}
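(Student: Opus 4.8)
\emph{Proof strategy.} The argument runs parallel to the proof of Theorem~\ref{th:linstab}, with the monodromy $\mathbf S$ in the role of $\mA$, and has three ingredients. First, for item~1, write $\mA(t)=\mA_\infty+\mathbf C(t)$, where $\mfu_\infty:=\lim_{\xi\to\pm\infty}\tilde\mv(\xi,t)$ is the common (time-independent) limit, $\mA_\infty\mv:=\bar c\,\partial\mv-\mathbf{L}_0\mv+\mathbf{L}_0 D_\infty(\mW\cdot\mv)$ is the operator with coefficients frozen at $\mfu_\infty$ ($D_\infty$ the diagonal matrix of the $S'$-values there), and $\mathbf C(t)\mv:=\mathbf{L}_0(D(t,\cdot)-D_\infty)(\mW\cdot\mv)$. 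Since $\tilde\mv(t,\cdot)-\mfu_\infty\in\Hp^1\hookrightarrow C_0$ and $S'$ is continuous, the matrix coefficient $D(t,\cdot)-D_\infty$ belongs to $C_0(\R,\mathcal M_2(\R))$; as, by Hypothesis~\ref{hyp:W}, convolution with $\mW\in\Wp$ is a Fourier multiplier with symbol $\widehat{\mW}\in C_0$ (Riemann--Lebesgue), each $\mathbf C(t)$ is compact on $\Lp^2$, and $t\mapsto\mathbf C(t)$ is norm-continuous by $\tilde\mv\in C([0,T],\Hp^1)$. Duhamel's formula gives
\[
\mathbf S=e^{T\mA_\infty}+\mK,\qquad \mK:=\int_0^T e^{(T-r)\mA_\infty}\,\mathbf C(r)\,\mU(r,0)\,dr,
\]
and $\mK$ is compact, being a norm-convergent integral of compact operators; hence $\Sigma_{\mathrm{ess}}(\mathbf S)=\Sigma_{\mathrm{ess}}(e^{T\mA_\infty})$. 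A Fourier transform identifies $\Sigma(e^{T\mA_\infty})=\overline{\{e^{T\mu}:\mu\in\sigma(i\bar ck\,\id-\mathbf{L}_0+\mathbf{L}_0 D_\infty\widehat{\mW}(k)),\ k\in\R\}}$; since $\widehat{\mW}(k)\to0$ as $|k|\to\infty$, these eigenvalues have real parts tending to $-\min(1,\tau^{-1})<0$, and spectral stability of the homogeneous state $\mfu_\infty$ bounds them above by some $-\omega_0<0$. Thus $\Sigma_{\mathrm{ess}}(\mathbf S)$ lies in the disk of radius $\rho_0:=e^{-\omega_0T}<1$; on $\{|z|>\rho_0\}$ the operator $\mathbf S-z$ is Fredholm of index $0$ and invertible for $|z|$ large, so by the analytic Fredholm theorem $\Sigma(\mathbf S)\cap\{|z|>\rho_0\}$ consists of isolated eigenvalues of finite algebraic multiplicity, which cannot accumulate on the compact unit circle. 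This is item~1.

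For item~2, differentiating the identity ``$\tilde\mv$ solves \eqref{eq:mtw_ode}'' with respect to $t$ and with respect to $\xi$ (using that \eqref{eq:mtw_ode} is autonomous and $\xi$-translation invariant) shows that $t\mapsto\partial_t\tilde\mv(\cdot,t)$ and $t\mapsto\partial\tilde\mv(\cdot,t)$ both solve \eqref{eq:linper}; the regularity $\tilde\mv\in H^2_{per}([0,T],\Hp^1)$ makes $\partial_t\tilde\mv(\cdot,0)\in\Hp^1$ a strong solution, and $\partial\tilde\mv(\cdot,0)\in\Hp^1$ is read off the mild formulation. Periodicity of $\tilde\mv$ then gives $\partial_t\tilde\mv(\cdot,0),\partial\tilde\mv(\cdot,0)\in\ker(\mathbf S-\id)$, so $1\in\Sigma_p(\mathbf S)$. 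If these vectors were linearly dependent at $t=0$ then, since $\partial_t\tilde\mv(\cdot,t)=\mU(t,0)\partial_t\tilde\mv(\cdot,0)$ and likewise for $\partial\tilde\mv$, they stay dependent for all $t$: either $\partial\tilde\mv\equiv0$, forcing $\tilde\mv$ to be a spatially homogeneous periodic orbit of the space-clamped system (excluded in the regime considered), or $\partial_t\tilde\mv=\mu\,\partial\tilde\mv$ for a constant $\mu$; in the latter case $w(\xi,t):=\tilde\mv(\xi-\mu t,t)$ satisfies $\partial_t w(\cdot,0)=0$ and $\partial_t w=(\bar c-\mu)\partial w+\mF(w)$, so $w(\cdot,0)$ is an equilibrium of this equation, and uniqueness of the Cauchy problem makes $w$ stationary, i.e. $\tilde\mv(\xi,t)=h(\xi-\alpha t)$ with $\alpha=-\mu$ and $h$ $T$-periodic --- excluded by hypothesis. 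Hence $\partial_t\tilde\mv(\cdot,0)$ and $\partial\tilde\mv(\cdot,0)$ are independent and $1$ is geometrically (at least) double.

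For item~3, by items~1--2 and the assumption that the remaining spectrum of $\mathbf S$ lies in the open unit disk, $1$ is an isolated point of $\Sigma(\mathbf S)$; let $\Pi$ be its Riesz spectral projector. Since its geometric and algebraic multiplicities both equal $2$, $\mathcal R(\Pi)=\ker(\mathbf S-\id)=\mathrm{span}\{\partial_t\tilde\mv(\cdot,0),\partial\tilde\mv(\cdot,0)\}$, while $\Sigma(\mathbf S|_{\ker\Pi})=\Sigma(\mathbf S)\setminus\{1\}$ is a compact subset of the open unit disk, hence of spectral radius $\rho<1$; so $\|\mathbf S^n(\id-\Pi)\|\le C_r\,r^n$ for every $r\in(\rho,1)$. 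Because $\mA(t+T)=\mA(t)$, the monodromy based at $s$, $\mathbf S_s:=\mU(s+T,s)=\mU(s,0)\,\mathbf S\,\mU(0,s)$, has Riesz projector $\Pi_s=\mU(s,0)\,\Pi\,\mU(0,s)$ with $\mathcal R(\Pi_s)=\mathrm{span}\{\partial_t\tilde\mv(\cdot,s),\partial\tilde\mv(\cdot,s)\}$ and the same spectral gap, and $\mU(s+nT+\sigma,s)=\mU(s+\sigma,s)\,\mathbf S_s^{\,n}$ for $\sigma\in[0,T]$. For $\mv$ in the complement $\ker\Pi_s$ of the neutral subspace --- equivalently, $\mv$ orthogonal to the two neutral modes of the adjoint monodromy $\mathbf S_s^{*}$ spanning $\ker(\mathbf S_s^{*}-\id)$, which is the subspace meant by $\mathrm{span}(\partial_t\tilde\mv(\cdot,s),\partial\tilde\mv(\cdot,s))^\perp$ in the statement --- writing $t=s+nT+\sigma$ with $\sigma\in[0,T]$ gives $\|\mU(t,s)\mv\|_2\le M\,C_r\,r^n\,\|\mv\|_2\to0$ as $t\to\infty$, where $M:=\sup_{0\le\sigma\le T}\|\mU(s+\sigma,s)\|<\infty$.

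The main obstacle is item~1: showing that the Duhamel remainder $\mK$ is genuinely compact --- so that $\Sigma_{\mathrm{ess}}(\mathbf S)$ is governed by the frozen operator $\mA_\infty$ --- and that this essential spectrum lies strictly inside the unit disk, which rests on the spectral stability of the homogeneous limit state $\mfu_\infty$ (the analogue here of the bound $w<0$ in Theorem~\ref{th:linstab}). With item~1 in hand, items~2 and~3 are the expected relative-periodic-orbit and spectral-mapping arguments.
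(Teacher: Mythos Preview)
Your items~2 and~3 track the paper's argument closely; the substantive difference is entirely in item~1, where you choose a different decomposition.

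The paper peels off the bare transport-plus-decay operator $\bar c\,\partial-\mathbf L_0$ and writes, via Duhamel,
\[
\mathbf S=\mT_0(T)+\int_0^T\mT_0(T-s)\,\tilde\mW(s)\,\mU(s,0)\,ds.
\]
A spatial cutoff $\chi_n$ then splits the integral into a compact part $\mK_n$ (since $\chi_n\tilde\mW(s)$ is compact, exactly as in the proof of Proposition~\ref{prop:linstabTW}) and a remainder $\mB_n$ whose norm is made small; because $\|\mT_0(T)\|\le e^{-\min(1,\tau^{-1})T}<1$ holds \emph{unconditionally}, $\Sigma(\mT_0(T)+\mB_n)$ lies strictly inside the unit disk and Corollary~\ref{coro:SigTpK} applied to the compact perturbation $\mK_n$ gives the conclusion. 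The intended gain of this choice of base operator is that its spectral radius is controlled with no hypothesis on the background state.

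You instead freeze at the asymptotic state $\mfu_\infty$, so the whole perturbation $\mathbf C(t)$ is compact in one stroke (via $D(t,\cdot)-D_\infty\in C_0$ together with $\mW\in\Wp$), and then locate $\Sigma(e^{T\mA_\infty})$ by Fourier. This is the standard essential-spectrum viewpoint for waves and is in some ways cleaner---no cutoff juggling is needed. The cost, which you flag yourself, is an extra hypothesis: placing $\Sigma(e^{T\mA_\infty})$ strictly inside the unit disk is precisely spectral stability of the homogeneous rest state $\mfu_\infty$, and the proposition grants only Hypothesis~\ref{hyp:W}. So as written your argument proves a slightly weaker statement than the one claimed. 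Your decomposition also tacitly assumes a single common limit $\mfu_\infty$ at both spatial ends (the pulse case); for modulated fronts one would need two asymptotic operators and a partition of unity.
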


Note that the first part implies a spectral gap.
The crux of the previous proposition is that the linear stability of MTW is controlled by the \textit{eigenvalues} of the monodromy operator $\mathbf S$. We conjecture that this should remain true for the nonlinear stability although the proof would take us too far.

\begin{conjecture}[Nonlinear stability] Consider a MTW $(\tilde\mv,\bar c,T)$ solution to \eqref{eq:mtw_ode} where $\tilde\mv\in H^2_{per}([0,T], \Hp^1)$ and $\bar c>0$. Assume that apart from the eigenvalue $1$, the {eigenvalues} of $\mathbf S$ are included in a compact subset of the open unit disk. Assume that $1$ is an eigenvalue with geometric and algebraic multiplicities two. Then, the MTW is exponentially and asymptotically orbitally stable.
\end{conjecture}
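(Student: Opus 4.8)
\medskip
\noindent\emph{Sketch of a possible proof.} The plan is to pass to the co-moving frame $\xi=x-\bar ct$, in which $\tilde\mv$ is a genuine $T$-periodic orbit of the semiflow $\Phi_t$ generated by \eqref{eq:mtw_ode}, and then to run a modulation-equation argument of Andronov--Witt type, the two neutral directions of the monodromy $\mathbf S$ being absorbed by a spatial shift $\gamma$ and a temporal phase $\beta$. (Equivalently, one can work with the time-$T$ map $\Phi_T$, for which $\tilde\mv(\cdot,0)$ lies in a normally hyperbolic, purely contracting two-dimensional manifold of fixed points, and invoke the stable-manifold theorem for maps followed by interpolation over one period.) First, $\Phi_t$ is a well-defined $C^r$ semiflow on $\Lp^2$: the unbounded part $\bar c\partial$ generates the isometric translation group $e^{t\bar c\partial}=\rho(-\bar ct)$, while $\mfu\mapsto\mF(\mfu;p)$ is globally Lipschitz with bounded derivatives of order $\le r+1$ (by Hypothesis~\ref{hyp:W}, convolution against $K_k$ is bounded $\Lp^2\to\Hp^1$; by Hypothesis~\ref{hyp:S}, $S$ and all its derivatives are bounded). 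Combining Proposition~\ref{prop:stab-mtw}(1), the compactness of $\Sigma(\mathbf S)$ and the hypothesis that $1$ is the only unit-circle eigenvalue yields a spectral gap $\Sigma(\mathbf S)\setminus\{1\}\subset\{z:|z|\le\rho_0\}$ with $\rho_0<1$; hence $\Lp^2=E_1\oplus E_s$ with $\mathbf S$-invariant summands, $E_1=\ker(\mathbf S-\id)=\mathrm{span}\{\partial_t\tilde\mv(\cdot,0),\partial\tilde\mv(\cdot,0)\}$ two-dimensional by Proposition~\ref{prop:stab-mtw}(2) and the algebraic-multiplicity assumption, and $\norm{\mathbf S^n|_{E_s}}\le C\rho_0^{\,n}$.

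\medskip
\noindent\emph{Modulation ansatz.} For a solution $\mv$ of \eqref{eq:mtw_ode} close to the orbit, write $\mv(t)=\rho(\gamma(t))\,\tilde\mv(\cdot,\beta(t))+w(t)$, requiring $w(t)\in\rho(\gamma(t))\,\mathcal R\!\left(\id-\Pi(\beta(t))\right)$, where $\Pi(\beta)$ is the rank-two spectral projection of the shifted monodromy $\mathbf U(\beta+T,\beta)$ onto $E_1(\beta):=\mathrm{span}\{\partial_t\tilde\mv(\cdot,\beta),\partial\tilde\mv(\cdot,\beta)\}$; being finite rank, $\Pi$ depends $C^1$ on $\beta$ after a regularity bootstrap for $\tilde\mv$ (via $\partial_t\tilde\mv=\bar c\partial\tilde\mv+\mF(\tilde\mv)$) and for the adjoint Floquet eigenfunctions. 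Since the two neutral vectors are linearly independent (Proposition~\ref{prop:stab-mtw}(2)), $(\gamma,\beta)\mapsto\rho(\gamma)\tilde\mv(\cdot,\beta)$ is an immersion near the orbit and the implicit function theorem makes $(\gamma,\beta,w)$ well defined and smooth while $\norm{w}$ and the total deviation stay small. Projecting \eqref{eq:mtw_ode} gives modulation equations $\dot\gamma=\mathcal N_\gamma$, $\dot\beta=1+\mathcal N_\beta$ with $\mathcal N_{\gamma,\beta}=\mathcal O(\norm{w}^2)$ (solved for $(\dot\gamma,\dot\beta-1)$ by the implicit function theorem), together with the variation-of-constants identity
\[
w(t)=\mathbf U_s(t,0)(\id-\Pi(0))w(0)+\int_0^t\mathbf U_s(t,\sigma)\,\mathcal R_{\mathrm{nl}}\!\big(w(\sigma),\dot\gamma(\sigma),\dot\beta(\sigma)\big)\,d\sigma,
\]
where $\mathbf U_s$ is the evolution restricted to the contracting fibre and $\mathcal R_{\mathrm{nl}}$ collects the quadratic Taylor remainder of $\mF$ (quadratic because $S''$ is bounded) and the bounded $\dot\beta\,\Pi'(\beta)$-terms.

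\medskip
\noindent\emph{Closing the estimate.} One then solves simultaneously for $w$ and $(\gamma,\beta)$ by a contraction mapping in the exponentially weighted space $\{w\in C^0([0,\infty);\Lp^2):\sup_{t\ge0}e^{\alpha t}\norm{w(t)}_2<\infty\}$ with $0<\alpha<-\tfrac1T\log\rho_0$: for $\norm{w(0)}_2\le\delta$ small enough, using $\norm{\mathbf U_s(t,\sigma)}\le C e^{-\alpha(t-\sigma)}$ and the quadratic bound on $\mathcal R_{\mathrm{nl}}$, the map is a contraction, yielding $\norm{w(t)}_2\le Me^{-\alpha t}$; since then $|\dot\gamma|+|\dot\beta-1|\le M'e^{-2\alpha t}$ is integrable, $\gamma(t)\to\gamma_\infty$ and $\beta(t)-t\to\phi_\infty$ with exponential rate. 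Undoing the frame change, and using that $\rho(\cdot)$ is an $\Lp^2$-isometry and $\sup_s\big(\norm{\partial\tilde\mv(\cdot,s)}_2+\norm{\partial_t\tilde\mv(\cdot,s)}_2\big)<\infty$, one gets $\norm{\mfu(\cdot,t)-\tilde\mv(\cdot-\bar ct+\gamma_\infty,t+\phi_\infty)}_2\le M''e^{-\alpha t}$, which is exactly Definition~\ref{def:stablemtp}.

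\medskip
\noindent\emph{Main obstacle.} The delicate point is the \emph{neutral flow direction} $\partial_t\tilde\mv$, whose Floquet multiplier is exactly $1$: it does not decay and must be carried entirely by the phase $\beta$ rather than by $w$, so that one must (i) keep the modulation decomposition well defined along the whole semiorbit and (ii) close the coupled nonlinear fixed point \emph{without} any parabolic smoothing. The enabling structural fact is that $\bar c\partial$ generates an isometry group on $\Lp^2$, so no derivatives are lost and $\Lp^2$ is the correct phase space; this is what makes the abstract machinery of \cite{beyn_freezing_2004,rottmann-matthes_stability_2012,beyn_stability_2014} for relative periodic orbits adaptable to the present setting, the remaining technical nuisance — the $C^1$-dependence of $\Pi(\beta)$ on $\beta$ — being resolved by the finite rank of $\Pi$ together with the regularity bootstrap mentioned above.
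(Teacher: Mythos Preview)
The statement you address is explicitly a \emph{conjecture} in the paper: the authors write that a nonlinear-stability principle for MTW ``should remain true\dots although the proof would take us too far'' and supply no argument beyond the linear spectral result of Proposition~\ref{prop:stab-mtw}. There is therefore no proof in the paper against which to compare your proposal; you have in fact gone further than the authors do.

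Your sketch is a coherent roadmap. The two-phase modulation ansatz absorbing the translation and time-shift kernel directions, followed by a contraction in an exponentially weighted $\Lp^2$-space, is the natural Andronov--Witt/Henry strategy for relative periodic orbits, and you correctly observe that the lack of parabolic smoothing is compensated by $\bar c\partial$ generating an isometry group --- so that a semilinear theory on $\Lp^2$ suffices and the time-$T$ map (rather than a genuine Poincar\'e map with variable return time, which is exactly the obstruction the authors flag in the Discussion) is the right object.

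Two points would require attention before this becomes a proof. First, the assertion that $\mfu\mapsto\mF(\mfu;p)$ maps $\Lp^2$ to itself is false: $S(\mW\cdot\mfu-\theta)\to S(-\theta)\ne 0$ at infinity, so one must work in the affine slice $(u_i,v_i)+\Lp^2$ and invoke the analogue of Lemma~\ref{lemma:FCk} there. Second, and more substantively, the evolution $\mU_s(t,\sigma)$ in your Duhamel formula depends on the unknown phases through the coefficient $d\mF\!\big(\rho(\gamma(t))\tilde\mv(\cdot,\beta(t))\big)$. The usual remedy --- freeze $\gamma\equiv0,\ \beta(t)=t$ in the linear part and push the phase corrections into $\mathcal R_{\mathrm{nl}}$ --- produces an additional term $[d\mF(\rho(\gamma)\tilde\mv(\cdot,\beta))-d\mF(\tilde\mv(\cdot,t))]w$ that is only $O\big((|\gamma|+|\beta-t|)\norm{w}\big)$, not $O(\norm{w}^2)$; since $\gamma$ and $\beta-t$ converge to small but nonzero limits, this must be handled as a small bounded perturbation of the linear evolution rather than absorbed into the nonlinear remainder. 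That step, together with the $C^1$-in-$\beta$ regularity of the adjoint Floquet eigenfunctions needed for $\Pi'(\beta)$, is where the real work of the conjecture sits; your ``Main obstacle'' paragraph names the right difficulties but does not yet resolve them.
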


\section{Stability of traveling waves}\label{section:nlstat}

We prove some of the results of the previous section. Our general strategy is adapted from \cite{rottmann-matthes_stability_2012} but for a different setting. For a different method, see \cite{faye_traveling_2018} albeit for TF in lattice neural fields.
Let us consider a TW $(\bar\mv, \bar c)$.
In our notations \eqref{eq:nfe-mK} and \eqref{eq:oplinstab}, we find that
\begin{equation}\label{eq:mA-NF}
\mA = \bar c\partial -\mathbf L_0+\mathbf L_0S'\left(\mW\cdot \bar\mv+\theta\right)\mW:= \bar c\partial -\mathbf L_0+\tilde\mW.
\end{equation}
Let $\mB := -\mathbf L_0+\tilde\mW$ so that we can write \eqref{eq:oplinstab}
\[
\mA = \bar c\partial + \mB.
\]

\subsection{Linear stability of traveling waves}
We start with a sequence of results to prove the linear stability of the TW in $\Lp^2$.
\begin{lemma}\label{lem:T}
	Let $(\bar\mv ,\bar c)$ be a TW solution to \eqref{eq:nfe-mK}.
	Grant \ref{hyp:S}, \ref{hyp:barv} and \ref{hyp:W}. 
	Then $\mB\in\mathcal L(\Lp^2)$ and $(\mA, D(\mA))$ with domain $D(\mA) := \Hp^1\subset\Lp^2$ generates a strongly continuous semigroup $(\mT(t))_{t\geq 0}$ on $\Lp^2$.	
\end{lemma}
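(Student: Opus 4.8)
The plan is to split $\mA = \bar c\partial + \mB$ into the transport generator $\bar c\partial$ and the bounded perturbation $\mB$, then invoke a bounded-perturbation theorem for $C_0$-semigroups. First I would establish that $\mB = -\mathbf L_0 + \tilde\mW \in \mathcal L(\Lp^2)$. The term $\mathbf L_0 = \mathrm{diag}(1,\tau^{-1})$ is clearly bounded. For $\tilde\mW = \mathbf L_0 S'(\mW\cdot\bar\mv + \theta)\mW$, I would argue as follows: by Hypothesis~\ref{hyp:S}, $S'$ is bounded, so multiplication by the (matrix-valued) function $S'(\mW\cdot\bar\mv+\theta)$ is a bounded pointwise multiplier on $\Lp^2$; and the convolution operator $\mW$ with entries $\mW_{ij} = a_{ij}K_j$ is bounded on $\Lp^2$ by Young's inequality, since $K_j \in \mathrm W^{1,1} \subset \mathrm L^1$ (Hypothesis~\ref{hyp:W}) gives $\|K_j * u\|_{\Lp^2} \le \|K_j\|_{\mathrm L^1}\|u\|_{\Lp^2}$. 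Composing these three bounded operators yields $\tilde\mW \in \mathcal L(\Lp^2)$, hence $\mB \in \mathcal L(\Lp^2)$.

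Next I would show that $\bar c\partial$ with domain $\Hp^1$ generates a $C_0$-semigroup on $\Lp^2$ — in fact the translation group $(\mT_0(t)w)(\xi) = w(\xi + \bar c t)$, which is a strongly continuous group of isometries on $\Lp^2(\R,\R^2)$ (a standard fact; one can also see this via Stone's theorem since $\partial$ is skew-adjoint, or directly). Its generator is exactly $(\bar c\partial, \Hp^1)$. Then, since $\mB$ is a bounded operator, the standard bounded-perturbation theorem for semigroup generators (e.g.\ Engel--Nagel, or Pazy) shows that $\mA = \bar c\partial + \mB$ with the \emph{same} domain $D(\mA) = \Hp^1$ generates a $C_0$-semigroup $(\mT(t))_{t\ge0}$ on $\Lp^2$, with the growth bound controlled by $\|\mB\|_{\mathcal L(\Lp^2)}$.

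I do not anticipate a serious obstacle here; this lemma is essentially bookkeeping to set up the functional-analytic framework for the harder spectral results that follow. The only mildly delicate point is checking that the nonlinear-profile multiplier $S'(\mW\cdot\bar\mv+\theta)$ genuinely maps $\Lp^2$ to $\Lp^2$ boundedly — this needs $\mW\cdot\bar\mv$ to be a well-defined (measurable, essentially bounded suffices) function, which follows because $\bar\mv$ is a bounded profile (its limits at $\pm\infty$ are among the $(u_i,v_i)$, and $S$ takes values in $[0,1]$ so $\bar\mv$ is bounded) and $K_j\in\mathrm L^1$, so $\mW\cdot\bar\mv \in \mathrm L^\infty$ and then $S'$ of it is in $\mathrm L^\infty$ as well. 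Once that is in place, everything else is a direct citation of classical semigroup theory.
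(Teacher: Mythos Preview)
Your proposal is correct and follows essentially the same route as the paper: show $\mB\in\mathcal L(\Lp^2)$ via Young's inequality for the convolution $\mW$ together with boundedness of $S'$, note that $\bar c\partial$ with domain $\Hp^1$ generates the translation (semi)group on $\Lp^2$, and conclude by the bounded-perturbation theorem (the paper cites Engel--Nagel, Theorem~III.1.3). Your additional remark that $\mW\cdot\bar\mv\in\Lp^\infty$ to justify the multiplier is a nice clarification the paper leaves implicit.
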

\begin{proof}
	\rem{Brezis 4.15 Young inequality $L^1\star L^p \subset L^p$ gives $\mB\in\mathcal L(\Lp^2)$. }
	We first observe that $K_k\in\mathcal L(L^2)$ by hypothesis~\ref{hyp:W}.
	 This is a consequence of the Young's inequality for convolutions \cite{brezis_functional_2010} as $K_k\in\Lp^1$.
	 It then follows that $\mW \in\mathcal L(L^2)$ by hypothesis~\ref{hyp:W} and so are $\mB$ and $\tilde \mW$ by hypothesis~\ref{hyp:S}.
	 
	Next,  it is well known \cite{engel_one-parameter_2000} that $\bar c\partial$ with domain $D(\mA)$ and with $\bar c\neq 0$ (by hypothesis~\ref{hyp:barv}) generates a strongly continuous semigroup on $\Lp^2$, called the left translation semigroup. Indeed, $\dot \mfu = \bar c\partial \mfu$ has general solution $\mfu_0(x +\bar c t)$ for some $\mfu_0\in \Lp^2$. Then $\mA =\bar c\partial+\mathbf B$ is a bounded perturbation of $\bar c\partial$ because $\mathbf B\in\mathcal L(\Lp^2)$. It therefore \cite{engel_one-parameter_2000}[Theorem II.1.3] generates a strongly continuous semigroup on $\Lp^2$ as well.
\end{proof}
Next, we look at the spectrum of $\mA$ defined in \eqref{eq:mA-NF}.
Its computation poses several challenges. Indeed, the classical method \cite{henry_geometric_1981, faye_traveling_2018} is to view \eqref{eq:mA-NF} as a Fredholm operator for which the structure of the spectrum is well known. However this is not completely straightforward and we provide another argument. We rely on the notion of \textit{quasi-compact} operators \cite{engel_one-parameter_2000} for which the spectrum is well understood.

\begin{definition}[Quasi-compact semigroup \cite{engel_one-parameter_2000}]
	A strongly continuous semigroup $(\mT(t))_{t\geq 0}$ on a Banach space $X$ is called \textit{quasi-compact} if
	\[
	\lim_{t\to\infty}\inf\{\|\mT(t) -\mathbf K\|\ :\ \mathbf K\in\mathcal L(X),\ \mathbf K\text{ compact}    \} = 0.
	\]
\end{definition}

We introduce cutoff functions $\chi_n\in C^\infty(\R,[0,1])$ for $n\geq 0$ that are equal to one on $[-n,n]$ and zero outside $[-n-1,n+1]$.

\begin{definition}[Growth bound \cite{engel_one-parameter_2000}]
	For a strongly continuous semigroup $(\mT(t))_{t\geq 0}$ on $\mathcal X$, the growth bound is defined as
	\[
	\omega_0(\mT) = \inf\{\omega\in\R\ |\ \exists M_\omega\geq 1\text{ such that }\forall t\geq 0,\  \norm{\mT(t)}\leq M_\omega e^{\omega t} \}.
	\]
\end{definition}

\begin{lemma}\label{lem:T0}
	Let $(\bar\mv ,\bar c)$ be a TW solution to \eqref{eq:nfe-mK}.
	Grant \ref{hyp:barv}. 
	The operator $\mA_0 = \bar c \partial -\mathbf L_0$ with domain $D(\mA_0)=D(\mA)$ generates a strongly continuous semigroup $(\mT_0(t))_{t\geq 0}$ on $\Lp^2$ such that 
\begin{equation}\label{eq:bound-T0}
\norm{\mT_0(t)}_2 \leq \exp\left( -\min(1,\tau^{-1})t\right).
\end{equation}
Its growth bound $\omega_0(\mT_0)$ is such that 
	\[
	\omega_0(\mT_0)\leq -\min(1,\tau^{-1})<0.
	\]
\end{lemma}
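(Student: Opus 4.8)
The plan is to solve the equation $\dot{\mfu} = \mA_0\mfu$ explicitly, using the fact that $\mA_0 = \bar c\partial - \mathbf L_0$ is the sum of two commuting operators: the generator $\bar c\partial$ of the translation semigroup and the bounded diagonal operator $-\mathbf L_0 = -\mathrm{diag}(1,\tau^{-1})$. Since $\mathbf L_0$ is a (bounded) multiple of a constant diagonal matrix, it commutes with $\partial$, so the semigroup generated by $\mA_0$ factorizes as $\mT_0(t) = e^{-t\mathbf L_0}\,\mT_{tr}(t)$, where $(\mT_{tr}(t))_{t\ge0}$ is the left translation semigroup from Lemma~\ref{lem:T}, i.e. $(\mT_{tr}(t)\mfu_0)(x) = \mfu_0(x + \bar c t)$, and $e^{-t\mathbf L_0} = \mathrm{diag}(e^{-t}, e^{-t/\tau})$. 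First I would invoke the bounded perturbation theorem (as in the proof of Lemma~\ref{lem:T}, \cite{engel_one-parameter_2000}[Theorem II.1.3]) to get that $\mA_0$ with domain $D(\mA_0) = D(\mA) = \Hp^1$ generates a strongly continuous semigroup on $\Lp^2$; then I would verify the explicit factorization formula, which follows from the variation-of-constants formula or directly from the commuting-operators calculus since $[\partial, \mathbf L_0] = 0$.

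The norm bound \eqref{eq:bound-T0} is then immediate: the translation semigroup is isometric on $\Lp^2$, so $\norm{\mT_{tr}(t)\mfu_0}_2 = \norm{\mfu_0}_2$, and for the diagonal matrix multiplier we have, for $\mfu_0 = (u_0, v_0)$,
\[
\norm{e^{-t\mathbf L_0}\mfu_0}_2^2 = e^{-2t}\norm{u_0}_{\Lp^2(\R,\R)}^2 + e^{-2t/\tau}\norm{v_0}_{\Lp^2(\R,\R)}^2 \le e^{-2\min(1,\tau^{-1})t}\norm{\mfu_0}_2^2,
\]
so $\norm{\mT_0(t)}_2 \le e^{-\min(1,\tau^{-1})t}$. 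The statement on the growth bound follows by definition: taking $M_\omega = 1$ and $\omega = -\min(1,\tau^{-1})$ is admissible, hence $\omega_0(\mT_0) \le -\min(1,\tau^{-1}) < 0$ since $\tau > 0$.

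There is essentially no hard part here — this lemma is a routine computation whose only role is to provide a quantitative exponentially decaying bound on the ``unperturbed'' (nonlocal-coupling-free) semigroup, to be used later (presumably in a Dyson/Duhamel expansion together with the compactness coming from the cutoff functions $\chi_n$) to establish quasi-compactness of $(\mT(t))_{t\ge0}$ and thereby control the spectrum of $\mA$. The one point requiring a line of care is checking that $D(\mA_0) = D(\mA)$ is the correct domain on which $\mA_0$ generates — but this is immediate since $\mA$ and $\mA_0$ differ by the bounded operator $\tilde\mW \in \mathcal L(\Lp^2)$, so adding or subtracting a bounded operator does not change the domain, and the generation property transfers by the bounded perturbation theorem in either direction.
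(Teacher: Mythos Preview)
Your proposal is correct and follows essentially the same approach as the paper: the paper's proof simply states that $\mT_0(t)\mfu = e^{-\mathbf L_0 t}\mfu(\cdot + \bar c t)$ and that the lemma follows easily, and your argument spells out exactly this factorization and the resulting norm estimate.
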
	
\begin{proof}
	It is straightforward to check that 
	$\mT_0(t)\mfu = e^{-\mathbf L_0t}\mfu(x +\bar c t)$.
	The lemma follows easily.
\end{proof}

\begin{lemma}
	Let $(\bar\mv ,\bar c)$ be a TW solution to \eqref{eq:nfe-mK}.
	Grant \ref{hyp:S}, \ref{hyp:barv} and \ref{hyp:W}. 
	There exists $n\in\mathbb N$ large enough such that the semigroup $(\mT_n(t))_{t\geq 0}$  generated by 
	\[
	\bar c\partial -\mathbf L_0+(1-\chi_n)\tilde\mW
	\]
	has growth bound 
	\begin{equation}\label{eq:omega-Tn}
	\omega_0(\mT_n) < 0. 
	\end{equation}
	As a consequence, it is a quasi-compact semigroup.
\end{lemma}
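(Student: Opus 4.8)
The goal is to show that for $n$ large, the operator $\mA_n := \bar c\partial - \mathbf L_0 + (1-\chi_n)\tilde\mW$ generates a semigroup with negative growth bound, and that this semigroup is quasi-compact. My strategy is: (i) treat $\mA_n$ as a small bounded perturbation of $\mA_0 = \bar c\partial - \mathbf L_0$, controlled via a bound on the operator norm $\|(1-\chi_n)\tilde\mW\|_{\mathcal L(\Lp^2)}$, which I expect to tend to $0$ as $n\to\infty$ because $\tilde\mW$ has a convolution structure against the asymptotically small quantity $S'(\mW\cdot\bar\mv + \theta)$ evaluated far from the wave core; then combine with \eqref{eq:bound-T0} to get $\omega_0(\mT_n)<0$; (ii) derive quasi-compactness from the negative growth bound plus a compactness argument on the complementary (compactly supported) part, or more directly from the fact that $\mA_n$ differs from the full $\mA$ by $\chi_n\tilde\mW$, a multiplication by a compactly supported kernel term, which is compact.

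\textbf{Step (i): the perturbation estimate.} First I would make precise the claim that $\|(1-\chi_n)\tilde\mW\|_{\mathcal L(\Lp^2)}\to 0$. Writing $\tilde\mW = \mathbf L_0 S'(\mW\cdot\bar\mv+\theta)\mW$, the factor $\mathbf L_0$ is a fixed bounded diagonal operator and $\mW$ is bounded on $\Lp^2$ by Young's inequality (Lemma~\ref{lem:T}), while $S'(\mW\cdot\bar\mv+\theta)$ is pointwise multiplication by a bounded function. The subtle point: $(1-\chi_n)$ multiplies on the \emph{left}, so $(1-\chi_n)\tilde\mW = \mathbf L_0\cdot\big[(1-\chi_n)S'(\mW\cdot\bar\mv+\theta)\big]\cdot\mW$, i.e. it is $\mathbf L_0$ times multiplication by the function $g_n(x) := (1-\chi_n(x))S'\big((\mW\cdot\bar\mv)(x)+\theta\big)$ times $\mW$. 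Since $\bar\mv(x)\to(u_i,v_i)$ as $x\to\pm\infty$ (the wave connects rest states), $\mW\cdot\bar\mv(x)$ converges to the corresponding argument of $S$, and because $\theta=(\theta_e,\theta_i)$ has been chosen so the rest state $(u_i,v_i)$ is a \emph{stable} equilibrium of the space-clamped system, the relevant eigenvalue condition forces... hmm, actually stability of the rest state is what would make $-\mathbf L_0 + \mathbf L_0 S'(\cdots)\mW$ have spectrum in $\{\Re < 0\}$ at infinity. So rather than needing $g_n\to 0$, I think the right statement is that the \emph{asymptotic operator} $\mathbf L_0^{-1}(-\mathbf L_0 + \lim_{n}\tilde\mW(1-\chi_n))$ inherits the dissipativity of the rest state, and one localizes: $\mA_n$ looks, far out, like the constant-coefficient operator $\bar c\partial - \mathbf L_0 + \mathbf L_0 S'(\text{rest})\mW$, whose essential spectrum is strictly to the left of the imaginary axis precisely by the ISN stability assumption on $(u_i,v_i)$. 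The cutoff $1-\chi_n$ kills the only region where the coefficient deviates from this asymptotic value. Concretely I would (a) show $\|g_n - g_\infty\|_\infty \to 0$ where $g_\infty$ is the (two-sided, possibly different on $\pm\infty$) limit; (b) handle the constant-coefficient limit operator by Fourier transform, getting $\omega_0 = \max_{k\in\R}\max\Sigma\big(i\bar c k\, \id - \mathbf L_0 + \mathbf L_0 S'(\text{rest})\widehat{\mW}(k)\big)$, which is $<0$ by the rest-state stability and the fact that $\widehat{K_k}(k)\to 0$ makes the large-$k$ behavior dominated by $-\mathbf L_0$; (c) absorb the $O(\|g_n-g_\infty\|_\infty)$ difference as a bounded perturbation, shrinking the bound and keeping it negative for $n$ large.

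\textbf{Step (ii): quasi-compactness.} Once $\omega_0(\mT_n)<0$ is established, quasi-compactness is almost immediate: take $\mathbf K = 0$ in the definition, since $\|\mT_n(t)\| \le M e^{\omega_0(\mT_n) t} \to 0$. (A semigroup with negative growth bound is trivially quasi-compact because it is norm-convergent to $0$.) So the last sentence of the lemma is a formality given the first.

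\textbf{Expected main obstacle.} The crux is Step (i)(b)–(c): pinning down that the constant-coefficient asymptotic operator is strictly dissipative. This is where the ISN hypothesis — that the chosen equilibrium $(u_i,v_i)$ is stable for the space-clamped system — must be used, and it is stated only informally in Section~\ref{section:nf} ("When $(u_3,v_3)$ is stable, the network is in an inhibitory stabilized regime"). One must translate "stable space-clamped equilibrium" (eigenvalues of $-\mathbf L_0 + \mathbf L_0 S'\,[a_{jk}]$ in $\{\Re<0\}$) into "$\bar c\partial$ plus that operator has essential spectrum in $\{\Re<0\}$", which requires controlling the $k$-dependence through $\widehat{\mW}(k)$; the monotone decay $\|\widehat{K_k}\|_\infty \le \|K_k\|_{\Lp^1}=1$ with $\widehat{K_k}(k)\to 0$ at infinity, together with $\Re(i\bar c k)=0$, should give a uniform negative bound, but making the "$\max$ over $k$ is attained and negative" argument rigorous (continuity + behavior at $k=\infty$) is the one genuinely technical piece. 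Everything else — Young's inequality, bounded perturbation of semigroups (\cite{engel_one-parameter_2000}[Theorem II.1.3]), the trivial quasi-compactness — is routine.
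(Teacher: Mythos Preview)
The paper's proof is far shorter than yours: it simply asserts that $\mB_n:=(1-\chi_n)\tilde\mW$ satisfies $\|\mB_n\|_{\mathcal L(\Lp^2)}\to 0$ as $n\to\infty$, invokes the bounded-perturbation estimate $\omega_0(\mT_n)\leq\omega_0(\mT_0)+\|\mB_n\|$ (Engel--Nagel, Theorem~III.1.3), and combines this with $\omega_0(\mT_0)\leq-\min(1,\tau^{-1})<0$ from Lemma~\ref{lem:T0} to conclude. Quasi-compactness is then read off from the negative growth bound, exactly as in your Step~(ii).

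You begin along the same lines but then --- rightly --- hesitate: since $S'$ evaluated at the rest-state argument is generically \emph{nonzero} (for the sigmoid it equals $\beta S(1-S)>0$ everywhere), the multiplier $(1-\chi_n)S'(\mW\cdot\bar\mv+\theta)$ does not tend to zero in $L^\infty$, and hence $\mB_n$ need not tend to zero in operator norm under Hypotheses~\ref{hyp:S}--\ref{hyp:W} alone. The paper's one-line claim is therefore not justified as written. Your alternative --- pass to the asymptotic constant-coefficient operator $\bar c\partial-\mathbf L_0+\mathbf L_0 S'(\text{rest})\mW$, establish via Fourier that its growth bound is negative, then absorb the $o(1)$ deviation $g_n-g_\infty$ as a small bounded perturbation --- is the standard repair. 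Be aware, though, that it relies on linear stability of the rest state for the \emph{spatially extended} problem (no Turing-type instability at intermediate wavenumbers), not merely for the space-clamped one; this is a genuine extra hypothesis beyond \ref{hyp:S}--\ref{hyp:W}. In summary: the paper's route is shorter but glosses over precisely the point you flagged; your route is longer but makes explicit the spectral assumption on the background state that is actually doing the work.
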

\begin{proof}
	The operator $\mathbf B_n:=(1-\chi_n)\tilde\mW$ is bounded on $\Lp^2$ with operator norm converging to zero as $n\to\infty$. From \cite{engel_one-parameter_2000}[Theorem III.1.3], $\mA_0+\mathbf B_n$ generates a semigroup with growth bound smaller than $\omega_0(\mT_0) + \norm{\mB_n}$. The growth bound estimate \eqref{eq:omega-Tn} follows easily from lemma~\ref{lem:T0}. By \cite{engel_one-parameter_2000}[proposition IV.2.10] and \cite{engel_one-parameter_2000}[proposition V.3.5], $(\mT_n(t))_{t\geq 0}$ is quasi-compact.
\end{proof}

\begin{proposition}\label{prop:linstabTW}
Let $(\bar\mv ,\bar c)$ be a TW solution to \eqref{eq:nfe-mK}.
Grant \ref{hyp:S}, \ref{hyp:barv} and \ref{hyp:W}. 
Then, the semigroup $(\mT(t))_{t\geq 0}$ is quasi-compact. As consequence, the set $\{\lambda\in \Sigma(\mA)\ |\ \Re\lambda \geq 0\}$ is finite (or empty) and consists of isolated poles of $R(\cdot, \mA)$ with finite algebraic multiplicity. The finite dimensional semigroup $\tilde\mT$ on these eigenspaces corresponding to these poles satisfies
\[
\norm{\mT(t) - \tilde\mT(t)}\leq M e^{-\epsilon t}
\]
for some $M\geq 1,\epsilon>0$.
\end{proposition}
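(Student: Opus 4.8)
The plan is to derive quasi-compactness of $(\mT(t))_{t\geq 0}$ from a perturbation argument relating it to the quasi-compact semigroup $(\mT_n(t))_{t\geq 0}$ of the previous lemma, and then to feed that into the general spectral theory of quasi-compact semigroups. First I would write $\mA = (\bar c\partial - \mathbf L_0 + (1-\chi_n)\tilde\mW) + \chi_n\tilde\mW = \mathbf A_n + \mathbf C_n$, where $\mathbf C_n := \chi_n\tilde\mW \in\mathcal L(\Lp^2)$. The key observation is that $\mathbf C_n$ is a \emph{compact} operator on $\Lp^2$: it is the composition of multiplication by the compactly supported cutoff $\chi_n$ with the convolution operator $\tilde\mW$ whose kernel entries lie in $\mathrm W^{1,1}$ by Hypothesis~\ref{hyp:W}; localizing a convolution with an $\mathrm W^{1,1}$ kernel to a bounded spatial window gives an operator that maps bounded sets of $\Lp^2$ into sets that are bounded in $\Hp^1$ and supported in a fixed compact interval, hence relatively compact in $\Lp^2$ by Rellich–Kondrachov. (One could alternatively approximate $\tilde\mW$ by smooth compactly supported kernels to make the compactness fully elementary.)

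Next I would invoke the stability of quasi-compactness under compact perturbation of the generator. Concretely, by Duhamel's (variation-of-constants) formula, $\mT(t) = \mT_n(t) + \int_0^t \mT_n(t-s)\,\mathbf C_n\,\mT(s)\,ds$, and since $\mathbf C_n$ is compact while $\mT_n(t-s)$ and $\mT(s)$ are bounded operators, the integrand is compact for each $s$ and the integral (a norm-limit of Riemann sums of compact operators) is compact. Therefore $\mT(t) - \mT_n(t)$ is compact for every $t\geq 0$, so that $\inf\{\|\mT(t)-\mathbf K\|\ :\ \mathbf K\text{ compact}\} \leq \inf\{\|\mT_n(t)-\mathbf K\|\ :\ \mathbf K\text{ compact}\}\to 0$ as $t\to\infty$ by \eqref{eq:omega-Tn}. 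Hence $(\mT(t))_{t\geq 0}$ is quasi-compact. Alternatively, one can cite directly the perturbation theorem for essential spectral radius (the essential growth bound is unchanged under compact perturbation of the generator, see \cite{engel_one-parameter_2000}), giving $\omega_{ess}(\mT) = \omega_{ess}(\mT_n) < 0$.

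Once quasi-compactness is in hand, the structural statements about the spectrum are standard and I would simply quote them from \cite{engel_one-parameter_2000}: for a quasi-compact semigroup the essential growth bound is negative, so for any $\varepsilon>0$ the part of $\Sigma(\mA)$ in $\{\Re\lambda \geq -\varepsilon\}$ (in particular in $\{\Re\lambda\geq 0\}$) is finite, consists of poles of the resolvent $R(\cdot,\mA)$ of finite algebraic multiplicity, and the corresponding spectral projection $\mathbf Q$ has finite rank. Decomposing $\mT(t) = \tilde\mT(t) + (\mathbf I-\mathbf Q)\mT(t)$ along the invariant splitting induced by $\mathbf Q$, the complementary part has growth bound strictly below $0$ — one may take any $-\epsilon$ with $\omega_{ess}(\mT) < -\epsilon < 0$ lying below all the real parts of the finitely many ``bad'' eigenvalues — which gives $\|\mT(t)-\tilde\mT(t)\| = \|(\mathbf I-\mathbf Q)\mT(t)\| \leq M e^{-\epsilon t}$ for suitable $M\geq 1$, and $\tilde\mT$ lives on the finite-dimensional range of $\mathbf Q$.

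The main obstacle is the compactness of the localized convolution operator $\mathbf C_n = \chi_n\tilde\mW$: one must check carefully that $\mathrm W^{1,1}$-regularity of the kernels (Hypothesis~\ref{hyp:W}) together with boundedness of $S'$ (Hypothesis~\ref{hyp:S}) really does produce an $\Hp^1_{loc}$ gain after multiplication by $\chi_n$, so that Rellich applies on the bounded support; the rest is bookkeeping with the perturbation and citation of \cite{engel_one-parameter_2000}. A secondary technical point is to ensure the neighborhood and constants in the exponential estimate can be chosen uniformly, which follows from the finiteness of the critical spectrum and the spectral mapping property on the finite-dimensional block.
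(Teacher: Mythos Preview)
Your proposal is correct and follows essentially the same route as the paper: decompose $\mA$ as the generator $\mA_n$ of the quasi-compact semigroup $(\mT_n(t))_{t\geq 0}$ plus the compact operator $\chi_n\tilde\mW$, use that compact perturbations preserve quasi-compactness, and then invoke the structure theorem for quasi-compact semigroups in \cite{engel_one-parameter_2000}. The only cosmetic differences are that the paper establishes compactness of $\chi_n\tilde\mW$ via the Kolmogorov--Riesz--Fr\'echet criterion (\cite{brezis_functional_2010}, Corollary~4.28) rather than your Rellich argument, and cites \cite{engel_one-parameter_2000}[Proposition~V.3.6, Theorem~V.3.7] directly in place of your explicit Duhamel computation.
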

\begin{proof}
	\rem{$\chi_n\mW$ est compact puis $\mathbf L_0S'(\cdot)\cdot\chi_n\mW=\mK_n$ est compact.}
	The operator $\mK_n := \chi_n\tilde\mW$ is bounded on $\Lp^2$. It is compact by \cite{brezis_functional_2010}[corollary 4.28]. The operator	$\mA = \mA_0+\mB_n+\mK_n$ thus generates a quasi-compact semigroup as consequence of \cite{engel_one-parameter_2000}[proposition V.3.6]. The rest of the lemma is \cite{engel_one-parameter_2000}[theorem V.3.7].
\end{proof}

\subsection{Reformulation as a DAE}
Next, we re-write the freezed problem \eqref{eq:pddae} near a traveling wave $(\bar\mv,\bar c)$. Using the ansatz, $\mv=\bar\mv+\sfv$, $\lambda = \bar c+\mu$, we express \eqref{eq:pddae} as
\begin{equation}\label{eq:pddae-linstab}
	\left\{
	\begin{array}{ccl}
		\frac{d\sfv}{dt} &=& \mA\sfv+\mu\partial\bar\mv+ \mG(\sfv,\mu;p)\\
		0 &=&  \langle\sfv, \partial\hat \mfu \rangle_{L^2}
	\end{array}\right.
\end{equation}
where $\mG(\sfv,\mu;p) := \mu\partial\sfv+\tilde \mF(\sfv;p)-\mB\cdot\sfv$ collects the quadratic terms and 
\begin{equation}\label{eq:tildeF}
\tilde \mF(\sfv;p) := \mF(\bar\mv+\sfv;p)-\mF(\bar\mv;p).
\end{equation}
We find that $\tilde\mF$ is regular on $\Lp^2$. 
\begin{lemma}\label{lemma:FCk}
	Grant \ref{hyp:S} and \ref{hyp:W}, then $\tilde\mF(\cdot;p)\in  C^r(\Lp^2,\Lp^2)$.
\end{lemma}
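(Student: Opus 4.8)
The plan is to reduce the claim to a smoothness statement about a shifted Nemytskii (superposition) operator and then to exploit the smoothing effect of the convolution kernels in one space dimension. Using \eqref{eq:nfe-mK} and \eqref{eq:tildeF}, write
\[
\tilde\mF(\sfv;p) = -\mathbf L_0\sfv + \mathbf L_0\bigl(\mathcal N(\mW\cdot\sfv)\bigr),\qquad \mathcal N(h) := S\bigl(\mW\cdot\bar\mv-\theta + h\bigr) - S\bigl(\mW\cdot\bar\mv-\theta\bigr).
\]
Since $\mathbf L_0$ is a bounded linear operator it suffices to show $\mathcal N\circ\mW\in C^r(\Lp^2,\Lp^2)$. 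First I would record that the convolution is smoothing: by Hypothesis~\ref{hyp:W} we have $K_k,\partial K_k\in\Lp^1$, so Young's inequality for convolutions gives $\mW\in\mathcal L(\Lp^2,\Hp^1)$; moreover in dimension one the Sobolev embedding yields $\Hp^1(\R)\hookrightarrow \Lp^q(\R)$ for every $q\in[2,\infty]$, in particular $\Hp^1\hookrightarrow\Lp^\infty$. Hence it is enough to prove that the shifted Nemytskii operator $\mathcal N$ is of class $C^r$ from $\Hp^1$ into $\Lp^2$, after which $\mathcal N\circ\mW$ is $C^r$ from $\Lp^2$ to $\Lp^2$ as a composition with a bounded linear map (the parameter $p$ being frozen throughout, joint regularity is not needed).

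For the smoothness of $\mathcal N:\Hp^1\to\Lp^2$ I would run the standard chain of Taylor estimates, the point being that every superlinear term that appears is absorbed using $\Hp^1\hookrightarrow\Lp^q$ for finite $q$. Writing $g_0:=\mW\cdot\bar\mv-\theta$, the candidate $j$-th differential for $1\le j\le r$ is the symmetric $j$-linear map $d^j\mathcal N(h)[k_1,\dots,k_j] = S^{(j)}(g_0+h)\,k_1\cdots k_j$; by the generalized Hölder inequality (exponents $2j$) and $\Hp^1\hookrightarrow\Lp^{2j}$ this is bounded, $\|d^j\mathcal N(h)\|\le \|S^{(j)}\|_\infty\,C_j^{\,j}$, uniformly in $h$ and independently of $g_0$ (we only use that $S$ and its derivatives up to order $r+1$ are bounded, Hypothesis~\ref{hyp:S}). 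The first-order remainder is $\mathcal N(h+k)-\mathcal N(h)-S'(g_0+h)k = \bigl(\int_0^1 (S'(g_0+h+tk)-S'(g_0+h))\,dt\bigr)k$, whose modulus is pointwise $\le \tfrac12\|S''\|_\infty|k|^2$, so its $\Lp^2$-norm is $\le\tfrac12\|S''\|_\infty\|k\|_{\Lp^4}^2 \le C\|k\|_{\Hp^1}^2 = o(\|k\|_{\Hp^1})$; the analogous integral-remainder computation for the increment of $d^{j}\mathcal N$, estimated by $\|S^{(j+1)}\|_\infty$ together with Hölder and $\Hp^1\hookrightarrow\Lp^q$, delivers the differentiability and the required Taylor expansions up to order $r$. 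Continuity of $h\mapsto d^j\mathcal N(h)$ then follows from $\|S^{(j)}(g_0+h)-S^{(j)}(g_0+h')\|_\infty\le\|S^{(j+1)}\|_\infty\|h-h'\|_\infty\le C\|h-h'\|_{\Hp^1}$ combined with the multiplier bound in $\Lp^2$.

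The main obstacle — and the reason the statement is not entirely routine — is that superposition operators on $\Lp^2(\R)$ built from a bounded, non-affine nonlinearity are in general not even Fréchet differentiable: the naive remainder estimate produces $\|k\|_{\Lp^4}^2$, which cannot be dominated by $\|k\|_{\Lp^2}^2$. The resolution is the observation above that $S$ is never evaluated at a generic $\Lp^2$ perturbation but always at $\mW\cdot\sfv$, and that $\mW$ lands in $\Hp^1$, which in one dimension embeds into every $\Lp^q$ with $q<\infty$; this is precisely what tames the higher powers of $k$ in all the Taylor remainders. The requirement $S\in C^{r+1}$ rather than merely $C^{r}$ in Hypothesis~\ref{hyp:S} enters here, to close the estimate for the top-order differential $d^r\mathcal N$.
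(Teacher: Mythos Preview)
Your proof is correct and follows essentially the same route as the paper: both arguments hinge on the observation that $\mW$ maps $\Lp^2$ into $\Hp^1$ (via Hypothesis~\ref{hyp:W} and Young's inequality), after which the superlinear Taylor remainders can be controlled using the good multiplicative behaviour of $\Hp^1$ in one dimension. The only cosmetic difference is that you invoke the Sobolev embedding $\Hp^1\hookrightarrow\Lp^{2j}$ together with H\"older's inequality, whereas the paper uses directly that $\Hp^1$ is a Banach algebra; your explicit factorization $\tilde\mF=-\mathbf L_0+\mathbf L_0\,\mathcal N\circ\mW$ and the accompanying discussion of why naive Nemytskii differentiability on $\Lp^2$ fails are a nice structural addition.
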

\begin{proof}
	See \ref{proof:lemma47}.
\end{proof}
We now adapt the strategy of \cite{thummler_numerical_2005} to transform the DAE \eqref{eq:pddae-linstab} into a Cauchy problem, \textit{i.e.} without the constraint.
Note that $\langle\mP\cdot\sfv,\partial\hat\mfu\rangle = 0$ for all $\sfv\in\Lp^2$ and $\mP\cdot\partial\bar\mv= 0$. 
 Differentiating the constraint in \eqref{eq:pddae-linstab} yields
\begin{equation}\label{eq:projected}
	\frac{d\sfv}{dt} = \mP\left(\mA\sfv+\mG(\sfv,\mu(\sfv);p) \right)
\end{equation}
with 
\begin{equation}
	\mu(\sfv) := -
	\frac{\langle\partial\hat\mfu, \bar c\partial\sfv+\tilde\mF(\sfv;p) \rangle}{\langle \partial\hat\mfu,\partial\bar\mv + \partial\sfv\rangle}.
\end{equation}
Using an integration by parts under Hypothesis~\ref{hyp:barv}, one can show that $\mu\in  C^r(\Lp^2,\R)$ using lemma~\ref{lemma:FCk}. However, we still face a difficulty: the nonlinear term $\mu(\sfv)\partial\sfv$ is defined on the domain $D(\mA)$ of $\mA$. This makes \eqref{eq:projected} a quasilinear problem which is not easy to handle \cite{pazy_semigroups_1983}. We thus look for a different formulation and rewrite \eqref{eq:projected} as
\begin{equation}\label{eq:cpmu}
	\frac{d\sfv}{dt} = \mP\left((\bar c+\mu(\sfv))\partial\sfv+\tilde\mF(\sfv;p) \right).
\end{equation}
Because we are interested in the local behavior near $\mv = 0$, we can use a time change \cite{grabosch_cauchy_1990,drogoul_exponential_2021} to get a semilinear formulation which is much easier to analyze:
\begin{equation}\label{eq:cpmu-tc}
	\frac{d\sfv}{dt} = \mP\left(\partial\sfv+\frac{1}{\bar c+\mu(\sfv)}\tilde\mF(\sfv;p) \right).
\end{equation}

\rem{Dans \cite{grabosch_cauchy_1990}, il faut prouver que $A_0+dB(0)$ est stable. on trouve $dB(0) = -\frac{d\gamma(0)}{\gamma(0)^2}F(0)+\frac{1}{\gamma(0)}dF(0)$}

\subsection{Principle of linearized stability for the DAE}
\begin{theorem}\label{thm:pstb-dae}
	Let us consider a traveling wave $(\bar\mv,\bar c)$.
	Grant \ref{hyp:S}, \ref{hyp:barv} and \ref{hyp:W}. 
	Assume that $\ker\mA = span\{\partial\bar\mv\}$. 
	Further assume that the non-zero eigenvalues are in the set $\{\Re<w\}$ for some $w<0$.
	Then $0$ in $\mathcal R(\mP)$ is locally exponentially stable for \eqref{eq:cpmu}.
\end{theorem}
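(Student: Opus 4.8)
The plan is to first establish local exponential stability of the equilibrium $\sfv=0$ for the semilinear, time–rescaled equation \eqref{eq:cpmu-tc}, and then transfer this to \eqref{eq:cpmu}. Linearising \eqref{eq:cpmu-tc} at $\sfv=0$: since $\tilde{\mathbf F}(0;p)=0$, the derivative at $0$ of $\sfv\mapsto(\bar c+\mu(\sfv))^{-1}\tilde{\mathbf F}(\sfv;p)$ equals $\bar c^{-1}d\tilde{\mathbf F}(0;p)=\bar c^{-1}\mathbf B$ (the contribution from differentiating $(\bar c+\mu(\sfv))^{-1}$ is killed by $\tilde{\mathbf F}(0;p)=0$), so the linearisation is $\sfv\mapsto\mathbf P(\partial\sfv+\bar c^{-1}\mathbf B\sfv)=\bar c^{-1}\mathbf P\mathbf A\,\sfv$ on $\mathcal R(\mathbf P)$ with domain $\mathcal Z=\mathbf P\Hp^1$, and \eqref{eq:cpmu-tc} reads $\dot\sfv=\bar c^{-1}\mathbf P\mathbf A\,\sfv+\mathbf N(\sfv)$ with $\mathbf N(\sfv):=\mathbf P\big((\bar c+\mu(\sfv))^{-1}\tilde{\mathbf F}(\sfv;p)\big)-\bar c^{-1}\mathbf P\mathbf B\,\sfv$. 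Using Lemma~\ref{lemma:FCk}, the $C^{r}$–regularity of $\mu$ on $\Lp^2$, and $\bar c+\mu(0)=\bar c>0$, the map $\mathbf N$ is $C^{r}$ on a neighbourhood of $0$ in $\mathcal R(\mathbf P)$ with values in $\mathcal R(\mathbf P)$, and $\mathbf N(0)=0$, $d\mathbf N(0)=0$. The crucial feature of the time change is that $\mathbf N$ is defined on the \emph{base space} $\mathcal R(\mathbf P)$, not merely on the domain, so \eqref{eq:cpmu-tc} is a semilinear problem with a locally Lipschitz, quadratically small nonlinearity.

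The heart of the proof is to show that $\bar c^{-1}\mathbf P\mathbf A$ generates an exponentially stable $C_0$–semigroup on $\mathcal R(\mathbf P)$, by comparing it with $\mathbf A$. By Proposition~\ref{prop:linstabTW} the semigroup of $\mathbf A$ is quasi-compact and $\Sigma(\mathbf A)\cap\{\Re\geq0\}=\{0\}$; together with $\ker\mathbf A=\mathrm{span}\{\partial\bar\mv\}$ and the assumption that $0$ is an algebraically simple eigenvalue of $\mathbf A$, this yields a splitting $\Lp^2=\ker\mathbf A\oplus X_1$ into closed $\mathbf A$–invariant subspaces with $X_1=\mathcal R(\mathbf A)$, $\Sigma(\mathbf A|_{X_1})=\Sigma(\mathbf A)\setminus\{0\}\subset\{\Re<w\}$, and, writing $\mathbf T(t)=\tilde{\mathbf T}(t)\oplus\mathbf T(t)|_{X_1}$ and using the estimate of Proposition~\ref{prop:linstabTW}, $\norm{\mathbf T(t)|_{X_1}}\leq M\mathrm e^{-\epsilon t}$ for some $\epsilon>0$. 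Since $\ker\mathbf P=\mathrm{span}\{\partial\bar\mv\}=\ker\mathbf A$ and $\langle\partial\bar\mv,\partial\hat\mfu\rangle\neq0$ (Hypothesis~\ref{hyp:barv}), the restriction $\mathbf J:=\mathbf P|_{X_1}\colon X_1\to\mathcal R(\mathbf P)$ is a bounded isomorphism; and using $\mathbf A\,\partial\bar\mv=0$ one checks $\mathbf A\mathbf P=\mathbf A$ on $X_1\cap\Hp^1$, hence $\mathbf P\mathbf A|_{\mathcal R(\mathbf P)}=\mathbf J\,(\mathbf A|_{X_1})\,\mathbf J^{-1}$, which also identifies $D(\mathbf P\mathbf A|_{\mathcal R(\mathbf P)})=\mathbf P(X_1\cap\Hp^1)=\mathcal Z$ (here one uses that $\partial\bar\mv$ is as smooth as needed, by bootstrapping the wave equation). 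Thus $\mathbf P\mathbf A|_{\mathcal R(\mathbf P)}$ is similar to $\mathbf A|_{X_1}$, so it generates the semigroup $\mathbf J\,\mathbf T(\cdot)|_{X_1}\,\mathbf J^{-1}$ of growth bound $\leq-\epsilon<0$; after the time rescaling, $\bar c^{-1}\mathbf P\mathbf A$ generates an exponentially stable $C_0$–semigroup with $\norm{\mathrm e^{t\mathbf P\mathbf A/\bar c}}\leq M'\mathrm e^{-\beta t}$, $\beta>0$.

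With an exponentially stable linear part and the quadratically small, base–space Lipschitz remainder $\mathbf N$, the classical principle of linearised stability for $C_0$–semigroups applies: writing the mild solution of \eqref{eq:cpmu-tc} via variation of constants, inserting $\norm{\mathrm e^{t\mathbf P\mathbf A/\bar c}}\leq M'\mathrm e^{-\beta t}$, choosing the ball $B_\rho=\{\norm\sfv\leq\rho\}$ small enough that $M'\,\mathrm{Lip}(\mathbf N|_{B_\rho})<\beta$, and applying Gronwall's inequality, one obtains that for $\sfv(0)\in\mathcal R(\mathbf P)$ small the solution is global, stays in $B_\rho$, and $\norm{\sfv(t)}\leq C\mathrm e^{-\beta' t}\norm{\sfv(0)}$ with $\beta'>0$. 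Finally I undo the time change: along such an orbit $\bar c+\mu(\sfv(\cdot))$ stays in a compact subset of $(0,\infty)$, so $\tau\mapsto t(\tau):=\int_0^\tau(\bar c+\mu(\sfv(s)))^{-1}\,ds$ is a bi-Lipschitz increasing bijection of $[0,\infty)$; its inverse $\tau(\cdot)$ turns $\sfv$ into a global solution $\mathsf w(t):=\sfv(\tau(t))$ of \eqref{eq:cpmu} with $\norm{\mathsf w(t)}=\norm{\sfv(\tau(t))}\leq C\mathrm e^{-\beta'\tau(t)}\norm{\mathsf w(0)}\leq C\mathrm e^{-\beta'' t}\norm{\mathsf w(0)}$ (as $\tau(t)\geq\bar c\,t/2$), which is the asserted local exponential stability of $0$ in $\mathcal R(\mathbf P)$ for \eqref{eq:cpmu}.

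The main obstacle is the spectral comparison of $\mathbf P\mathbf A$ with $\mathbf A$ in the second step. Because the phase condition uses an \emph{arbitrary} reference profile $\hat\mfu$ rather than an element of $\ker\mathbf A^{*}$, the projector $\mathbf P$ is oblique and does not commute with $\mathbf A$, so $\Sigma(\mathbf P\mathbf A)$ cannot be read off directly; one must ensure that the zero eigenvalue of $\mathbf A$ is algebraically (not merely geometrically) simple — otherwise the spectral complement $X_1$ does not match $\mathcal R(\mathbf P)$ under $\mathbf P$ and $0$ reappears in $\Sigma(\mathbf P\mathbf A|_{\mathcal R(\mathbf P)})$ — and one must treat the unboundedness of $\mathbf A$ and the domain identification carefully inside the similarity $\mathbf J$. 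The remaining ingredients (regularity of $\mathbf N$, the Gronwall estimate, and the time change) are routine.
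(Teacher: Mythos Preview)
Your proof is correct and follows the same overall strategy as the paper: pass to the time–rescaled semilinear equation \eqref{eq:cpmu-tc} and reduce to exponential stability of the linearisation $\dot\sfv=\mathbf P\mathbf A\sfv$ on $\mathcal R(\mathbf P)$. The executions differ in two places. For the nonlinear step, the paper simply invokes \cite{grabosch_cauchy_1990}[Theorem~5.1] (a black-box principle of linearised stability for the quasilinear problem via the time change, together with a cutoff to globalise $(\bar c+\mu)^{-1}$), whereas you spell out the variation-of-constants/Gronwall argument and the explicit inversion of the time change; both are fine. For the linear step, the paper takes a shorter route: from the quasi-compact decomposition $\mathbf T(t)=\mathbf P_0+\mathbf R(t)$ with $\mathbf P_0$ the spectral projector onto $\ker\mathbf A$ and $\|\mathbf R(t)\|\le Me^{wt}$, it observes $\mathbf P\mathbf P_0=0$ and concludes $\|\mathbf P\mathbf T(t)\mathbf P\|\le Me^{wt}$ --- implicitly using that the semigroup of $\mathbf P\mathbf A$ on $\mathcal R(\mathbf P)$ is $\mathbf P\mathbf T(t)|_{\mathcal R(\mathbf P)}$ (which holds because the rank-one correction $(I-\mathbf P)\mathbf A$ integrates to a multiple of $\partial\bar\mv$, killed by $\mathbf P$). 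Your explicit similarity $\mathbf P\mathbf A|_{\mathcal R(\mathbf P)}=\mathbf J(\mathbf A|_{X_1})\mathbf J^{-1}$ is a more constructive way to reach the same conclusion. Your remark that algebraic (not merely geometric) simplicity of the zero eigenvalue is required is well taken: the paper's identity $\mathbf P\mathbf P_0=0$ (and the very writing $\mathbf T(t)=\mathbf P_0+\mathbf R(t)$ with constant $\mathbf P_0$) also relies on it, so this is a hypothesis both arguments need.
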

\begin{proof}
	$\tilde F$ and $\mu$ are Frechet differentiable on $\Lp^2$ by lemma~\ref{lemma:FCk}. The theorem is then a direct consequence of \cite{grabosch_cauchy_1990}[Theorem 5.1] which states a principle of linearized stability for the quasilinear formulation \eqref{eq:cpmu}, this is proved using the time change conversion to the semilinear formulation \eqref{eq:cpmu-tc}. In practice, one has to introduce a cutoff function $\rho:\mathbb R\to\mathbb R^+_*$ which is the identity map near $\bar c$ to ensure that the time change does not change the dynamics, see \cite{drogoul_exponential_2021} for more details.
	The only remaining point is to show that $0$ is exponentially stable for the linear equation $\frac{d}{dt}\sfv=\mP\mA\sfv$. 
	
	Let us denote by $\mP_0$ the spectral projector on $\ker(\mA)=span(\partial\bar\mv)$. 
	We recall that $(\mT(t))_{t\geq 0}$ is the semigroup generated by $\mA$ (see lemma~\ref{lem:T}).
	By \cite{engel_one-parameter_2000}[theorem~V.3.1], one has
	\[
	\mT(t) =  \mP_0 + \mR(t)
	\]
	where $\norm{\mR(t)}\leq Me^{wt}$ for some $M>0$ and $0>w>\sup\Sigma(\mA)\setminus\{0\}$. One then gets
	$\mP\mT(t)\mP = \mP \mP_0\mP+\mP\mR(t)\mP$. From $ \mP \mP_0=0$, we obtain $\norm{\mP\mT(t)\mP}\leq Me^{wt}$ which concludes the proof.
\end{proof}

\subsection{Proof of theorem~\ref{th:linstab}}
Using the above theorem, we can deduce the exponential orbital stability of the wave $\bar\mv$ in $\Lp^2$. Indeed, it implies that $\int_0^t\mu(\sfv(s))ds$ converges as $t\to\infty$.

\section{Bifurcation theory of traveling waves}\label{section:cm}

Using the analysis of section~\ref{section:nlstat}, we obtain the equivalent equation \eqref{eq:projected} of the DAE \eqref{eq:pddae-linstab} for $\mv,\lambda$ small. We introduce the spaces $\mathcal X:=\mP\Lp^2=\mathcal Y$, $\mathcal Z=\mP\Hp^1$, the linear operator $\mA_\mP:=\mP\mA$ with domain $\mathcal Z$ and the nonlinearity
\[
\mR(\sfv;p):=\mP\mG(\sfv,\mu(\sfv);p).
\]
We rewrite \eqref{eq:projected} as 
\begin{equation}\label{eq:ioossform}
	\frac{d\sfv}{dt}=\mA_\mP\sfv+\mR(\sfv;p).
\end{equation}
We note that the effect of the projector $\mP$ is to remove the eigenvector $\partial\bar\mv$ from the kernel of $\mA$.
\begin{lemma}\label{lemma:sigmaAp}
	The spectrum of $\mA_\mP$ is the same of that of $\mA$ except at zero: $\rho(\mA)\setminus\{0\}=\rho(\mA_\mP)\setminus\{0\}$. The effect of the projector $\mP$ is to remove the eigenvector $\partial\bar\mv$ from the kernel $\ker(\mA)$.
\end{lemma}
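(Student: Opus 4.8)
Throughout write $Q:=\id-\mP$, the rank-one projector onto $E:=\ker\mP=\mathrm{span}\{\partial\bar\mv\}$, and recall that, under Hypotheses \ref{hyp:S},\ref{hyp:barv},\ref{hyp:W}, one has $\partial\bar\mv\in\Hp^1=D(\mA)$ and $\mA\,\partial\bar\mv=0$ (differentiate the wave equation \eqref{eq:wave} in $\xi$; that $\partial^2\bar\mv\in\Lp^2$ follows from $\partial\bar\mv=-\bar c^{-1}\mF(\bar\mv;p)$ and $d\mF(\bar\mv;p)=\mB\in\mathcal L(\Lp^2)$). The plan is to turn the presence of the one–dimensional $\mA$-invariant subspace $E$ into a block–triangular decomposition of $\mA$ and then transfer the resolvent set. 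First I would record the direct-sum structure: since $\sfv\mapsto\langle\sfv,\partial\hat\mfu\rangle_{\Lp^2}$ is bounded on $\Lp^2$ (Hypothesis~\ref{hyp:barv} gives $\hat\mfu\in\Hp^2$), hence on $\Hp^1\hookrightarrow\Lp^2$, the idempotent $\mP$ is bounded on both $\Lp^2$ and $\Hp^1$, giving topological direct sums $\Lp^2=E\oplus\mathcal X$ and $\Hp^1=E\oplus\mathcal Z$ with $\mathcal X=\mP\Lp^2$, $\mathcal Z=\mP\Hp^1=D(\mA_\mP)$. In particular the domain $D(\mA)=\Hp^1$ splits compatibly with the target splitting of $\Lp^2$.

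Next I would exhibit the block form. Because $\mA$ vanishes on $E$, writing operators in $2\times2$ blocks with respect to $(E,\mathcal X)$ on the target and $(E,\mathcal Z)$ on the source yields
\[
\mA \;=\; \begin{pmatrix} 0 & Q\mA|_{\mathcal Z}\\ 0 & \mA_\mP \end{pmatrix},
\]
where $Q\mA|_{\mathcal Z}\in\mathcal L(\mathcal Z,E)$ and the $(2,2)$ corner is exactly $\mP\mA|_{\mathcal Z}=\mA_\mP$. For $z\in\mathbb C\setminus\{0\}$ the $(1,1)$ corner $z\,\id_E$ is boundedly invertible on the finite-dimensional $E$, so a direct check shows $z\id-\mA$ is a bijection $\Hp^1\to\Lp^2$ with bounded inverse iff $z\id-\mA_\mP$ is a bijection $\mathcal Z\to\mathcal X$ with bounded inverse; concretely, when the latter holds,
\[
(z\id-\mA)^{-1} \;=\; \begin{pmatrix} z^{-1}\id_E & z^{-1}Q\mA\,(z\id-\mA_\mP)^{-1}\\ 0 & (z\id-\mA_\mP)^{-1}\end{pmatrix},
\]
which one verifies maps $E\oplus\mathcal X$ into $E\oplus\mathcal Z=\Hp^1$ and composes to the identity on either side (using that $Q\mA|_{\mathcal Z}$ takes values in $E$); conversely $\mP\,R(z,\mA)|_{\mathcal X}$ is a bounded inverse of $z\id-\mA_\mP$. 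Hence $z\in\rho(\mA)\iff z\in\rho(\mA_\mP)$ for $z\neq0$, i.e. the resolvent sets (equivalently, the spectra) of $\mA$ and $\mA_\mP$ coincide off $0$.

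For the last assertion: since $E\cap\mathcal Z=\{0\}$, the translation eigenvector $\partial\bar\mv$ does not even lie in $\mathcal Z=D(\mA_\mP)$, so $\mP$ has removed it from play. If moreover $\ker\mA=\mathrm{span}\{\partial\bar\mv\}$ and $0$ is a simple pole of $R(\cdot,\mA)$ — which holds under the hypotheses of Theorem~\ref{th:linstab} by Proposition~\ref{prop:linstabTW} — then in fact $0\in\rho(\mA_\mP)$: injectivity, because $\mP\mA w=0$ with $w\in\mathcal Z$ forces $\mA w\in\ker\mP=\ker\mA$, hence $w\in\ker\mA^2=\ker\mA=E$ (ascent one), so $w=0$; surjectivity, because for $g\in\mathcal X$ one writes $g=(\id-\mP_0)g+\mP_0 g$ with $\mP_0$ the Riesz projection of $\mA$ at $0$ (so $\mathcal R(\mP_0)=\ker\mA=E$ and $\ker\mP_0=\mathcal R(\mA)$), solves $\mA w_0=(\id-\mP_0)g$, replaces $w_0$ by $w:=\mP w_0\in\mathcal Z$ (which still satisfies $\mA w=(\id-\mP_0)g$ since $\mA$ kills the $E$-part of $w_0$), and gets $\mP\mA w=\mP(\id-\mP_0)g=\mP g-\mP\mP_0 g=g$ because $\mP g=g$ and $\mP_0 g\in E=\ker\mP$; boundedness of the inverse follows from the closed graph theorem applied to $\mA$ restricted to the closed $\mA$-invariant subspace $\ker\mP_0$.

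The arguments are essentially bookkeeping rather than hard analysis; the only points needing care are the domain compatibility — checking that $\mP$ is bounded on $\Hp^1$ so that $D(\mA)$ splits as $E\oplus\mathcal Z$ and the block-triangular inverse formula actually lands in $\Hp^1$ — and the observation that $z=0$ must genuinely be excluded from the first statement, since there the corner $z\,\id_E$ degenerates; recovering information at $0$ itself is precisely where the extra simple-pole input from Proposition~\ref{prop:linstabTW} is used.
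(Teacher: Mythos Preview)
Your proof is correct and conceptually clean. It differs in presentation from the paper's argument, which instead phrases the question for $z\neq0$ as the bordered linear system
\[
(z\id-\mA)\mv-\mu\,\partial\bar\mv=\mP\mr,\qquad \langle\partial\hat\mfu,\mv\rangle=0,
\]
solves first for $\mv$ using $R(z,\mA)$ and then eliminates $\mu$ via the constraint, arriving at an explicit resolvent formula $R(z,\mA_\mP)=\mQ_zR(z,\mA)\mP$ with a rank-one correction $\mQ_z$; the reverse inclusion is then declared straightforward. Your block-triangular decomposition with respect to $E\oplus\mathcal X$ is really the same computation organized differently: the bordered solve is exactly the back-substitution in your $2\times2$ triangular inverse. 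What your route buys is symmetry---both inclusions $\rho(\mA)\setminus\{0\}\subset\rho(\mA_\mP)\setminus\{0\}$ and its converse drop out of the single block formula at once, whereas the paper treats them asymmetrically---and a more transparent reason why $z=0$ is excluded (degeneration of the $z\,\id_E$ corner). The paper's route, in return, yields the explicit operator $\mQ_z$ and the identity $z\langle\partial\hat\mfu,R(z,\mA)\partial\bar\mv\rangle=\langle\partial\hat\mfu,\partial\bar\mv\rangle$, which can be handy elsewhere. Your additional paragraph showing $0\in\rho(\mA_\mP)$ under the simple-pole hypothesis goes beyond what the lemma asserts; the paper does not prove this here, though it is implicitly used later.
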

\begin{proof}
	Recall that to characterize the spectrum, we have to check existence and continuity of the resolvent.
	For convenience, let $\psi:=\partial\hat\mfu$ and $\phi:=\partial\bar\mv$ so that $\mP\cdot \sfv = \sfv-\frac{  \langle\sfv,\psi \rangle_{L^2}}{\langle\phi, \psi \rangle_{L^2}}\phi$. We compute the resolvent set of $\mA_\mP$. 
	
	For $z\neq 0$, solving $(zI-\mA_\mP)\mv=\mP\cdot\mr$ with $\mu = -\langle \psi,\phi\rangle^{-1}\langle \psi,\mA\mv\rangle$ is equivalent to solving in $(\mv,\mu)$ the bordered problem
	\begin{equation}\label{eq:borderedLS}
		\left\{
		\begin{aligned}
			(z\id-\mA)\mv-\mu\phi&=\mP\cdot\mr \\
			\langle\psi, \mv\rangle & =0.
		\end{aligned}\right.
	\end{equation}
	Following \cite{thummler_numerical_2005}[lemma 1.21], for $z\in\rho(\mA)\setminus\{0\}$, we can solve  \eqref{eq:borderedLS} as follows
	\[
	\mv = R(z,\mA)\left(\mP\cdot\mr+\mu\phi\right).
	\]
	The constraint $\langle \psi,\mv\rangle=0$ gives $\mu=-\langle \psi,R(z,\mA)\phi\rangle^{-1}\langle \psi,R(z,\mA)\mP\cdot\mr\rangle$ so that $\mv=\mQ_z R(z,\mA)\mP\cdot\mr$ with 
	\[
	\mQ_z\mv := \mv - \frac{\langle\psi,\mv\rangle}{\langle \psi,R(z,\mA)\phi\rangle}R(z,\mA)\phi.
	\]
	To simplify the expression, we expand $R(z,\mA)\left(z\id-\mA\right)\phi=\phi$ which gives, as $\phi\in\ker(\mA)$, $z\langle \psi,R(z,\mA)\phi\rangle =\langle\psi,\phi\rangle\neq 0$ by hypothesis~\ref{hyp:barv}. Hence, if $z\neq 0$, we find
	\begin{equation}\label{eq:Q}
		\mQ_z\mv := \mv - z\frac{\langle\psi,\mv\rangle}{\langle \psi,\phi\rangle}R(z,\mA)\phi.
	\end{equation}
	The continuity of $\mQ_z R(z,\mA)\mP$ implies that $z\in\rho(\mA_\mP)\setminus\{0\}$. 
	It is straightforward to check the other inclusion $\rho(\mA_\mP)\setminus\{0\}\subset \rho(\mA)\setminus\{0\}$.
	This concludes the proof of the lemma.
\end{proof}

\subsection{Proof of theorem~\ref{thm:CM}}
\begin{proof}	
	This is a direct consequence of \cite{haragus_local_2011}[theorem II.3.3]. 
	We first observe that $\mA_\mP\in\mathcal L(\mathcal Z, \mathcal X)$. By lemma~\ref{lemma:sigmaAp} and proposition~\ref{prop:linstabTW}, the spectrum $\Sigma(\mA_\mP)$ is discrete near $\i\R$, there is $\gamma>0$ such that $\Sigma(\mA)\setminus\i\R$ is included in $\{z\ :\ \Re(z)<-\gamma\}$.
	
	Next, we note that $\mR\in C^r(\Hp^1,\Lp^2)$ for $r\geq 2$ by lemma~\ref{lemma:FCk}; this implies that $\mR\in C^r(\mathcal Z,\mathcal Y)$. One also has $\mR(0;0)=0$, $d\mR(0;0)=0$. The only point left to prove are resolvent estimates, see \cite{haragus_local_2011}[theorem~2.20] as provided by the next lemma.
	
	\begin{lemma} 
	There are $\omega_0>0, M>0$ such that for all $\omega\in\R$ with $|\omega|>\omega_0$, we have
	\[
	\norm{\left(\i\omega \id-\mA_\mP\right)^{-1}}_{\mathcal L(\mathcal X)}\leq \frac{M}{|\omega|}.
	\]
	\end{lemma}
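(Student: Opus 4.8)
The plan is to prove the resolvent estimate for $\mA_\mP$ by reducing it to the corresponding estimate for $\mA$ itself, exploiting the explicit formula \eqref{eq:Q} obtained in lemma~\ref{lemma:sigmaAp}. First I would establish the estimate for $\mA = \bar c\partial + \mB$: writing $\i\omega\id - \mA = (\i\omega\id - \bar c\partial) - \mB$, the operator $\bar c\partial$ generates the translation group, whose resolvent on $\Lp^2$ satisfies $\norm{(\i\omega\id - \bar c\partial)^{-1}}_{\mathcal L(\Lp^2)} = |\bar c\omega|^{-1}$ by an elementary Fourier (or Plancherel) computation — in fact $(\i\omega - \i\bar c k)^{-1}$ has modulus bounded by $|\bar c|^{-1}|\omega - \bar c k|^{-1}$ and this is maximized poorly... more carefully, the resolvent of the generator of a bounded $C_0$-group along the imaginary axis decays like $1/|\omega|$. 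Since $\mB\in\mathcal L(\Lp^2)$ is bounded (lemma~\ref{lem:T}), a Neumann series argument gives, for $|\omega|$ large enough that $\norm{(\i\omega\id-\bar c\partial)^{-1}}\,\norm{\mB} \leq 1/2$,
\[
\norm{(\i\omega\id - \mA)^{-1}}_{\mathcal L(\Lp^2)} \leq \frac{\norm{(\i\omega\id-\bar c\partial)^{-1}}}{1 - \norm{(\i\omega\id-\bar c\partial)^{-1}}\norm{\mB}} \leq \frac{2}{|\bar c\omega|} \leq \frac{M_1}{|\omega|}.
\]

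Next I would transfer this to $\mA_\mP$ using the identity from the proof of lemma~\ref{lemma:sigmaAp}, namely that for $z = \i\omega$ with $\omega\neq 0$ one has $(\i\omega\id - \mA_\mP)^{-1}\mP = \mQ_{\i\omega} R(\i\omega,\mA)\mP$ with
\[
\mQ_{\i\omega}\sfv = \sfv - \i\omega\,\frac{\langle\psi,\sfv\rangle}{\langle\psi,\phi\rangle}\,R(\i\omega,\mA)\phi,
\]
where $\psi = \partial\hat\mfu\in\Hp^1$ (hypothesis~\ref{hyp:barv}) and $\phi = \partial\bar\mv$. Since $\phi = \partial\bar\mv \in \ker\mA$, we have $R(\i\omega,\mA)\phi = (\i\omega)^{-1}\phi$, so the correction term simplifies drastically: $\i\omega\, R(\i\omega,\mA)\phi = \phi$, hence $\mQ_{\i\omega}\sfv = \sfv - \frac{\langle\psi,\sfv\rangle}{\langle\psi,\phi\rangle}\phi = \mP\sfv$ is in fact independent of $\omega$ and equals $\mP$ itself. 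Therefore on $\mathcal X = \mP\Lp^2$ we get simply $(\i\omega\id-\mA_\mP)^{-1} = \mP\, R(\i\omega,\mA)|_{\mathcal X}$, and since $\mP$ is a bounded projector,
\[
\norm{(\i\omega\id-\mA_\mP)^{-1}}_{\mathcal L(\mathcal X)} \leq \norm{\mP}_{\mathcal L(\Lp^2)}\,\norm{R(\i\omega,\mA)}_{\mathcal L(\Lp^2)} \leq \frac{\norm{\mP}\,M_1}{|\omega|} =: \frac{M}{|\omega|}
\]
for all $|\omega| > \omega_0$, which is the claimed bound.

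The main obstacle I anticipate is making the first step — the $1/|\omega|$ resolvent bound for $\bar c\partial$ on $\Lp^2$ — fully rigorous with the correct constant, since one must be careful that the translation semigroup is isometric (not merely bounded) so that its generator's resolvent genuinely decays along $\i\R$; this is where Plancherel's theorem on $\Lp^2(\R,\R^2)$ is invoked, identifying $\widehat{\bar c\partial f}(k) = \i\bar c k\,\hat f(k)$ and hence $\norm{R(\i\omega,\bar c\partial)}_{\mathcal L(\Lp^2)} = \sup_{k\in\R}|\i\omega - \i\bar c k|^{-1}$, which is unbounded — so in fact the naive estimate fails pointwise and one must instead use that $R(\i\omega,\bar c\partial) = \int_0^\infty e^{-\i\omega t}\mT_{\mathrm{tr}}(t)\,dt$ does not decay. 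The correct route is thus through the semilinear structure already exploited in the paper: $\mA_\mP$ generates a quasi-compact semigroup with spectrum bounded away from $\i\R$ at infinity, and one applies the standard fact (e.g. \cite{haragus_local_2011}[Lemma 2.20] or Hille–Yosida type estimates for generators of bounded analytic-like corrections) that a sectorial-at-infinity operator on a Hilbert space whose spectrum avoids a neighborhood of $\i\R$ beyond $|\omega_0|$ satisfies such a bound; alternatively, since the constraint removes the translation direction, $\mA_\mP$ becomes the generator of an exponentially decaying semigroup modulo a compact perturbation, and the resolvent identity $\i\omega R(\i\omega,\mA_0) = \id + R(\i\omega,\mA_0)\mathbf L_0$ for $\mA_0 = \bar c\partial - \mathbf L_0$ (lemma~\ref{lem:T0}), combined with $\norm{R(\i\omega,\mA_0)}\to 0$... — this last point, obtaining decay of $\norm{R(\i\omega,\mA_0)}$, is the genuinely delicate ingredient and I would handle it by writing $\mA = \mA_0 + \tilde\mW$ with $\mA_0$ generating a semigroup of negative growth bound (lemma~\ref{lem:T0}) plus the compact piece, reducing to a finite-dimensional residue where the $1/|\omega|$ decay is automatic.
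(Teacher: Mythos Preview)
Your reduction from $\mA_\mP$ to $\mA$ via lemma~\ref{lemma:sigmaAp} is correct and clean --- the observation that $\mQ_{\i\omega} = \mP$ (because $R(\i\omega,\mA)\phi = (\i\omega)^{-1}\phi$ for $\phi\in\ker\mA$, so the $\omega$-dependence of the correction term cancels) is more explicit than the paper's own remark that ``the case of $\mA_\mP$ is very similar at the price of more complicated notations.'' The genuine gap is in the $1/|\omega|$ bound for $\mA$ itself: your Neumann-series approach starting from $\bar c\partial$ fails (as you correctly note, $\i\omega\in\Sigma(\bar c\partial)$ so $R(\i\omega,\bar c\partial)$ does not even exist), and the last paragraph never arrives at a concrete mechanism producing decay. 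In fact the Fourier computation you hint at, carried out for $\mA_0 = \bar c\partial - \mathbf L_0$, gives $\norm{R(\i\omega,\mA_0)}_{\mathcal L(\Lp^2)} = \sup_{k\in\R}\norm{(\i(\omega-\bar c k)\id + \mathbf L_0)^{-1}} = \max(1,\tau)$, \emph{independent of $\omega$}; a bounded perturbation by $\tilde\mW$ cannot manufacture $1/|\omega|$ decay from a constant.

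The paper's route is different and much shorter: it does not decompose $\mA$ at all, but observes that $\mA = \bar c\partial + \mB$ generates a strongly continuous \emph{group} on $\Lp^2$ (the translation group is isometric and $\mB\in\mathcal L(\Lp^2)$) with $\norm{\mT_\mA(t)}_2 \leq e^{\norm{\mB}\,|t|}$ for all $t\in\R$, and then invokes the Hille--Yosida resolvent estimate for group generators (Engel--Nagel, Theorem~II.3.11) to conclude $\norm{(\i\omega\id-\mA)^{-1}} \leq (|\omega|-\norm{\mB})^{-1}$ for $|\omega| > \norm{\mB}$. You should be aware, though, that the textbook Hille--Yosida bound for a group with growth $e^{w|t|}$ controls the resolvent on $\{|\Re\lambda| > w\}$, not on the imaginary axis --- so your Fourier-analytic misgiving about mere boundedness (rather than decay) along $\i\R$ is not misplaced, and the paper's invocation here deserves the same scrutiny you applied to your own first attempt.
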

	\begin{proof}
	Let us prove the estimate for $\mA$ on $\Lp^2$, the case of $\mA_\mP$ is very similar at the price of more complicated notations. It generates the strongly continuous group 
	$\mT_\mA(t)\mv = e^{\mB t}\mv(x +\bar c t)$ with bound $\norm{\mT_\mA(t)}_2\leq e^{\norm{\mB}\cdot |t|}$. From \cite{engel_one-parameter_2000}[Theorem~II.3.11-c], we have for all $|\omega|>\norm{\mB}$:
	\[
	\norm{\left(\i\omega \id-\mA\right)^{-1}}_{\mathcal L(\Lp^2)}\leq \frac{1}{|\omega|-\norm{B}}.
	\]
	 The lemma follows using $M=\frac12$ and $\omega_0 = 2\norm{B}$.
	\end{proof}
	This concludes the proof of the theorem.
\end{proof}

\section{Spectral analysis of modulated traveling waves}\label{section:mtp-nlstab}
Let $\tilde\mv$ be a MTW defined as a periodic solution to the Cauchy problem \eqref{eq:mtw_ode}, it is natural to introduce the linearized problem \eqref{eq:linper} around $\tilde\mv$.
\subsection{Floquet multipliers}
We describe the Floquet multipliers in infinite dimensions.
Given a MTW $\tilde\mv$ of period $T$, we study \eqref{eq:linper}. We have
\begin{equation}
	\label{eq:nonauto}
	\mA(t) = \bar c\partial -\mathbf L_0 + \mathbf L_0S'\left(\mW\cdot \tilde\mv(t)+\theta\right)\mW:= \bar c\partial -\mathbf L_0+\tilde\mW(t).
\end{equation}
We note that $\mA(\cdot)$ is $T$ periodic and $D(\mA(t)) = \Hp^1$ is independent of $t$. We first show that the non-autonomous linear problem \eqref{eq:linper}.

\begin{lemma}\label{lemma:evolfamily}
	Assume that $\tilde\mv\in  C^0(\R,\Lp^2)$ and grant \ref{hyp:W}. 
	Then, there is a unique exponentially bounded evolution family $(\mathbf U(t, s))_{ t\geq s}$ on $\Lp^2$ solution to \eqref{eq:linper}. More precisely, \eqref{eq:linper} with initial condition $\mv(s) = \mv_s\in\Lp^2$, has a unique solution given by $\mv(t) = \mathbf U(t,s)\mv_s$ and there are $M_{\mathbf U}\geq1, w\in\R$ such that for all $t\geq s$, 
	\begin{equation}\label{eq:bound-U}
	\norm{\mathbf U(t,s)}_{\mathcal L(\Lp^2)}\leq M_{\mathbf U}e^{w(t-s)}.	
	\end{equation}
\end{lemma}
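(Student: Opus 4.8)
The strategy is to view \eqref{eq:linper} as a bounded, time-periodic perturbation of the constant-coefficient generator $\mA_0 = \bar c\partial - \mathbf L_0$ studied in Lemma~\ref{lem:T0}. First I would record that, by Hypothesis~\ref{hyp:W} and Young's inequality for convolutions (exactly as in Lemma~\ref{lem:T}), the operators $\mW$, hence $\tilde\mW(t) = \mathbf L_0 S'(\mW\cdot\tilde\mv(t)+\theta)\mW$, lie in $\mathcal L(\Lp^2)$; moreover the map $t\mapsto \tilde\mW(t)$ is strongly continuous (in fact norm-continuous, since $S'$ is bounded and $\tilde\mv\in C^0(\R,\Lp^2)$ composed with the Nemytskii argument is continuous) and $T$-periodic, with $\sup_{t}\norm{\tilde\mW(t)}_{\mathcal L(\Lp^2)} =: b < \infty$ because $\tilde\mv$ is continuous on the compact period $[0,T]$. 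So $\mA(t) = \mA_0 + \tilde\mW(t)$ with $\mA_0$ the generator of the translation-damping semigroup $\mT_0$ and $\tilde\mW(\cdot)$ a continuous bounded perturbation on a fixed domain $D(\mA(t))=\Hp^1$ independent of $t$.

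Next I would invoke the standard theory of linear non-autonomous Cauchy problems under bounded perturbations of a fixed generator (e.g. Pazy, \emph{Semigroups of Linear Operators}, Ch.~5, or the treatment in Engel--Nagel \cite{engel_one-parameter_2000}, Ch.~VI): since $\mA_0$ generates a $C_0$-semigroup and $t\mapsto\tilde\mW(t)\in\mathcal L(\Lp^2)$ is strongly continuous, the family $(\mA(t))_{t\ge0}$ generates a unique strongly continuous evolution family $(\mathbf U(t,s))_{t\ge s}$ on $\Lp^2$, characterized by the integral (mild) equation
\[
\mathbf U(t,s)\mv_s = \mT_0(t-s)\mv_s + \int_s^t \mT_0(t-r)\,\tilde\mW(r)\,\mathbf U(r,s)\mv_s\,dr,
\]
which is solved by a Picard iteration / Dyson series that converges in $\mathcal L(\Lp^2)$. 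Uniqueness of the solution of \eqref{eq:linper} with datum $\mv(s)=\mv_s$ follows from Gronwall applied to the difference of two solutions. For the exponential bound \eqref{eq:bound-U}, I would combine the decay estimate $\norm{\mT_0(t)}_2\le e^{-\min(1,\tau^{-1})t}$ from Lemma~\ref{lem:T0} with the above integral identity: setting $\phi(t):=e^{\min(1,\tau^{-1})(t-s)}\norm{\mathbf U(t,s)\mv_s}_2$, one gets $\phi(t)\le \norm{\mv_s}_2 + b\int_s^t\phi(r)\,dr$, and Gronwall yields $\norm{\mathbf U(t,s)}_{\mathcal L(\Lp^2)}\le e^{(b-\min(1,\tau^{-1}))(t-s)}$, i.e. \eqref{eq:bound-U} with $M_{\mathbf U}=1$ and $w=b-\min(1,\tau^{-1})$.

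The only genuine subtlety — and the step I expect to need the most care — is verifying that the abstract perturbation theorem actually applies in the form stated, namely that the evolution family is genuinely strongly continuous jointly in $(t,s)$ and that the common domain condition $D(\mA(t))=\Hp^1$ together with strong continuity of $t\mapsto\tilde\mW(t)$ suffices (rather than needing norm-continuity or differentiability, which would be required for a classical, as opposed to mild, solution). Since $\tilde\mW(\cdot)$ is in fact norm-continuous here, all versions of the theorem apply and no delicacy arises; I would simply point to \cite{pazy_semigroups_1983}[Theorem 5.1.2] or \cite{engel_one-parameter_2000}[Ch.~VI] and remark that periodicity of $\mA(\cdot)$ gives the cocycle relation $\mathbf U(t+T,s+T)=\mathbf U(t,s)$, which is what makes $\mathbf S:=\mathbf U(T,0)$ the relevant monodromy operator for the subsequent Floquet analysis.
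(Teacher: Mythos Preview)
Your proposal is correct and follows essentially the same route as the paper: view $\mA(t)=\mA_0+\tilde\mW(t)$ as a bounded time-dependent perturbation of the generator $\mA_0=\bar c\partial-\mathbf L_0$ of $(\mT_0(t))_{t\ge0}$, check $\tilde\mW(\cdot)\in C_b(\R,\mathcal L(\Lp^2))$, and invoke the Engel--Nagel perturbation result (the paper cites \cite{engel_one-parameter_2000}[corollary~VI.9.20]); your added details (the Dyson integral identity and the Gronwall estimate giving explicit $M_{\mathbf U}$ and $w$) are fine elaborations of that citation. One small correction: the lemma does not assume periodicity of $\tilde\mv$, so you should not appeal to compactness of $[0,T]$ for the uniform bound on $\tilde\mW$; instead observe that $\norm{\tilde\mW(t)}_{\mathcal L(\Lp^2)}\le \norm{\mathbf L_0}\,\norm{S'}_\infty\,\norm{\mW}_{\mathcal L(\Lp^2)}$ uniformly in $t$, which is all that is needed.
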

\begin{proof}
	The operator $\bar c\partial -\mathbf L_0$ with domain $\mathrm H^1$ generates an exponentially bounded evolution family on $\Lp^2$, it is actually $(\mathbf T_0(t-s))_{t\geq s}$ (see lemma~\ref{lem:T0}). The lemma follows from $\tilde\mW(\cdot)\in\mathcal C_b(\R,\mathcal L(\Lp^2))$ and \cite{engel_one-parameter_2000}[corollary VI.9.20].
\end{proof}

By uniqueness, we have that $\mathbf U(t+T,s+T) = \mathbf U(t,s)$. To this evolution family, we associate the \textit{monodromy operator} 
\[
\mathbf S := \mathbf U(T,0)\in\mathcal L(\Lp^2),
\]
which is a restriction of the \textit{period map} $\mathbf S(t):=\mathbf U(t+T,t)$. One can show that $\Sigma(\mathbf S(t))\setminus\{0\}$ is independent of $t$, see \cite{henry_geometric_1981}. The non-zero eigenvalues of $\mathbf S$ are called the \textit{Floquet multipliers} \cite{iooss_bifurcation_1979}.

\begin{remark}
One is often interested in the \textit{Floquet exponents} $\mathcal E$ which is the spectrum of 
$-\frac{d}{dt} + \mA(t)$  in $\Hp^1_{per}([0,T],\Lp^2)$. Using the evolution family $e^{\sigma(t-s)}\mU(t,s)$ associated with $\mA(t)+\sigma\id$, one has
\rem{On peut faire mieux, Koch-medina, Chicone}
\[
\{\exp\left(zT\right),\ z\in\Sigma_p\left(-\frac{d}{dt} + \mA(t)\right) \}=\Sigma_p(\mathbf S)\setminus\{0\}
\]
and so Floquet exponents and multipliers are one to one.
\end{remark}
We now characterize the spectrum of the monodromy operator.
\subsection{Proof of proposition~\ref{prop:stab-mtw}}
\begin{proof}
\textbf{Proof of 1.}
	Using \cite{engel_one-parameter_2000}[corollary VI.9.20], we find that
	\[
	\mathbf S = \mT_0(T)+\int_0^T\mT_0(T-s)\tilde\mW(s)\mU(s,0)ds
	\]
	where $\mT_0$ is the semigroup generated by $\bar c\partial -\mathbf L_0$.
	We consider the cutoff functions $\chi_n$ defined previously and define
	\[
	\mK_n:=\int_0^T\mT_0(T-s)\chi_n\tilde\mW(s)\mU(s,0)ds.
	\]
	From the proof of proposition~\ref{prop:linstabTW}, we know that $\chi_n\tilde\mW(s)$ is compact in $\Lp^2$ for all $s$. It follows that $\mT_0(T-s)\chi_n\tilde\mW(s)\mU(s,0)$ is also compact and so is $\mK_n$ as the set of compact operators is closed in $\mathcal L(\Lp^2)$.
	
	We now introduce
	\[
	\mB_n:=\int_0^T\mT_0(T-s)(1-\chi_n)\tilde\mW(s)\mU(s,0)ds
	\]
	whose norm can be made small as $\norm{\mB_n}\to 0$ as $n\to\infty$ thanks to the bounds \eqref{eq:bound-T0} and \eqref{eq:bound-U}. Thus, from lemma~\ref{lem:T0}, $\norm{\mT_0(T)}<1$, we can chose $n$ large enough so that $\norm{\mT_0(T)+\mB_n}<1$ ; the spectrum $\Sigma(\mT_0(T)+\mB_n)$ is strictly in the unit disk.
	We apply corollary~\ref{coro:SigTpK} to $(\mT_0(T)+\mB_n) + \mK_n$ thanks to $\Lp^2$ and $\Hp^1$ being separable Hilbert spaces \cite{brezis_functional_2010}[Theorem 4.13, Proposition 9.1]. It shows that $\Sigma(S(T))\setminus\Sigma(\mT_0(T)+\mB_n)$ is composed of eigenvalues with finite algebraic multiplicities. This concludes the first part of the proposition.
	
	\textbf{Proof of 2.}
	The MTW $\tilde\mv$ satisfies
	\[
	\frac{d}{dt}\tilde \mv = \bar c\partial\tilde\mv+\mF(\tilde\mv;p)
	\]
	which can be differentiated with respect to time if $\tilde\mv\in H^2_{per}([0,T], \Hp^1)$ to show that $\frac{d}{dt}\tilde\mv(t)$ is solution to \eqref{eq:linper} implying that $\frac{d}{dt}\tilde\mv(0)\in\ker\left(\mathbf S-\id\right)$. Similarly, one can show that $\partial\tilde\mv(t)$ is solution to \eqref{eq:linper} so that $\partial\tilde\mv(0)\in\ker\left(\mathbf S-\id\right)$. Assume that the kernel is one dimensional, that is there is $\alpha\neq 0$ such that $\frac{d}{dt}\tilde\mv(0) = \alpha\partial\tilde\mv(0)$. By uniqueness of the solution to \eqref{eq:linper}, we find that $\frac{d}{dt}\tilde\mv = \alpha\partial\tilde\mv$ hence $\tilde\mv(x,t) = h(x+\alpha t)$ for some function $h$ which is $\alpha T$ periodic. This concludes the second part of the proposition.
	
	\textbf{Proof of 3.}
	We note that if $t=n T +\tilde t$ with $0\leq \tilde t<T$, one has 
	\[
		\mU(t,s) = \mU(n T +\tilde t,s) = \mU(\tilde t, s)\mU(nT+s,s) = \mU(\tilde t,s)\mathbf S(s)^n
	\]
	which implies $\norm{\mU(t,s)\mv}_2\leq C\norm{\mathbf S(s)^n\mv}_2$ by \eqref{eq:bound-U}.
	The spectrum of \textit{period map} being independent of $s$, we conclude that $\mv$ does not have any component in $\mathbf S(s)$ generalized eigenspace $\mathbb E_1(s)$ for the eigenvalue $1$. 
%	We denote by $\mathbf P_1$ its spectral projector, hence $(I-\mathbf P_1)\mv = \mv$. 
	From \cite{iooss_bifurcation_1979}~technical lemma, we conclude that $\mathbf S(s)^n\mv\to 0$ and the conclusion follows.
\end{proof}

\section {Numerical methods}\label{sec:nm}
Owing to the number of space variables, we aim at performing the computations in a matrix-free setting freeing us from the limitation on the number of variables. In effect, for a space discretization of size $N$, the number of unknowns is $2N+1$ for a TW.

Note that \textit{pde2path} \cite{uecker_pde2path_2014} supports the computation of waves through the addition of constraints to the model formulation. However, the software lacks shooting capabilities and that is why we use \href{https://github.com/bifurcationkit/BifurcationKit.jl}{BifurcationKit.jl} \cite{veltz_bifurcationkitjl_2020} instead. The implemented methods require very little modification of this Julia package and will be added to the official package in the near future.

\subsection{Traveling waves}
The traveling waves are numerically approximated as zeros of \eqref{eq:wave} with Neumann boundary conditions for $\partial$ using a Krylov-Newton \cite{kelley_iterative_1995, rankin_continuation_2014} algorithm with preconditioner \cite{sanchez_computation_2013} $Pr:=(-I+c_{prec}\partial , -I/\tau+c_{prec}\partial)$ for $c_{prec}$ estimated with direct simulation. It is very cheap to cache the sparse LU decomposition of $Pr$ even for large discretizations. Without this preconditioner, the Krylov-Newton algorithm converges very slowly or does not converge at all. The iterative linear solver used for the Newton iterations is GMRES \cite{barrett_templates_1994}.

\subsection{Stability of traveling waves}
The linear stability of a TW $(\bar\mv,\bar c)$ is deduced from the computation of the spectrum of $\mathbf A/\bar c$ in \eqref{eq:oplinstab}. This is performed using a shift-invert method \cite{saad_numerical_2011} based on the preconditioner $Pr$ in order to extract the right-most eigenvalue. 
However, we found that the method sometimes misses eigenvalues with large imaginary parts stemming from the transport operator $\partial$, so we validate our results with the slower computation of the spectrum of $e^{t\mathbf A}$ (in effect the semigroup) for a fixed and small $t>0$, see \cite{kevrekidis_computational_2020}. In all cases, the iterative eigensolver is based on the Arnoldi algorithm \cite{saad_numerical_2011}.

Another method consists in computing the spectrum of $\mA_\mP$ based on lemma~\ref{lemma:sigmaAp}. In the context of bifurcation theory, this is really advantageous as it removes the $0$ eigenvalue associated with $\partial\bar\mv$ with is nonzero (numerically) although quite small in practice ($\approx 10^{-9}$).

\subsection{Numerical bifurcation analysis of traveling waves}

To study the dependence of the TW on a parameter $p$, we use
a pseudo-arclength continuation \cite{govaerts_numerical_2000} method which allows us to obtain curves of solutions $(\bar\mfu(s),\bar c(s),p(s))$, $s$ being the arclength.
The wave reference $\hat \mfu$ and the wave speed of the preconditioner $Pr$ is updated with the previously found wave solution.

The detection of bifurcation points in a continuation run is performed using the bisection algorithm implemented in \cite{veltz_bifurcationkitjl_2020}. The continuation of codim 1 bifurcation points (Fold / Hopf) in the wave frame is based on a minimally augmented formulation of \eqref{eq:pddae} with the diagonal mass matrix $\mM=diag(I_{2N},0)$. Finally, the codimension 2 bifurcations are computed along the Fold/Hopf curve based on theorem~\ref{thm:CM}.

\subsection{Direct simulations}
The computation of the solutions of the DAE \eqref{eq:pddae} and the ODE \eqref{eq:mtw_ode} is based on the adaptive and implicit time stepper \textrm{Rodas4P2} of the Julia package \href{https://github.com/SciML/DifferentialEquations.jl}{DifferentialEquations.jl} \cite{rackauckas2017differentialequations} using a Jacobian-free formulation with preconditioner $Pr$ updated at time steps for which the iterative solver GMRES has longer iterations than a prescribed number.

\subsection{Modulated traveling waves}\label{section:schemesmtp}

In this paragraph, we provide details for computing MTW using two general different methods, each one having its own pros and cons. We recall that MTW are periodic solutions with period $T$ of \eqref{eq:pddae} which we rewrite
\begin{equation}\tag{MTW-DAE}\label{eq:FDDAE}
	\left\{
	\begin{array}{ccl}
		\frac{d\mv}{dt} &=& \lambda(t)\partial\mv(t) + \mathbf F(\mv(t);p)\\
		0 &=&  \langle\mv(t)-\hat \mfu, \partial\hat \mfu \rangle_{L^2}\\
		0 &=&s(\mv(0),T)
	\end{array}\right.
\end{equation}
where $s(\mv(0),T)$ is a section to remove the time phase invariance. Analogously, the MTW can be found by solving the ODE
\begin{equation}\tag{MTW-ODE}\label{eq:FDODE}
	\left\{
	\begin{array}{ccl}
		\frac{d\mv}{dt} &=& \bar c\partial\mv(t) + \mathbf F(\mv(t);p)\\
		0 &=&s(\mv(0),T).
	\end{array}\right.
\end{equation}
These problems are solved using a time discretization yielding a large spatio-temporal problem or using shooting. The spatio-temporal formulation is the fastest to solve in our experiments but it is not adaptive and fails to capture orbits with large periods or stiff changes. The shooting method is slower but more robust and simpler to implement.

\subsection{Modulated traveling waves, finite differences}\label{section:boxscheme}
In this case, the box scheme  \cite{keller_accurate_1974,uecker_numerical_2021,knobloch_origin_2021}  based on a trapezoidal one is applied to find the solutions $(\mv(t_i), \lambda(t_i), T)$ of \eqref{eq:FDDAE}. More precisely, using a uniform mesh of length $M$ of the time interval $0=t_1<\cdots <t_M=1$, we discretize the first equation of \eqref{eq:FDDAE} as follows:
\begin{equation}\label{eq:boxscheme}
	\frac{T}{h}\left[\mv(t_i)-\mv(t_{i-1})\right] =\frac12\lambda(t_i)\partial\mv(t_i) +\frac12\lambda(t_{i-1})\partial\mv(t_{i-1})
	+\mF\left(\frac12\left[\mv(t_i)+\mv(t_{i-1})\right];p\right),
\end{equation}
where $1\leq i\leq M, h:=\frac{1}{M}$ and $\mv(t_1) := \mv(1)$.
Analogously, we denote by $(\mv(t_i), T, \bar c)$ the discretized solution to \eqref{eq:FDODE}. This gives a number  of unknowns of $3MN+1$ or $2NM+2$ depending on the formulation. In both cases, the solution to the linear problem associated with the Krylov-Newton algorithm is unlikely to converge without a preconditioner. 
When solving \eqref{eq:FDODE}, we use the precondionner $Pr_{ode}(c_{prec})$ where $c_{prec}$ is close to the speed $\bar c$ we are looking for. $Pr_{ode}(c_{prec})$ is defined as the linear operator obtained by replacing $\mF$ in \eqref{eq:boxscheme} by $-\mathbf L_0$ which makes it sparse.
When solving \eqref{eq:FDDAE}, the precondionner $Pr_{dae}(\tilde\mu(t_i))$ is built similarly with $\mF$ replaced by $(-\mathbf L_0,\id)$ where the identity operator is for the $\lambda$ component of \eqref{eq:FDDAE}. We found that these cheap preconditioners allows fast computation of the MTW. In practice, 1 Newton iteration takes roughly 1s for $M\approx 50$.

In this case, the stability of the MTW is assessed by the computation of the Floquet exponents.
Their computation is based on the Arnoldi algorithm \cite{saad_numerical_2011}.

\subsection{Modulated traveling waves, shooting}
The second way to find MTW is to apply a shooting method \cite{garcia_continuation_2016, garcia_comparison_2010} based on a preconditioned adaptive and implicit time stepper. More precisely, to solve \ref{eq:FDDAE}, we look for $(\mv, \lambda, T)$ where $\mv$ is a (several) point(s)  on the orbit. We note $\phi_{dae}^t(\mv,\lambda)$ the flow of \eqref{eq:pddae} and thus seek zeros of 
\begin{equation}\tag{SH-DAE}
	\label{eq:SHDAE}
	\left\{
	\begin{array}{ccl}
		0 &=& (\mv,\lambda)- \phi_{dae}^T(\mv,\lambda)\\
		0 &=&s(\mv,T). 
	\end{array}\right.
\end{equation}
Finally, one last method is to apply a standard shooting to the ODE \eqref{eq:FDODE} to find the zeros $(\mv,\bar c,T)$ of the functional:
\begin{equation}\tag{SH-ODE}
	\label{eq:SHODE}
	\left\{
	\begin{array}{ccl}
		0 &=& \mv- \phi^T(\mv;\bar c)\\
		0 &=&s(\mv,T),
	\end{array}\right.
\end{equation}
where the flow of \eqref{eq:mtw_ode} is written $\phi^t(\mv; \bar c)$.
In both cases,  the linear problem associated with the Krylov-Newton algorithm is solved using the iterative linear solver GMRES where the jacobian operator of the functional is evaluated with centered finite differences.

The computation of the Floquet multipliers associated to \eqref{eq:mtw_ode} is based on the Arnoldi algorithm \cite{saad_numerical_2011} applied to the monodromy operator along the MTW. 

\section{Numerical results}\label{section:simus}
For the applications, we assume the following values for the parameters if not specified otherwise: 
$\beta = 50.0,
a_{ee} = 1.0, a_{ei} = 1.0, 
a_{ie} = 1.3, a_{ii} = 0.25,
\theta_e = 0.12, \theta_i = 0.3, \sigma_e =  \sigma_i=10, \tau = 0.82$. We note that there are 3 equilibrium points for the clamped system for these parameters. We also assume that the kernels are gaussian:
\[
K_{p}(x):=\frac{1}{\sigma_p\sqrt{2\pi} } \exp \left(-\frac{x^2}{2\sigma_p^2}\right).
\]
We apply the tools developed above for the study of \eqref{eq:nfe}. We first simulate \eqref{eq:nfe} and \eqref{eq:pddae} in Figure~\ref{fig0}  to show the kind of TPs we are interested in. In the top row is featured a stable TP which is destabilized by decreasing the parameter $\tau$ (bottom row) into a \textit{modulated traveling pulse} (MTP). 

We then perform numerical continuation of the TP with respect to $\tau$ and find two Hopf bifurcations. This allows to perform continuation (see Figure~\ref{fig:codim2}) of the Hopf points in the plane $(\theta_i,\tau)$ where a Bogdanov-Takens (BT) bifurcation is detected at $(\theta_i,\tau)\approx (0.318583, 1.953818)$. This BT point has been refined using a minimally augmented formulation \cite{govaerts_computation_1993}. A curve a Fold points is then computed starting from the BT point. The branch of homoclinics orbits from the BT point has not been computed due to the lack of associated method in \href{https://github.com/bifurcationkit/BifurcationKit.jl}{BifurcationKit.jl} version 0.2. The Hopf branch seems to converge to a limit point where the TP converges to a traveling front. 

\begin{figure}[h!]
	\centering
	\includegraphics[width=0.8\textwidth]{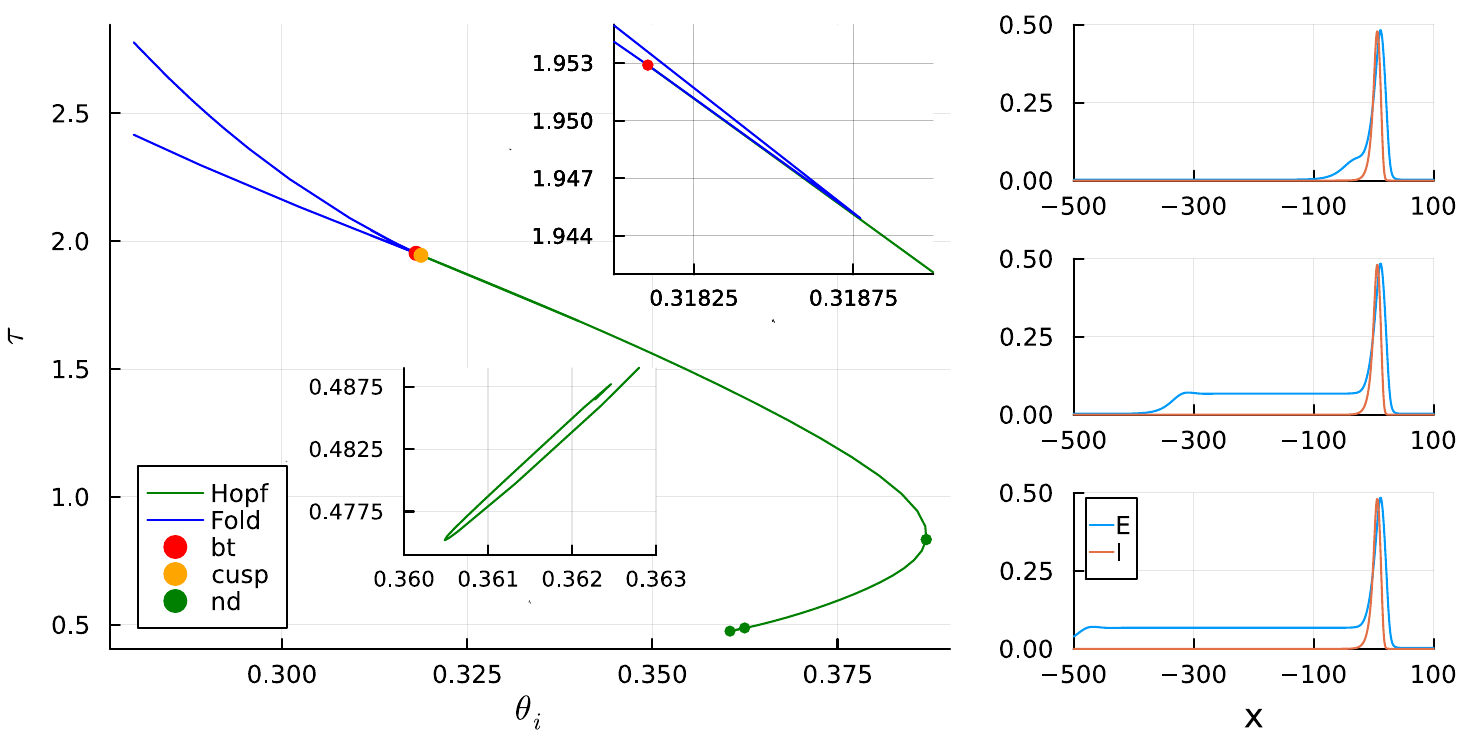}
	\caption{Left: continuation of Hopf/Folds of TPs in the plane $(\theta_i,\tau)$. The insets show a zoom near the BT point and the limit point on the Hopf branch. Legend: BT means Bogdanov-Takens, ND means Fold. Right: TP solutions near the limit point on the Hopf branch.}
		\label{fig:codim2}
\end{figure}

We next consider a Hopf bifurcation for a given $\theta_i$ and computes the curve of MTPs emanating from it as function of the parameter $\tau$. It seems numerically that there are two main regimes. When the parameters are close to the Fold of Hopf points (at $(\theta_i,\tau)\approx (0.386975,0.833440)$ in Figure~\ref{fig:codim2}), the branch of (stable) MTPs connects the two Hopf bifurcation points. This case is shown in Figure~\ref{fig:mtp1}.

\begin{figure}[ht!]
	\centering
	\includegraphics[width=0.9\textwidth]{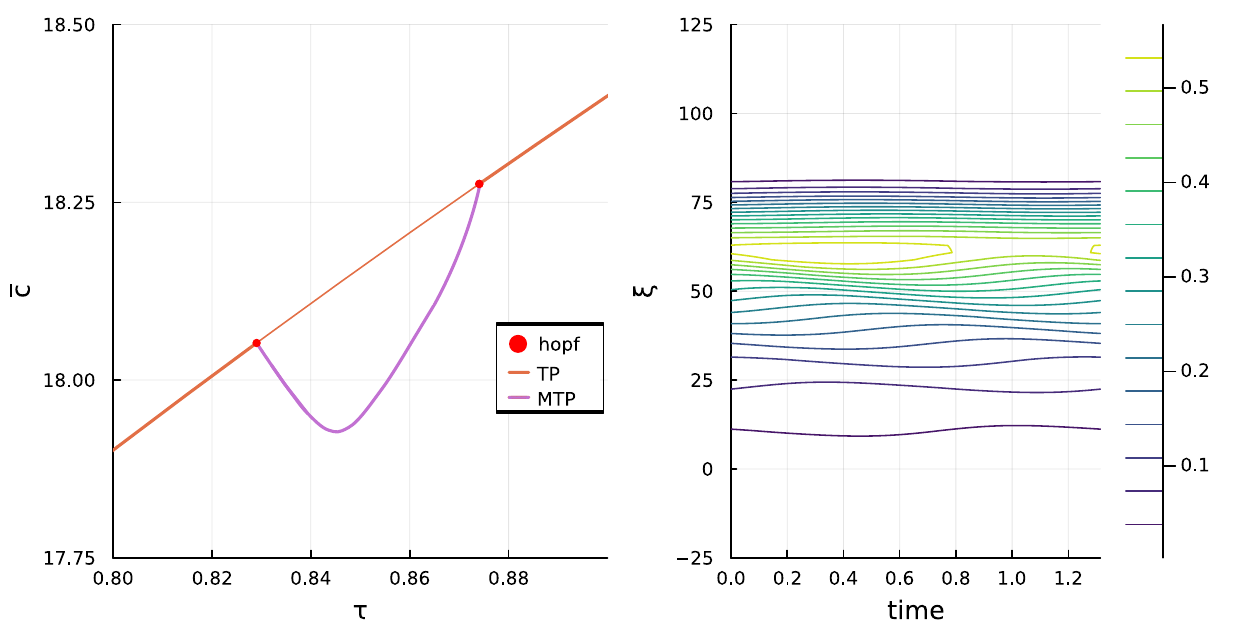}
	\caption{Left: curves of TPs and MTPs for $\theta_i = 0.3869$.  Right: example of MTP in wave frame for $\tau \approx 0.8268$. The branch of MTP was computed with a single standard shooting \eqref{eq:SHODE}. For the branch of MTPs, we plot the value of the speed $\bar c$ as expressed in \eqref{eq:SHODE}. The stable states are marked with a thick line and the unstable ones with a thin line.}
		\label{fig:mtp1}
\end{figure}

If we decrease a bit the threshold to $\theta_i=0.38$ as in Figure~\ref{fig:mtp2}, the connection between the two Hopf bifurcation points by the branch of MTPs opens up and it seems numerically that the branches of MTPs converges to limit points $LP_1, LP_2$. The period of the MTPs seems to converge as one approaches the points $LP_1$ whereas it seems to grow unbounded near $LP_2$ (the continuation failed near $LP_2$). The branch of MTPs on the left part of Figure~\ref{fig:mtp2} seems to converge to a modulated traveling front which connects two different values unlike the TP which connects two equal (small) values ; this is analyzed more in details here after.  The branch of MTPs on the right part of seems to converge to an orbit composed of two waves with different speeds as can be seen from the MTP profiles in wave frame on the right column of Figure~\ref{fig:mtp2}.

\begin{figure}[h!]
	\centering
	\includegraphics[width=0.9\textwidth]{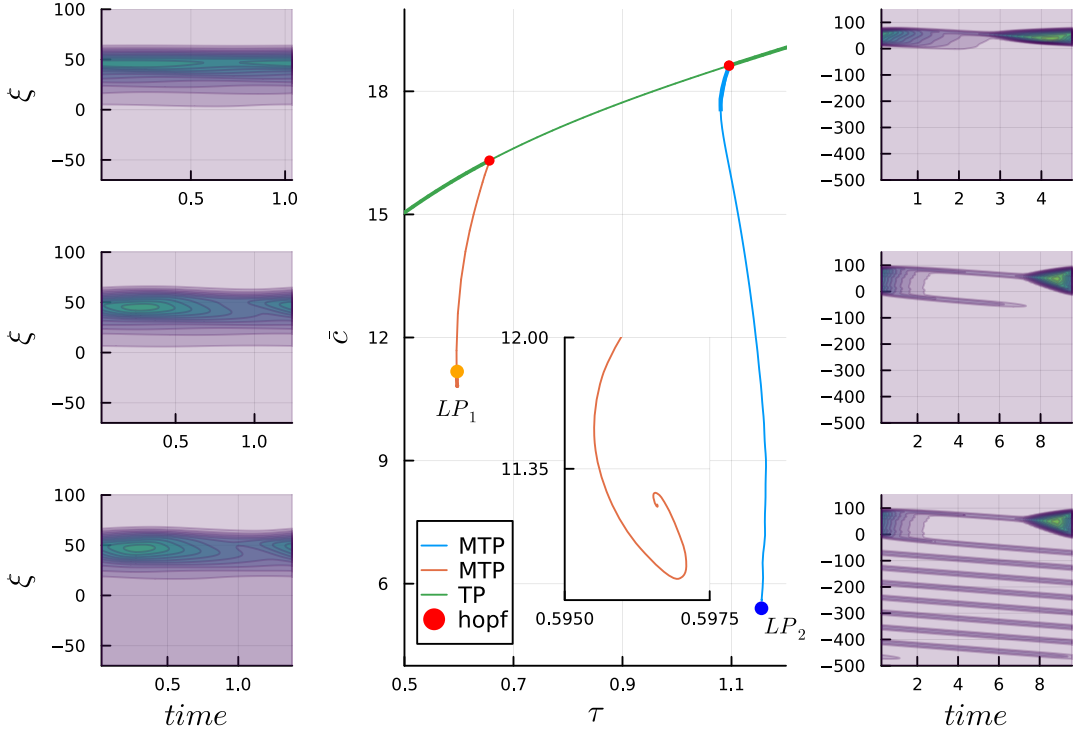}
	\caption{Middle: curves of TPs and MTPs for $\theta_i = 0.38$. Inset: zoom on the left MTP branch.  Left: example of MTPs on the left branch in the wave frame. Right: example of MTPs on the right branch in the wave frame. The MTP branches were computed with a single standard shooting \eqref{eq:SHODE}. For the branch of MTPs, we plot the value of the speed $\bar c$ as expressed in \eqref{eq:SHODE}. The stable states are marked with a thick line and the unstable ones with a thin line. The points $LP_1, LP_2$ marked with orange/blue dots represents limits points at which the continuation stopped.}
		\label{fig:mtp2}
\end{figure}
In Figure~\ref{fig:snakingL}, we show the profiles of the MTPs as they approach the point $LP_1$. This is an evidence of snaking behavior as is commonly found near homoclinic points. On Figure~\ref{fig:snakingL}~Right, an example of MTP is shown in wave frame which hints at how the tail is developing along the MTP branch, supporting the fact that the MTP seems to converge to a modulated traveling front.

\begin{figure}[htbp!]
	\centering
	\includegraphics[width=0.99\textwidth]{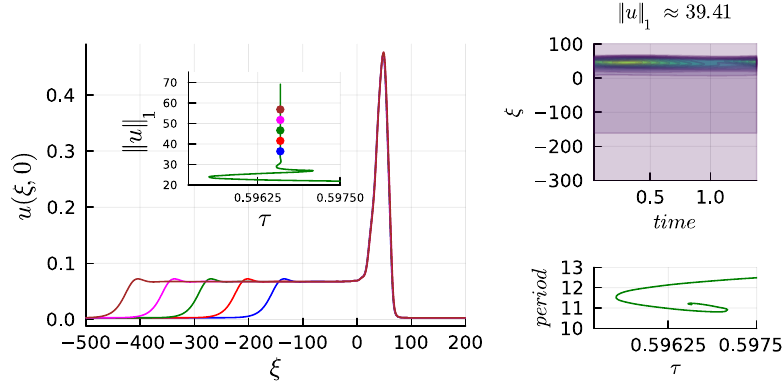}
	\caption{Snaking of MTPs for $\theta_i = 0.38$, left branch in Figure~\ref{fig:mtp2} near $HP_1$. Left: plot of one section for several MTPs near the point $HP_1$. The inset shows a zoom of the MTP branch near the point $HP_1$. The dots are Fold bifurcations points. Right: MTP profile for the blue mark on the inset figure. The $L^1$ norm is defined as $\norm{u}_1:=\frac{1}{T}\int_{0}^{T}\int_{\Omega}\ \lvert u(\xi,t)\rvert\ d\xi dt$.}
		\label{fig:snakingL}
\end{figure}

\section{Discussion}\label{section:discussion}
In this work, we focused on the theoretical / numerical analysis of the dynamics of TW and MTW in a two populations neural field model with one dimensional cortex. We proved a principle of linearised stability for TW, develop a bifurcation theory of TW and characterize the spectral properties associated to MTW. We based our analysis on the freezing method yielding a DAE which allows an efficient description of the dynamics. The only point left for future study is the proof of a principle of linearized stability for MTW. Compared to reaction-diffusion systems, the analysis of MTW for neural field  equations seems more challenging as the dynamics in the wave frame is not regularizing, \textit{i.e.} the linearized operator is not sectorial. This prevents us from defining a Poincar\'e return map for example.

Then, we presented numerical methods to compute MTW. This is where the freezing method really shines as it allows to simulate the wave profile for as long as we want, we are not limited by the simulation domain as in \cite{harris_traveling_2018} for example. We found these MTW from Hopf bifurcations of TP.
Interestingly, we numerically found Fold, Hopf and Bogdanov-Takens bifurcations of TP which were analytically studied in the case of a Heaviside nonlinearity in the very detailed work \cite{folias_traveling_2017}. The study of stationary traveling waves for different ``mean field'' of neural populations has recently been performed in \cite{laing_moving_2020,palkar_inhibitory_2022, byrne_mean-field_2022,laing_periodic_2023}. We are then able to follow the MTW as function of a parameter (in this case the time scale ratio $\tau$) and showed evidences for snaking of MTP.

We focused on single pulses and while our stability results do not depend on their specific shape, dedicated tools have been developed to  study the existence of multi-pulse solutions. This would be an interesting area of future research in the context of MTPs.

We discarded the presence of conduction delays in the model albeit they seem to be very important in the modeling of visual cortex \cite{muller_stimulus-evoked_2014}. In this context, our analysis does not hold and a lot remains to be done and a dedicated framework seems to be lacking in this case (see \cite{meijer_travelling_2014} for a numerical study in the context of neural fields).

The extension of the theory to two-dimensional domains is not completely direct and we already expect the loss of exponential stability of TP based on \cite{faye_multidimensional_2015}. The study of circular pulses seems of intermediate difficulty and of great interest given their relevance to visual cortex \cite{muller_stimulus-evoked_2014}.

Finally, the interplay between noise and wave dynamics is a fascinating topic \cite{kilpatrick_pulse_2014,kilpatrick_stochastic_2015, inglis_general_2016,lang_finite-size_2017}. A lot remains to be done in this context especially for MTW and 2d cortices.

\section{Acknowledgments}
R. Veltz thanks the support of the ANR project ChaMaNe (ANR-19-CE40-0024, CHAllenges in MAthematical
NEuroscience).

This work was also supported by the European Union’s Horizon 2020 Framework Programme for Research and Innovation under the Specific Grant Agreement No. 945539 (Human Brain Project SGA3). The Human Brain Project is a collaborative, interdisciplinary effort including groups from more than 20 countries.

\appendix
\section{Technical details}

All numerical computations were performed in the Julia programming language (version 1.8.2). Numerical bifurcation analysis was performed using \href{https://github.com/bifurcationkit/BifurcationKit.jl}{BifurcationKit.jl} (version 0.2.1).
Unless otherwise specified, all computations were made using $N = 2048$ points on $[-L,L]$ with $L =500$. We could have used $N\sim 10^6$ on a laptop for the analysis of TPs but the computation of MTPs would have required the use of dedicated hardware, \textit{e.g.} GPU and we did not find that increasing $N$ dramatically changed our numerical observations. Centered finite differences of order 2 were used for the spatial derivative $\partial$. The newton tolerance for all computation was set to $1e{-9}$ in supremum norm. The numerically stable computation of Floquet exponents using box scheme (section~\ref{section:boxscheme}) proved to be brittle. We therefore refrain from computing bifurcations from MTPs using finite differences, and instead check their stability using the shooting method.

\section{Proof of lemma~\ref{lemma:FCk}}\label{proof:lemma47}
\rem{Brezis. 
	
$\Hp^1$ est bien defini. Convolution $\rho\in L^1, v\in W^{1,p} $ alors $\rho\star v\in W^{1p}$, 
	
$L^1*L^p\subset L^p$, $L^p*L^q\subset L^r$

$L^1*W^{1,p}\subset W^{1,p}$ on page 211}
\textbf{Case $r=1$.}
We give the proof in the case of a single component and we assume that $\tau=1$ to simplify notations.
We first show that $ \tilde\mF(\mv;p)\in\Lp^2$ for $\mv\in\Lp^2$.	Because, $S$ is globally Lipschitz with constant $C_s$, one finds
\[
\forall x\in\R,\quad	\lvert S\left(\mW\cdot\bar\mv+\mW\cdot\mv\right)-S\left(\mW\cdot\bar\mv\right)\rvert(x) \leq C_s \lvert \mW\cdot\mv(x) \rvert.
\]
Since $\mW\in \mathrm L^1(\R, \mathcal M_2(\R))$, Young's inequality for convolution \cite{brezis_functional_2010} gives $\norm{\mW\cdot\mv}_2\leq \norm{\mW}_1\norm{\mv}_2$ and then (see \eqref{eq:tildeF})
\[
\norm{\tilde\mF(\mv)}_2\leq C\norm{\mW}_1\norm{\mv}_2
\]
for some $C>0$.
Using a Taylor expansion of $S$ at order 1, one has for $\mv,\mh\in\Lp^2$:
\begin{multline}\label{eq:diff-O2}
	S(\mW\cdot(\bar\mv+\mv+\mh)) = S(\mW\cdot(\bar\mv+\mv)) + S'\left(\mW\cdot(\bar\mv+\mv)\right)\mW\cdot\mh \\ + \frac12\int_0^1(1-s)S^{(2)}\left(\mW\cdot\bar\mv+\mW\cdot\mv+s\mW\cdot\mh\right)\cdot\left(\mW\cdot\mh\right)^2ds.
\end{multline}
This provides a candidate for the differential of $\tilde\mF$ at $\mv$:
\[
d\tilde\mF(\mv;p)\cdot\mh = -\mathbf L_0\mh + S'\left(\mW\cdot(\bar\mv+\mv)\right)\mW\cdot\mh.
\]
$d\tilde\mF(\mv;p)$ is a continuous operator $d\tilde\mF(\mv;p)\in\mathcal L(\Lp^2)$ using the boundedness of $S$. It remains to show that the reminder in \eqref{eq:diff-O2} is $o(\mh)$. Using the hypothesis $\mW\in \mathrm \Wp(\R,\mathcal M_2(\R))$, we find that $\mW\cdot\mh\in W^{1,2}$ using Young's inequality for convolution and
\[
\norm{\mW\cdot\mh}_{W^{1,2}}\leq \norm{\mW}_{\mathrm W^{1,1}}\norm{\mh}_2.
\]
$\rm H^1 =\rm W^{1,2}$ being a Banach algebra, there is $C>0$ such that $\norm{\mfu^2}_{W^{1,2}}\leq C\norm{\mfu}^2_{W^{1,2}}$ for all $\mfu\in W^{1,2}$. This implies that
\[
\norm{(\mW\cdot\mh)^2}_2\leq 
\norm{(\mW\cdot\mh)^2}_{\Hp^1}\leq 
C\norm{\mW\cdot\mh}^2_{\Hp^1}\leq 
C \norm{\mW}_{\mathrm W^{1,1}}^2\norm{\mh}_2^2.
\]
Using the boundedness of $S^{(2)}$, this implies that the reminder is $o(\mh)$ in $\Lp^2$. This concludes the proof that $\tilde\mF\in C^1(\Lp^2,\Lp^2)$.

\textbf{General $r$ case.} It is then straightforward to conclude that $\tilde\mF\in C^r(\Lp^2,\Lp^2)$ using the same method and relying on the fact that $\rm H^1$ is a Banach algebra.
\section{Functional analysis}
We quote the result  \cite{reed_functional_1980}[Theorem VI.14].
\begin{theorem}[Analytic Fredholm theorem]
	Let $D$ be an open connected subset of $\mathbb C$ and $\mathcal H$ a separable Hilbert space. Let $\mK: D \to\mathcal L(\mathcal H)$ be an analytic operator-valued function such that $\mK(z)$ is compact for each $z\in D$. Then, either
	\begin{itemize}
		\item $(I-\mK(z))^{-1}$ exists for no $z\in D$
		\item $(I-\mK(z))^{-1}$ exists for all $z\in D\setminus S$ where $S$ is a discrete subset of $D$
		(\textit{i.e.} a set which has no limit points in $D$). In this case, $(I-\mK(z))^{-1}$ is meromorphic in $D$, analytic in $D\setminus S$, the residues at the poles are finite rank operators, and if $z\in S$ then $\mK(z)\mv=\mv$ has a nonzero solution in $\mathcal H$.
	\end{itemize}
\end{theorem}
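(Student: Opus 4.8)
The statement is the classical analytic Fredholm theorem of Reed--Simon, quoted here for later use; the plan is to reduce, locally in $z$, the invertibility of $I-\mK(z)$ to the non-vanishing of a scalar analytic determinant, and then glue the local pictures together using the connectedness of $D$.

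First I would fix an arbitrary $z_0\in D$. Since $\mK(z_0)$ is compact on the separable Hilbert space $\mathcal H$, it is a norm limit of finite-rank operators, so there is a finite-rank $F=\sum_{j=1}^N\langle\cdot,\phi_j\rangle\psi_j$ with $\|\mK(z_0)-F\|<\tfrac12$; by continuity of $z\mapsto\mK(z)$ there is a connected neighborhood $D_0\ni z_0$ on which $\|\mK(z)-F\|<\tfrac12$. On $D_0$ the operator $I-(\mK(z)-F)$ is boundedly invertible with analytic inverse (Neumann series), and the factorization
\[
I-\mK(z)=\bigl(I-(\mK(z)-F)\bigr)\bigl(I-G(z)\bigr),\qquad G(z):=\bigl(I-(\mK(z)-F)\bigr)^{-1}F,
\]
shows that $I-\mK(z)$ is invertible on $D_0$ iff $I-G(z)$ is, with $G(z)$ finite rank and analytic. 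Writing $G(z)v=\sum_{j=1}^N\langle v,\phi_j\rangle\,\eta_j(z)$ with $\eta_j(z):=(I-(\mK(z)-F))^{-1}\psi_j$ analytic, the equation $(I-G(z))v=w$ reduces, upon pairing with each $\phi_k$, to the finite linear system $(I-M(z))c=b$ for $c=(\langle v,\phi_j\rangle)_j$, where $M(z)_{kj}=\langle\eta_j(z),\phi_k\rangle$ is an analytic $N\times N$ matrix and $b=(\langle w,\phi_k\rangle)_k$. Hence on $D_0$: $I-\mK(z)$ is invertible exactly where the scalar analytic function $d(z):=\det(I-M(z))$ does not vanish; by Cramer's rule $(I-G(z))^{-1}$, and therefore $(I-\mK(z))^{-1}$, is meromorphic on $D_0$ with poles only at the zeros of $d$ and residues of finite rank (they factor through $\operatorname{span}\{\eta_1(z),\dots,\eta_N(z)\}$ and its derivatives at the pole); and if $d(z_*)=0$ then a nonzero null vector $c$ of $I-M(z_*)$ yields $v:=\sum_j c_j\eta_j(z_*)\neq0$ with $(I-G(z_*))v=0$, whence $\mK(z_*)v=v$ by the factorization.

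Then I would globalize. Each $z_0$ thus lies in exactly one of two open sets: $V$, where $d\equiv0$ on the associated $D_0$ so $(I-\mK(z))^{-1}$ exists nowhere nearby; or $U$, where $d\not\equiv0$ so $\{d=0\}$ is discrete in $D_0$. These are disjoint (a point of $U$ has invertibility at arbitrarily nearby points, a point of $V$ does not) and cover $D$, so connectedness forces $D=V$ (the first alternative of the statement) or $D=U$. In the latter case $S:=\{z:I-\mK(z)\text{ not invertible}\}$ is locally finite and relatively closed in $D$, hence a discrete subset with no limit point in $D$, and the local meromorphic expressions for $(I-\mK(z))^{-1}$ patch together --- they all coincide with $(I-\mK(z))^{-1}$ off a discrete set, so by the identity theorem for meromorphic functions they agree on overlaps --- yielding an extension that is analytic on $D\setminus S$, meromorphic on $D$ with finite-rank residues at the poles, and such that $\mK(z)v=v$ has a nonzero solution for each $z\in S$. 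The points requiring care are combinatorial rather than analytic: checking that the two local alternatives are mutually exclusive so the connectedness dichotomy applies, and confirming the local extensions agree on overlaps; separability of $\mathcal H$ enters only to justify approximating $\mK(z_0)$ in operator norm by finite-rank operators, and everything else is elementary linear algebra together with the Neumann series.
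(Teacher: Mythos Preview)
The paper does not prove this theorem at all: it is quoted verbatim from Reed--Simon \cite{reed_functional_1980}[Theorem~VI.14] as a tool to derive Corollary~\ref{coro:SigTpK}. There is therefore no ``paper's own proof'' to compare against.

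That said, your argument is correct and is essentially the standard proof given in Reed--Simon: the local reduction to a finite-rank perturbation via the factorization $I-\mK(z)=(I-(\mK(z)-F))(I-G(z))$, the passage to a scalar analytic determinant $d(z)=\det(I-M(z))$, and the globalization by connectedness are exactly the steps in that reference. One small inaccuracy worth flagging: you write that separability of $\mathcal H$ ``enters only to justify approximating $\mK(z_0)$ in operator norm by finite-rank operators'', but every Hilbert space (separable or not) has the approximation property, so compact operators are always norm limits of finite-rank ones. Separability is thus not genuinely used in your argument, and indeed the theorem holds in arbitrary Hilbert spaces.
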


\begin{corollary}\label{coro:SigTpK}
	Let $\mA = \mA_0+\mB\in \mathcal L(\mathcal H)$ be a bounded linear operator in a separable Hilbert space $\mathcal H$ with $\mA_0\in\mathcal L(\mathcal H)$ and $\mB\in\mathcal L(\mathcal H)$ compact. Then $\Sigma(\mA)\setminus\Sigma(\mA_0)$ is composed of a discrete set of eigenvalues with finite algebraic multiplicity (\textit{i.e.} the corresponding generalized eigenspace is finite dimensional).
\end{corollary}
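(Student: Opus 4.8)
The plan is to reduce the statement to the analytic Fredholm theorem quoted above by a standard factorization of $z\id-\mA$, and then to transfer the conclusions back to $\mA$ through the resolvent identity. Set $\mK(z):=R(z,\mA_0)\mB$ for $z\in D:=\mathbb C\setminus\Sigma(\mA_0)$. Since $z\mapsto R(z,\mA_0)$ is analytic on $D$ and $\mB$ is fixed, $\mK(\cdot)$ is an analytic $\mathcal L(\mathcal H)$-valued map on $D$, and each $\mK(z)$ is compact as the product of a bounded operator and a compact one. On $D$ one has the factorization
\[
z\id-\mA=(z\id-\mA_0)\bigl(\id-\mK(z)\bigr),
\]
so that, for $z\in D$, one has $z\in\rho(\mA)$ if and only if $\id-\mK(z)$ is invertible, and in that case $R(z,\mA)=(\id-\mK(z))^{-1}R(z,\mA_0)$. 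In particular $\Sigma(\mA)\setminus\Sigma(\mA_0)=\{z\in D:\ 1\in\Sigma(\mK(z))\}$.

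Next I would apply the analytic Fredholm theorem on each connected component of $D$. On the unbounded component the first alternative is impossible: $\Sigma(\mA)$ is bounded, hence for $\norm{z}>\norm{\mA}$ one has $z\in\rho(\mA)$, so $\id-\mK(z)$ is invertible there. Thus $(\id-\mK(z))^{-1}$ exists off a set $S$ with no accumulation point in $D$, and at each $z_0\in S$ the equation $\mK(z_0)\mv=\mv$ has a nonzero solution, i.e. $\ker(z_0\id-\mA)=\ker(\id-\mK(z_0))\neq\{0\}$, so $z_0$ is an eigenvalue of $\mA$. On a bounded component of $D$ one argues in the same way once that component is seen to meet $\rho(\mA)$; in the situations where the corollary is used, $D$ is connected and coincides with its unbounded component, so this case does not arise. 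Hence $\Sigma(\mA)\setminus\Sigma(\mA_0)$ has no accumulation point in $D$ and consists of eigenvalues.

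For finite algebraic multiplicity I would use that, near each pole $z_0\in S$, the principal part of the Laurent expansion of $(\id-\mK(z))^{-1}$ is a finite-rank operator (this is the standard refinement of the analytic Fredholm theorem; the quoted statement already gives finite-rank residues). Multiplying by the holomorphic factor $R(z,\mA_0)$ shows that the principal part of $R(\cdot,\mA)$ at $z_0$ is also finite rank; in particular $z_0$ is an isolated point of $\Sigma(\mA)$ which is a pole of the resolvent, and the Riesz spectral projection
\[
P_{z_0}=\frac{1}{2\pi i}\oint_{\norm{z-z_0}=\varepsilon}R(z,\mA)\,dz
\]
equals the residue of $R(\cdot,\mA)$ at $z_0$, hence is finite rank. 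Therefore the generalized eigenspace $\mathcal R(P_{z_0})$ is finite-dimensional, which is exactly the assertion that $z_0$ has finite algebraic multiplicity. Collecting the two parts gives the claim.

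The main obstacle is this last transfer step: the analytic Fredholm theorem controls $\id-\mK(z)$, not $\mA$ itself, so one must use the resolvent factorization to carry the discreteness and the finite-rank principal-part information back to $R(\cdot,\mA)$ and then read off the algebraic multiplicity from the Riesz projection; a secondary point, easily dispatched using the boundedness of $\Sigma(\mA)$, is to exclude the ``nowhere invertible'' alternative of the dichotomy on the relevant component of $\rho(\mA_0)$.
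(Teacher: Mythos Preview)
Your proposal is correct and follows essentially the same route as the paper: define $\mK(z)=R(z,\mA_0)\mB$, apply the analytic Fredholm theorem, and rule out the first alternative by looking at large $|z|$. Your treatment is in fact more careful than the paper's on two points---you address the connectedness hypothesis on $D$ explicitly, and you justify \emph{finite algebraic multiplicity} via the finite-rank principal part and the Riesz projection, whereas the paper's one-line appeal to compactness of $\mK(z)$ strictly speaking only yields finite geometric multiplicity.
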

\begin{proof}
	Let $\mK(z) = (zI-\mA_0)^{-1}\mB$. Then $\mK(z)$ is an analytical compact operator valued function on the resolvent set $D=\rho(\mA_0)$. 	From $\lim\limits_{z\to\infty}\mK(z)=I$, the first alternative of the analytic Fredholm theorem is not verified. Thus, there is a discrete set $S$ so that $I-\mK(z)$ is invertible for $z\in D\setminus S$ and if $z\in S$, $\mK(z)\mv=\mv$ has a non zero solution in $\mathcal H$. Thus $\mv$ is an eigenvector of $\mA$ for the eigenvalue $z$. The fact that such eigenvalues $z$ have finite multiplicity follows
	immediately from the compactness of $\mK(z)$.
\end{proof}

\begin{lemma}\label{lem-pv}
	For $v\in \rm H^1$, then $\int \mv\partial \mv = 0$.
\end{lemma}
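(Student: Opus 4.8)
The plan is to reduce to the scalar, smooth, compactly supported case and pass to the limit. Writing $\mv=(v_1,v_2)$, one has $\int_\R \mv\,\partial\mv = \sum_{k=1}^2\int_\R v_k\,\partial v_k$, so it suffices to show $\int_\R v\,\partial v=0$ for a single scalar $v\in \Hp^1(\R,\R)$. The underlying identity is $v\,\partial v=\tfrac12\,\partial(v^2)$ in the sense of distributions, so the claim amounts to showing that $\int_\R \partial(v^2)=0$, i.e. that the ``boundary terms'' at $\pm\infty$ vanish.

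To make this rigorous I would use density: choose $v_n\in C_c^\infty(\R)$ with $v_n\to v$ in $\Hp^1(\R,\R)$, which is possible since smooth compactly supported functions are dense in $\Hp^1$ \cite{brezis_functional_2010}. For each $n$ the computation is immediate: integrating by parts (with no boundary contribution, as $v_n$ has compact support) gives $\int_\R v_n\,\partial v_n=\tfrac12\int_\R \partial(v_n^2)=0$. Since $v_n\to v$ in $\Lp^2$ and $\partial v_n\to\partial v$ in $\Lp^2$, and the bilinear map $(a,b)\mapsto\langle a,b\rangle_2$ is continuous on $\Lp^2\times\Lp^2$, we get $\int_\R v_n\,\partial v_n\to\int_\R v\,\partial v$, whence $\int_\R v\,\partial v=0$. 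Summing over the two components finishes the proof.

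Alternatively, one can argue directly without approximation: by the Sobolev embedding $\Hp^1(\R,\R)\hookrightarrow C_0(\R)$, $v$ has a continuous representative that tends to $0$ at $\pm\infty$, and $v^2\in\Wp(\R,\R)$ with weak derivative $2v\,\partial v\in\Lp^1$ (a product of two $\Lp^2$ functions); a $\Wp$ function on the line admits an absolutely continuous representative possessing limits at $\pm\infty$, which here must both be $0$, so $\int_\R\partial(v^2)=0$. There is no genuine obstacle in this lemma: the only point deserving care is the vanishing of the boundary terms at infinity, which is exactly what the $\Hp^1$ (equivalently $\Wp$) regularity supplies; I would present the density argument, as it is the shortest and avoids invoking the structure of $\Wp$ functions on $\R$.
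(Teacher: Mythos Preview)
Your proof is correct. The paper's one-line argument is exactly your ``alternative'' route: it writes $\int v\,\partial v=\tfrac12\int\partial(v^2)=\tfrac12[v^2]_{-\infty}^{\infty}=0$, relying implicitly on the $\Hp^1\hookrightarrow C_0(\R)$ embedding to kill the boundary terms. Your preferred density argument is a slightly different path---it bypasses any appeal to pointwise decay at infinity and instead uses only the continuity of the $\Lp^2$ inner product under $\Hp^1$-convergence---which is arguably more self-contained, though both are entirely standard and equally short.
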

\begin{proof}
	For simplicity, we only do the case where $\mv(x)$ has a single complex component.
	This is just $\int\partial(\mv^2)=[\mv^2]_{-\infty}^\infty=0$.
\end{proof}

%% The Appendices part is started with the command \appendix;
%% appendix sections are then done as normal sections
%\appendix
%\section{Example Appendix Section}
%\label{app1}
%
%Appendix text.
%
%%% For citations use: 
%%%       \cite{<label>} ==> [1]
%
%%%
%Example citation, See \cite{lamport94}.

%% If you have bib database file and want bibtex to generate the
%% bibitems, please use
%%
%%  \bibliographystyle{elsarticle-num} 
%%  \bibliography{<your bibdatabase>}

%% else use the following coding to input the bibitems directly in the
%% TeX file.

%% Refer following link for more details about bibliography and citations.
%% https://en.wikibooks.org/wiki/LaTeX/Bibliography_Management

%\begin{thebibliography}{00}
%
%%% For numbered reference style
%%% \bibitem{label}
%%% Text of bibliographic item
%
%\bibitem{lamport94}
%  Leslie Lamport,
%  \textit{\LaTeX: a document preparation system},
%  Addison Wesley, Massachusetts,
%  2nd edition,
%  1994.
%
%\end{thebibliography}
%\bibliographystyle{elsarticle-num} 
\bibliographystyle{alphadin}
\bibliography{references}
\end{document}